\setlist[itemize]{align=parleft,left=2pt..1em}
\def\argmin{\mathop{\rm argmin}}
\newtheorem{Def}{Definition}[section]
\newtheorem{theorem}[Def]{Theorem}
\newtheorem{remark}[Def]{Remark}
\newtheorem{assumption}[Def]{Assumption}
\theoremstyle{definition}
\title{A forward scheme with machine learning for forward-backward SDEs with jumps by decoupling jumps
\footnotetext[0]{
This work was partially supported by JSPS Grants-in-Aid for Scientific Research (Grant Numbers 20K22301 and 21K03347) and by JST PRESTO (Grant Number JPMJPR2029).}}
\author{\sc Reiichiro Kawai\footnote{Corresponding author. Email address: raykawai@g.ecc.u-tokyo.ac.jp. 
Affiliations: Graduate School of Arts and Sciences / Mathematics and Informatics Center, 
The University of Tokyo.},\, Riu Naito\footnote{Email address: riunaito@eco.u-toyama.ac.jp. Affiliations: Toyama University.}\, and Toshihiro Yamada\footnote{Email address: toshihiro.yamada@r.hit-u.ac.jp. Affiliations: Hitotsubashi University.}}
\date{}
\begin{document}
\maketitle

\begin{abstract}
\noindent 
Forward-backward stochastic differential equations (FBSDEs) have been generalized by introducing jumps for better capturing random phenomena, while the resulting FBSDEs are far more intricate than the standard one from every perspective.
In this work, we establish a forward scheme for potentially high-dimensional FBSDEs with jumps, taking a similar approach to [Bender and Denk, 117 (2007), {\it Stoch. Process. Their Appl.}, pp.1793-1812], with the aid of machine learning techniques for implementation.
The developed forward scheme is built upon a recursive representation that decouples random jumps at every step and converges exponentially fast to the original FBSDE with jumps, often requiring only a few iterations to achieve sufficient accuracy, along with the error bound vanishing for lower jump intensities.
The established framework also holds novelty in its neural network-based implementation of a wide class of forward schemes for FBSDEs, notably whether with or without jumps.
We provide an extensive collection of numerical results, showcasing the effectiveness of the proposed recursion and its corresponding forward scheme in approximating high-dimensional FBSDEs with jumps (up to 100-dimension) without directly handling the random jumps.

\vspace{0.2em}
\noindent {\it Keywords:} high-dimensional FBSDEs with jumps; partial integro-differential equations; forward scheme; machine learning; neural networks.

\noindent {\it 2020 Mathematics Subject Classification:} 60H30, 60G55, 35R09, 65C30, 68T07.
\end{abstract}

\section{Introduction}

To address problems in stochastic control, finance, and partial differential equations with more realistic features, forward-backward stochastic differential equations (FBSDEs) often require additional structures.
These extensions encompass jumps, switching, and reflection to better encapsulate realistic phenomena.
Nevertheless, FBSDEs endowed with such intricate features pose complexities beyond those of standard FBSDEs.
Evidently, all theoretical proofs and numerical approximation need to address irregularities within the underlying trajectories and law through more intricate procedures and restrictive conditions.

Among those additional features, the random jumps have often been incorporated into FBSDEs to literally allow the underlying trajectories to jump \cite{doi:10.1080/17442508.2018.1521808, doi:10.1080/17442508.2015.1090990}, for instance, as asset price dynamics do on a regular basis.
The associated semilinear parabolic partial integro-differential equation (PIDE) carries an additional integral term on top of the standard differential ones, reflecting the presence of additional random jumps.
Given the inherent challenges in handling standard FBSDEs, numerical approximation of FBSDEs with jumps naturally escalates in difficulty \cite{10.1214/23-PS18}.

To address difficulties arising from additional random jumps, a limited array of numerical methods has emerged in the literature.
These methods include discretization schemes tailored for treating jumps in FBSDEs, on the basis of Malliavin calculus \cite{Mall_Calc1}, by an explicit prediction-correction scheme \cite{EAJAM-6-253}, and via the shot noise representation of the Poisson random measure \cite{Massing2023}.
Other approaches incorporate forward schemes, built on Wiener chaos expansion and Picard iteration \cite{GEISS20162123}. 
A FBSDE driven by L\'evy processes is treated numerically with its small jumps approximated by the Brownian motion \cite{Fancy_BSDEs} and with 
Brownian and Poissonian components independently approximated by random walks \cite{lejay:inria-00357992}.
A Fourier-based method
is applied to utility indifference pricing problems formulated by FBSDEs with jumps \cite{doi:10.1137/130913183}.
Non-Poissonian local jump measures have also been investigated, such as a Fourier-based method \cite{doi:10.1137/16M1099005} and asymptotic expansions \cite{doi:10.1080/17442508.2018.1521808}.
Doubly reflected FBSDEs with jumps are numerically addressed with \cite{DUMITRESCU2016206} and without \cite{DUMITRESCU2016827} penalization.

In addition, in recent years, machine learning techniques have frequently demonstrated their effectiveness in the context of FBSDEs with jumps, particularly in high-dimensional settings, becoming increasingly predominant in the literature.
For instance, the deep BSDE solver \cite{Han8505}, further investigated in \cite{doi:10.1137/22M1478057, Han_2020}, has been tailored to tackle FBSDEs with jumps \cite{https://doi.org/10.48550/arxiv.2211.04349}.
Neural networks have often been applied in relevant contexts, such as approximating PIDEs with applications in insurance mathematics \cite{10.1007/978-3-030-99638-3_44} and solving high-dimensional optimal stopping problems with jumps in the
 energy market \cite{bayraktar2023neural}.
Most recently, a temporal difference learning method has been proposed for PIDEs, also built upon neural networks \cite{lu2023temporal}.

The aim of the present work is to establish a forward scheme for potentially high-dimensional FBSDEs with jumps, inspired by the forward scheme \cite{Picard_it1} proposed for standard FBSDEs (without jumps), following a similar approach to the existing decoupling principles for jump-diffusion processes \cite{doi:10.1080/17442508.2023.2259534} and regime switching \cite{QIU2022109301, regimebounds}.
The significance of the proposed forward scheme lies in the fact that each and every step of its recursion can be represented in terms of a standard parabolic linear partial differential equation (PDE, not PIDE), corresponding to diffusion processes without jumps.
This is achieved through a three-step process; first, by transforming the associated PIDE to a nonlinear PDE by decoupling random jumps (Theorem \ref{theorem Ytilde}); and second, by linearizing the resulting nonlinear PDEs through recursion (Theorem \ref{theorem section 3.2}); and finally, by constructing a forward scheme through discretization of the forward process that appears in the probabilistic representation of the recursion (Section \ref{subsection discretized recursion}).

The absence of random jumps or, equivalently, the corresponding integral terms of PIDEs in the final numerical procedure sets the proposed forward scheme far apart from the aforementioned existing methods for FBSDEs with jumps.
As a significant consequence, a variety of existing numerical methods can be employed in the established forward scheme as appropriate without directly handling the random jumps, such as approximation based on (semi-)analytic solutions, the iid Monte Carlo method, 
and machine learning methods, such as
the least squares Monte Carlo method (Section \ref{subsection LSMC}), and a neural network-based scheme (Section \ref{subsection neural network}).
In addition, the developed forward scheme approximates the original FBSDE with jumps exponentially fast through recursion along with the error bound that vanishes for lower jump intensities (Theorem \ref{thm1}).
The forward scheme here thus holds great potential for easing the numerical approximation of the realistic yet intricate FBSDEs with jumps in high-dimensional settings. 
Last but not least, the established framework also introduces intriguing novelty through its neural network-based implementation (Section \ref{subsection neural network}), accommodating a diverse range of forward schemes for FBSDEs, not only with jumps but remarkably also without jumps.

The rest of this paper is set out as follows.
In Section \ref{section problem formulation}, after defining the general notation, we formulate FBSDEs with jumps along with technical conditions and present their associated PIDEs.
In Section \ref{section recursive representation}, we decouple jumps from FBSDEs with jumps (Section \ref{subsection decoupling jumps}), and then develop a recursion by linear PDEs (Section \ref{subsection recursive representation}), supported by a theoretical analysis of convergence and error (Section \ref{subsection convergence and error analysis}).
In Section \ref{section forward scheme}, we construct a forward scheme so that the recursion established in Section \ref{section recursive representation} can be put into implementation by machine learning methods, along with illustrations (Figures \ref{Fig_Algo_Fwd_full}, \ref{Fig_Algo_Fwd_loc} and \ref{Fig_Algo_Bwd_loc}) provided to sketch its flow and contrast it with a typical backward scheme.
We present in Section \ref{section numerical illustrations} an extensive collection of numerical results to demonstrate the effectiveness of the proposed recursion and its corresponding forward scheme.
To maintain the flow of the paper, we collect proofs in Section \ref{section proofs} and the pseudocodes in the supplementary materials.

\section{Problem formulation}
\label{section problem formulation}

We first prepare the notation that will be used throughout the paper and introduce the underlying stochastic process.
We denote the set of real numbers by $\mathbb{R}$, the set of natural numbers (without zero) by $\mathbb{N}:= \{1,2,\cdots\}$, and then $\mathbb{N}_0:=\mathbb{N}\cup \{0\}$ (including zero).
We denote by $\mathbb{R}^d$ and $|\cdot|$, respectively, the $d$-dimensional Euclidean space and norm, with $\mathbb{R}^d_0:=\mathbb{R}^d\setminus \{0\}$.
We write $\mathbb{I}_d$ for the identity matrix in $\mathbb{R}^{d\times d}$, $\mathbbm{1}_d$ for the vector in $\mathbb{R}^d$ with all unit-valued components, and $0_d$ for the zero vector in $\mathbb{R}^d$.
The scalar one and zero are simply denoted by $1$ and $0$, respectively.
As usual, we write $(\cdot)^+:=\max\{\cdot,0\}$ and ${\rm diag}({\bf x}):={\rm diag}(x_1,\cdots,x_d)$ for ${\bf x}=(x_1,\cdots,x_d)^{\top}$.
Throughout, we reserve $T$ for the terminal time of the interval of interest, that is, $[0,T]$.
Let $(\Omega,{\cal F},({\cal F}_s)_{s\in [0,T]}, \mathbb{P})$ be a filtered probability space, where $\textstyle{{\cal F}_{s+}:=\bigcap_{\epsilon>0}{\cal F}_{s+\epsilon}={\cal F}_s}$ for all $s>0$ and ${\cal F}_0$ contains null set.

The stochastic system of our interest is the forward-backward stochastic differential equation (FBSDE) with jumps, formulated as follows: for $(t,{\bf x})\in (0,T]\times \mathbb{R}^{d_{\bf x}}$ and $s\in [t,T]$,
\begin{equation}\label{FBSDE_with_jump}
\begin{dcases}
dX_s^{t,{\bf x}}=b(s,X_s^{t,{\bf x}})ds+\sigma(s,X_s^{t,{\bf x}})dB_s+\int_{\mathbb{R}_0^{d_{\bf z}}}h(s,X_{s-}^{t,{\bf x}},{\bf z})\widetilde{\mu}(d{\bf z},ds;X_{s-}^{t,{\bf x}}),\\
dY_s^{t,{\bf x}}=-f(s,X_s^{t,{\bf x}},Y_s^{t,{\bf x}},Z_s^{t,{\bf x}},\Gamma_s^{t,{\bf x}})ds+Z_s^{t,{\bf x}}dB_s+\int_{\mathbb{R}_0^{d_{\bf z}}}U_{s-}^{t,{\bf x}}({\bf z})\widetilde{\mu}(d{\bf z},ds;X_{s-}^{t,{\bf x}}),\\ 
X_{t}^{t,{\bf x}}={\bf x},\quad Y_T^{t,{\bf x}}=\Phi(X_T^{t,{\bf x}}),\quad  \Gamma_s^{t,{\bf x}}=\int_{\mathbb{R}_0^{d_{\bf z}}}U_s^{t,{\bf x}}({\bf z})\lambda(s,X_s^{t,{\bf x}})\nu(d{\bf z}),
\end{dcases}
\end{equation}
with the goal of finding the initial state of the backward process $Y_t^{t,{\bf x}}$.

In the remainder of this section, we define every component of the FBSDE \eqref{FBSDE_with_jump} and impose relevant technical conditions.
In turn, $b:[0,T]\times \mathbb{R}^{d_{\bf x}}\to \mathbb{R}^{d_{\bf x}}$, $\sigma:[0,T]\times \mathbb{R}^{d_{\bf x}}\to \mathbb{R}^{d_{\bf x}\times d_0}$, $\{B_s: s\in [0,T]\}$ is the $(\mathcal{F}_s)_{s\in [0,T]}$-adapted standard Brownian motion in $\mathbb{R}^{d_0}$, $h:[0,T]\times\mathbb{R}^{d_{\bf x}}\times\mathbb{R}_0^{d_{\bf z}}\to\mathbb{R}^{d_{\bf x}}$, $f:[0,T]\times \mathbb{R}^{d_{\bf x}}\times \mathbb{R}^{d_{\bf y}}\times \mathbb{R}^{d_{\bf y}\times d_0}\times \mathbb{R}^{d_{\bf y}}\to \mathbb{R}^{d_{\bf y}}$, 
and $\Phi:\mathbb{R}^{d_{\bf x}}\to \mathbb{R}^{d_{\bf y}}$. 
To define $\widetilde{\mu}$ in \eqref{FBSDE_with_jump}, we first prepare $\mu$, which is a function mapping $\Omega\times [0,T]\times\mathbb{R}^{d_{\bf z}}_0\times\mathbb{R}^{d_{\bf x}}$ to $\mathbb{R}$ such that
\begin{itemize}
\setlength{\parskip}{0cm}
\setlength{\itemsep}{0cm}
\item for every $\omega\in\Omega$ and ${\bf x}\in\mathbb{R}^{d_{\bf x}}$, $\mu(\omega,\cdot;{\bf x})$
is a $\sigma$-finite measure on $[0,T]\times\mathbb{R}_0^{d_{\bf z}}$, 
\item for every $A\in {\cal B}([0,T])\otimes{\cal B}(\mathbb{R}_0^{d_{\bf z}})$ and ${\bf x}\in\mathbb{R}^{d_{\bf x}}$, $\mu(\cdot,A;{\bf x}):\Omega\rightarrow\mathbb{N}_0$ is a random variable on $(\Omega,{\cal F},\mathbb{P})$,
\end{itemize}
with abbreviation $\mu(ds,d{\bf z};{\bf x})(\omega):=\mu(\omega,ds,d{\bf z};{\bf x})$.
With ${\bf x}\in\mathbb{R}^{d_{\bf x}}$ fixed, $\mu(\cdot,\cdot;{\bf x})(\cdot)$ is a Poisson random measure on $\Omega\times [0,T]\times\mathbb{R}_0^{d_{\bf z}}$ with compensator $\lambda(s,{\bf x})\nu(d{\bf z})ds$, where $\nu$ is a L\'evy measure $\nu$ on $\mathbb{R}_0^{d_{\bf z}}$ with normalization $\nu(\mathbb{R}_0^{d_{\bf z}})=1$ and $\lambda$ is a function $[0,T]\times\mathbb{R}^{d_{\bf x}}\rightarrow [0,+\infty)$.
Clearly, the normalization $\nu(\mathbb{R}_0^{d_{\bf z}})=1$ loses no generality in the presence of the intensity function $\lambda(s,{\bf x})$.
With those components, we denote by $\widetilde{\mu}(d{\bf z},ds;{\bf x})$ a compensated Poisson random measure on $\mathbb{R}_0^{d_{\bf z}}\times [0,T]$, independent of the standard Brownian motion $\{B_s: s\in [0,T]\}$ if the argument ${\bf x}$ is fixed in $\mathbb{R}^{d_{\bf x}}$, defined as
\begin{equation}\label{compensator jumps}
\widetilde{\mu}(d{\bf z},ds;{\bf x})(\omega):=\widetilde{\mu}(\omega,d{\bf z},ds;{\bf x}):=\mu(d{\bf z},ds;{\bf x})(\omega)-\lambda(s,{\bf x})\nu(d{\bf z})ds,
\end{equation}
With the argument ${\bf x}$ governed by the marginal of the forward process as in the formulation \eqref{FBSDE_with_jump}, the random measure $\mu(d{\bf z},ds;X_{s-}^{t,{\bf x}})$ in \eqref{compensator jumps} is neither Poissonian \cite{doi:10.1137/16M1099005} nor independent of the standard Brownian motion $\{B_s:\,s\in [0,T]\}$, unless the intensity function $\lambda(s,{\bf x})$ is independent of the argument ${\bf x}$.
Before closing this section, we summarize technical conditions on the FBSDE with jumps \eqref{FBSDE_with_jump}.

\begin{assumption}{\rm \label{standing assumption}
Throughout the paper, we assume that the following conditions hold true:
\begin{itemize}
\setlength{\parskip}{0cm}
\setlength{\itemsep}{0cm}
    \item $b$ 
    and $\sigma$
    are continuously differentiable in ${\bf x}$ with bounded zeroth- and first-order partial derivatives in ${\bf x}$.
    \item $f$ 
    and $\Phi$
    are uniformly Lipschitz continuous in $({\bf x},{\bf y},{\bf z},u)$, and 
    continuously differentiable in $({\bf x},{\bf y},{\bf z})$ with bounded zeroth- and first-order partial derivatives in $({\bf x},{\bf y},{\bf z})$.
    \item $b,\sigma,f$ are uniformly $(1/2)$-H\"older continuous in $t$.
    \item $\lambda$ 
    is non-negative, continuous in all their arguments, with $\sup_{(s,{\bf x})\in [0,T]\times \mathbb{R}^{d_{\bf x}}} \lambda(s,{\bf x})=\eta$ for some non-negative $\eta$.
    \item $v$ is a L\'evy measure on $\mathbb{R}_0^{d_{\bf z}}$ satisfying  $v(\mathbb{R}_0^{d_{\bf z}})=1$.
    \item $h$
    satisfies $|h(t,{\bf x},{\bf z})|\leq C\min\{1,|{\bf z}|\}$ and $|h(t,{\bf x},{\bf z})-h(t,{\bf x}',{\bf z})|\leq C|{\bf x}-{\bf x}'|\min\{1,|{\bf z}|\}$ for all $(t,{\bf x},{\bf x}',{\bf z})\in[0,T]\times\mathbb{R}^{d_{\bf x}}\times\mathbb{R}^{d_{\bf x}}\times\mathbb{R}_0^{d_{\bf z}}$ for some $C>0$.
    \item $\sigma$ is uniformly elliptic on $[0,T]\times\mathbb{R}^{d_{\bf x}}$.\qed
  \end{itemize}
}\end{assumption}

We note that the primary focus of the forward scheme developed in what follows lies in the presence of random jumps (that is, $\eta>0$), whereas the scheme can also handle the case of no jumps (that is, $\eta=0$).
We illustrate this point in Remark \ref{remark bender denk 2}.

The aim of the present work is to find the initial state of the backward process $Y_t^{t,{\bf x}}$, which can be expressed as:
\begin{equation}\label{udef}
Y_t^{t,{\bf x}}=\mathbb{E}\left[Y_t^{t,{\bf x}}\right]= \mathbb{E}\left[\Phi(X_T^{t,{\bf x}})+\int_t^T f(s,X_s^{t,{\bf x}},Y_s^{t,{\bf x}},Z_s^{t,{\bf x}},\Gamma_s^{t,{\bf x}}) ds\right].
\end{equation}
To this end, consider the partial integro-differential equation (PIDE): 
\begin{equation}\label{PIDE}
\begin{dcases}
\frac{\partial}{\partial s} u(s,{\bf x})+\left\langle b(s,{\bf x}),\nabla  u(s,{\bf x})\right\rangle +\frac{1}{2}{\rm tr}\left[(\sigma(s,{\bf x}))^{\otimes 2}{\rm Hess}(u(s,{\bf x}))\right]+f(s,{\bf x},u(s,{\bf x}),(\nabla u \sigma)(s,{\bf x}),\gamma(s,{\bf x};u))\\
\qquad +\int_{\mathbb{R}_0^{d_{\bf z}}}(u(s,{\bf x}+h(s,{\bf x},{\bf z}))-u(s,{\bf x})-\left\langle h(s,{\bf x},{\bf z}),\nabla u(s,{\bf x})\right\rangle)\lambda(s,{\bf x})\nu(d{\bf z})=0, \quad (s,{\bf x})\in [t,T)\times\mathbb{R}^{d_{\bf x}},\\
u(T,{\bf x})=\Phi({\bf x}),\quad {\bf x}\in\mathbb{R}^{d_{\bf x}},
\end{dcases}
\end{equation}
where the gradient and Hessian are taken with respect to the state variable ${\bf x}$, and
\begin{equation}\label{def of gamma}
\gamma(s,{\bf x};\phi):=\int_{\mathbb{R}_0^{d_{\bf z}}}\left(\phi(s,{\bf x}+h(s,{\bf x},{\bf z}))-\phi(s,{\bf x})\right)\lambda(s,{\bf x})\nu(d{\bf z}),
\end{equation}
for $\phi: [0,T]\times \mathbb{R}^{d_{\bf x}}\to \mathbb{R}$.
In the PIDE \eqref{PIDE}, we have written $(\nabla v \sigma)(s,{\bf x}):=(\nabla v_1(s,{\bf x}),\nabla v_2(s,{\bf x}),\cdots,\nabla v_{d_{\bf y}}(s,{\bf x}))^{\top}\sigma(s,{\bf x})\in \mathbb{R}^{d_{\bf y}\times d_0}$, where $v_k$ denotes the $k$ component of the $\mathbb{R}^{d_{\bf y}}$-valued function $v$, that is, $v(s,{\bf x}):=(v_1(s,{\bf x}),v_2(s,{\bf x}),\cdots,v_{d_{\bf y}}(s,{\bf x}))^{\top}$.
It is then well known that the solution of the FBSDE \eqref{FBSDE_with_jump} can be represented in terms of the solution to the PIDE \eqref{PIDE} by the nonlinear Feynman Kac formula, as follows:
\begin{equation}\label{nFK_jump0}
Y_s^{t,{\bf x}}=u(s,X_s^{t,{\bf x}}),\ \ Z_s^{t,{\bf x}}=
(\nabla u \sigma)(s,X_s^{t,{\bf x}}),\ \ U_s^{t,{\bf x}}({\bf z})=u(s,X_s^{t,{\bf x}}+h(s,X_s^{t,{\bf x}},{\bf z}))-u(s,X_s^{t,{\bf x}}),
\end{equation}
for all $s\in [t,T]$.
In particular, the initial state of the backward process $Y_t^{t,{\bf x}}$ can thus be represented as  
\begin{equation}\label{nFK_jump}
Y_t^{t,{\bf x}}=u(t,{\bf x}),
\end{equation}
in addition to the representation \eqref{udef}.

\begin{remark}{\rm \label{remark bender denk 1}
Before delving into the main sections, let us motivate the present work based upon what we have prepared so far.
As outlined in the introductory section, our aim is to establish a recursion and its corresponding forward scheme for the initial state $Y_t^{t,{\bf x}}$ of the FBSDE with jumps \eqref{FBSDE_with_jump} or, equivalently, the solution $u(t,{\bf x})$ to the PIDE \eqref{PIDE} 
through a recursive representation where random jumps are somehow dropped.
Given this context, one might wonder about the implications of directly employing the existing forward scheme \cite{Picard_it1} proposed for the standard FBSDE (without jumps) for achieving this goal.
In short, the forward scheme \cite{Picard_it1} produces the Picard iteration $\{v_m\}_{m\in \mathbb{N}_0}$ involving random jumps, as follows:
\begin{equation}\label{v0}
v_0(t,{\bf x})=\mathbb{E}\left[\Phi(X_T^{t,{\bf x}})\right],
\end{equation}
and then, recursively for $m\in \mathbb{N}$,
\begin{equation}\label{vm}
v_m(t,{\bf x})=\mathbb{E}\left[\Phi(X_T^{t,{\bf x}})+\int_t^Tf(s,X_s^{t,{\bf x}},v_{m-1}(s,X_s^{t,{\bf x}}),(\nabla v_{m-1} \sigma)(s,X_s^{t,{\bf x}}),\gamma(s,X_s^{t,{\bf x}};v_{m-1}))ds \right],
\end{equation}
that is, all formulated upon the forward process with jumps $\{X_s^{t,x}:\, s\in [t,T]\}$ in the FBSDE \eqref{FBSDE_with_jump}.
In view of this result, we aim to establish a recursion and its corresponding forward scheme for potentially high-dimensional FBSDEs with jumps \eqref{FBSDE_with_jump}, but here without requiring random jumps in the forward process.
\qed}\end{remark}

\section{The recursive representation}
\label{section recursive representation}

We are now ready to establish a recursive representation for the initial state $Y_t^{t,{\bf x}}$ of the FBSDE with jumps \eqref{FBSDE_with_jump} or, equivalently, the solution $u(t,{\bf x})$ to the PIDE \eqref{PIDE} through the identity \eqref{nFK_jump}, with an aim towards numerical approximation,  particularly in high-dimensional settings, with the help of machine learning techniques (Section \ref{section forward scheme}).
To this end, we proceed in three steps: first, we transform the PIDE \eqref{PIDE} to a nonlinear PDE by decoupling random jumps (Section \ref{subsection decoupling jumps}); next, we linearize the resulting nonlinear PDEs (Section \ref{subsection recursive representation}); and finally, we conduct a convergence and error analysis (Section \ref{subsection convergence and error analysis}).

\subsection{Decoupling jumps from FBSDEs with jumps}
\label{subsection decoupling jumps}

Consider the FBSDE in the absence of random jumps, defined as follows: for $s\in [t,T]$,
\begin{equation}\label{FBSDE_without_jump}
\begin{dcases}
d\widetilde{X}_s^{t,{\bf x}} = b(s,\widetilde{X}_s^{t,{\bf x}})ds+ \sigma (s,\widetilde{X}_s^{t,{\bf x}})dB_s-\int_{\mathbb{R}_0^{d_{\bf z}}}h(s,\widetilde{X}_s^{t,{\bf x}},{\bf z})\lambda(s,\widetilde{X}_s^{t,{\bf x}})\nu(d{\bf z})ds,\\
d\widetilde{Y}_s^{t,{\bf x}}=-f(s,\widetilde{X}_s^{t,{\bf x}},\widetilde{Y}_s^{t,{\bf x}},\widetilde{Z}_s^{t,{\bf x}},\widetilde{\Gamma}_s^{t,{\bf x}})ds 
+\widetilde{Z}_s^{t,{\bf x}}dB_s
-\widetilde{\Gamma}_s^{t,{\bf x}}ds,\\
\widetilde{X}_{t}^{t,{\bf x}}={\bf x},\quad \widetilde{Y}_T^{t,{\bf x}}=\Phi(\widetilde{X}_T^{t,{\bf x}}),\quad \widetilde{\Gamma}_s^{t,{\bf x}}=\int_{\mathbb{R}_0^{d_{\bf z}}}\widetilde{U}_s^{t,{\bf x}}({\bf z})\lambda(s,\widetilde{X}_s^{t,{\bf x}})\nu(d{\bf z}),
\end{dcases}
\end{equation}
to which we refer as the FBSDE without jumps.
For clarification, the term ``without jumps'' does not imply the complete absence of a jump component, but rather the absence of random jumps, specifically.
In light of the original FBSDE with jumps \eqref{FBSDE_with_jump} and the compensated random measure \eqref{compensator jumps}, all random jumps (arising from the random counting measure $\mu(d{\bf z},ds;{\bf x})$) are suppressed, whereas all terms associated with its compensator $\lambda(s,{\bf x})\nu(d{\bf z})ds$ are retained throughout the formulation \eqref{FBSDE_without_jump}.

The FBSDE without jumps \eqref{FBSDE_without_jump} is closely related to the original FBSDE \eqref{FBSDE_with_jump} and the solution $u$ to its associated PIDE \eqref{PIDE}, as follows.

\begin{theorem}\label{theorem Ytilde}
It holds that for every $(t,{\bf x})\in [0,T]\times \mathbb{R}^{d_{\bf x}}$,
\[
\widetilde{Y}_t^{t,{\bf x}}=u(t,{\bf x}).
\]
\end{theorem}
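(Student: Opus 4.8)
The plan is to show that the classical solution $u$ of the PIDE \eqref{PIDE}, which represents the FBSDE with jumps through \eqref{nFK_jump0}--\eqref{nFK_jump}, is simultaneously the nonlinear Feynman--Kac representation of the jump-free system \eqref{FBSDE_without_jump}; the identity $\widetilde{Y}_t^{t,{\bf x}}=u(t,{\bf x})$ will then follow from uniqueness of the latter. First I would record that, since Assumption \ref{standing assumption} makes $\nu$ a \emph{finite} measure ($\nu(\mathbb{R}_0^{d_{\bf z}})=1$) and $|h(s,{\bf x},{\bf z})|\leq C\min\{1,|{\bf z}|\}$, the integral $\int_{\mathbb{R}_0^{d_{\bf z}}}h(s,{\bf x},{\bf z})\lambda(s,{\bf x})\nu(d{\bf z})$ is finite and bounded, so that the modified drift in \eqref{FBSDE_without_jump} is well defined and the nonlocal integrand of \eqref{PIDE} may be split legitimately into
\[
\int_{\mathbb{R}_0^{d_{\bf z}}}\bigl(u(s,{\bf x}+h)-u(s,{\bf x})-\langle h,\nabla u(s,{\bf x})\rangle\bigr)\lambda(s,{\bf x})\nu(d{\bf z})=\gamma(s,{\bf x};u)-\Bigl\langle\textstyle\int_{\mathbb{R}_0^{d_{\bf z}}}h\,\lambda(s,{\bf x})\nu(d{\bf z}),\nabla u(s,{\bf x})\Bigr\rangle,
\]
with $\gamma$ as in \eqref{def of gamma}.

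The key algebraic step is that substituting this back into \eqref{PIDE} merges the gradient correction $-\langle\int h\lambda\nu,\nabla u\rangle$ into the first-order term, converting the drift $b$ into precisely the modified drift $b-\int h\lambda\nu$ that generates $\widetilde{X}^{t,{\bf x}}$, while the remaining $\gamma(s,{\bf x};u)$ survives as an additive source next to $f$. Thus $u$ solves the semilinear (local) PDE
\[
\partial_s u+\bigl\langle b-\textstyle\int_{\mathbb{R}_0^{d_{\bf z}}}h\,\lambda\nu(d{\bf z}),\nabla u\bigr\rangle+\tfrac12{\rm tr}\bigl[(\sigma)^{\otimes2}{\rm Hess}(u)\bigr]+f\bigl(s,{\bf x},u,(\nabla u\,\sigma),\gamma(s,{\bf x};u)\bigr)+\gamma(s,{\bf x};u)=0,
\]
with terminal data $u(T,{\bf x})=\Phi({\bf x})$, and this is exactly the PDE attached to \eqref{FBSDE_without_jump} by the standard (jump-free) nonlinear Feynman--Kac formula.

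Next I would verify the representation probabilistically. Invoking parabolic regularity under the uniform ellipticity of $\sigma$ and the smoothness hypotheses of Assumption \ref{standing assumption} to guarantee $u\in C^{1,2}$, I would apply It\^o's formula to $u(s,\widetilde{X}_s^{t,{\bf x}})$ along the continuous process $\widetilde{X}^{t,{\bf x}}$; since $\widetilde{X}^{t,{\bf x}}$ carries no jumps, no integral term arises, and using the displayed PDE to eliminate the second-order generator shows that the quadruple
\[
\bigl(\widetilde{Y}_s,\widetilde{Z}_s,\widetilde{\Gamma}_s,\widetilde{U}_s({\bf z})\bigr)=\bigl(u(s,\widetilde{X}_s^{t,{\bf x}}),(\nabla u\,\sigma)(s,\widetilde{X}_s^{t,{\bf x}}),\gamma(s,\widetilde{X}_s^{t,{\bf x}};u),u(s,\widetilde{X}_s^{t,{\bf x}}+h)-u(s,\widetilde{X}_s^{t,{\bf x}})\bigr)
\]
satisfies the backward equation and terminal condition of \eqref{FBSDE_without_jump}. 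Crucially, the $\widetilde{\Gamma}$ fed into $f$ and the $\widetilde{\Gamma}$ appearing additively are the same object $\gamma(s,\widetilde{X}_s;u)$, so the construction is self-consistent with the definition $\widetilde{\Gamma}_s=\int\widetilde{U}_s({\bf z})\lambda\nu(d{\bf z})$. By uniqueness of the solution to the jump-free FBSDE \eqref{FBSDE_without_jump} under Assumption \ref{standing assumption}, this verified candidate is the solution, whence $\widetilde{Y}_t^{t,{\bf x}}=u(t,{\bf x})$.

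I expect the main obstacle to be the regularity and well-posedness bookkeeping rather than the algebra: securing enough smoothness of $u$ to legitimize It\^o's formula (or, failing a classical solution, rephrasing the whole argument in the viscosity-solution sense and matching the two equations there), and pinning down the uniqueness class for the decoupled system \eqref{FBSDE_without_jump}, whose driver is self-referential through $\widetilde{\Gamma}_s=\gamma(s,\widetilde{X}_s;u)$. The finiteness of $\nu$ is what renders all of this routine, since it removes any small-jump integrability issue in separating the compensator into a drift and the $\gamma$-source.
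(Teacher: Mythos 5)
Your proposal is correct and follows essentially the same route as the paper's proof: both apply It\^o's formula to $u(s,\widetilde{X}_s^{t,{\bf x}})$ along the jump-free diffusion, use the PIDE \eqref{PIDE} to absorb the compensator term into the modified drift and the additive source $\gamma(s,\cdot\,;u)$, and identify $(\widetilde{Y}_s^{t,{\bf x}},\widetilde{Z}_s^{t,{\bf x}},\widetilde{U}_s^{t,{\bf x}})$ with $\bigl(u(s,\widetilde{X}_s^{t,{\bf x}}),(\nabla u\,\sigma)(s,\widetilde{X}_s^{t,{\bf x}}),u(s,\widetilde{X}_s^{t,{\bf x}}+h)-u(s,\widetilde{X}_s^{t,{\bf x}})\bigr)$. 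Your rendering is merely more explicit than the paper's terse verification (spelling out the splitting of the nonlocal term under the finiteness of $\nu$, the $C^{1,2}$ regularity needed for It\^o, and the uniqueness of the solution to \eqref{FBSDE_without_jump} used to conclude), and it also records the correct sign of the $\gamma$-source in the resulting probabilistic representation.
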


In light of the representation \eqref{nFK_jump}, we have thus derived the identity 
\begin{equation}\label{Ytilde = Y}
\widetilde{Y}_t^{t,{\bf x}}=u(t,{\bf x})=Y_t^{t,{\bf x}}.
\end{equation}
In short, we have matched the initial state $\widetilde{Y}_t^{t,{\bf x}}$ of the FBSDE without jumps \eqref{FBSDE_with_jump} to the initial state $Y_t^{t,{\bf x}}$ of the FBSDE with jumps \eqref{FBSDE_with_jump} by dropping all random jumps and adjusting the drift coefficient and driver appropriately, as follows:
\begin{eqnarray*}
 b(s,X_s^{t,{\bf x}}) &
 \Longrightarrow & b(s,\widetilde{X}_s^{t,{\bf x}})-\int_{\mathbb{R}_0^{d_{\bf z}}}h(s,\widetilde{X}_s^{t,{\bf x}},{\bf z})\lambda(s,\widetilde{X}_s^{t,{\bf x}})\nu(d{\bf z}),\\
 f(s,X_s^{t,{\bf x}},Y_s^{t,{\bf x}},Z_s^{t,{\bf x}},\Gamma_s^{t,{\bf x}})&
 \Longrightarrow &
 f(s,\widetilde{X}_s^{t,{\bf x}},\widetilde{Y}_s^{t,{\bf x}},\widetilde{Z}_s^{t,{\bf x}},\widetilde{\Gamma}_s^{t,{\bf x}})
+\widetilde{\Gamma}_s^{t,{\bf x}}.
\end{eqnarray*}

From a numerical perspective, thanks to the identity \eqref{Ytilde = Y}, it suffices to find the initial state $\widetilde{Y}_t^{t,{\bf x}}$ of the FBSDE without jumps \eqref{FBSDE_without_jump} when computing the solution $u(t,{\bf x})$, rather than $Y_t^{t,{\bf x}}$ of the original FBSDE with jumps \eqref{FBSDE_with_jump}.
It is worth noting that the identity \eqref{Ytilde = Y} can also be of independent theoretical interest, in the sense that the two FBSDEs \eqref{FBSDE_with_jump} and \eqref{FBSDE_without_jump} exhibit markedly different trajectories (specifically, with or without discontinuities) during intermediate times.

\subsection{Recursion by linear PDEs}
\label{subsection recursive representation}

Even after random jumps have been completely suppressed, dealing with the FBSDE \eqref{FBSDE_without_jump} numerically remains often challenging due to its high nonlinearity, as evidenced by the long-term and ongoing development of various numerical methods for FBSDEs \cite{10.1214/23-PS18}.
In particular, to avoid nested conditional expectations, which are typical in discretization schemes, we linearize the FBSDE without jumps \eqref{FBSDE_without_jump} through recursion, following a similar approach to the recursion developed for FBSDEs in \cite{Picard_it1} and for jump-diffusion processes \cite{doi:10.1080/17442508.2023.2259534} and regime switching \cite{QIU2022109301, regimebounds}.
To be precise, consider a sequence of linear PDEs, starting with
\begin{equation}\label{PIDE_0}
\begin{dcases}
\frac{\partial}{\partial s}w_0(s,{\bf x})
+\left\langle b(s,{\bf x})-\int_{\mathbb{R}_0^{d_{\bf z}}}h(s,{\bf x},{\bf z})\lambda(s,{\bf x})\nu(d{\bf z}), \nabla w_0(s,{\bf x})\right\rangle+\frac{1}{2}{\rm tr}\left[(\sigma(s,{\bf x}))^{\otimes 2}{\rm Hess}(w_0(s,{\bf x}))\right]\\
\qquad\qquad\qquad\qquad\qquad\qquad\qquad\qquad\qquad\qquad\qquad\qquad\qquad =0, \quad(s,{\bf x})\in [t,T)\times\mathbb{R}^{d_{\bf x}},\\
w_0(T,{\bf x})=\Phi({\bf x}),\quad {\bf x}\in\mathbb{R}^{d_{\bf x}},
\end{dcases}
\end{equation}
and then, recursively for $m\in\mathbb{N}$, 
\begin{equation}\label{PIDE_m}
\begin{dcases}
\frac{\partial}{\partial s}w_m(s,{\bf x})
+\left\langle b(s,{\bf x})-\int_{\mathbb{R}_0^{d_{\bf z}}}h(s,{\bf x},{\bf z})\lambda(s,{\bf x})\nu(d{\bf z}), \nabla w_m(s,{\bf x})\right\rangle+\frac{1}{2}{\rm tr}\left[(\sigma(s,{\bf x}))^{\otimes 2}{\rm Hess}(w_m(s,{\bf x}))\right]\\
\qquad +f(s,{\bf x},w_{m-1}(s,{\bf x}),(\nabla w_{m-1} \sigma)(s,{\bf x}),\gamma(s,{\bf x};w_{m-1}))+\gamma(s,{\bf x};w_{m-1})=0, \quad
(s,{\bf x})\in [t,T)\times\mathbb{R}^{d_{\bf x}},\\
w_m(T,{\bf x})=\Phi({\bf x}),\quad {\bf x}\in\mathbb{R}^{d_{\bf x}}.
\end{dcases}
\end{equation}

The PDEs \eqref{PIDE_0} and \eqref{PIDE_m} are not only linear and recursive, as is evident, but those are also profound for the reason that those admit the following probabilistic representations, specifically formulated on the forward process without jumps $\{\widetilde{X}_s^{t,{\bf x}}:\,s\in [t,T]\}$ appearing in the FBSDE \eqref{FBSDE_without_jump}.

\begin{theorem}\label{theorem section 3.2}
If $w_m\in \mathcal{C}^{1,2}([0,T]\times\mathbb{R}^{d_{\bf x}};\mathbb{R}^{d_{\bf y}})$, $\sup_{(t,{\bf x})\in [0,T]\times \mathbb{R}^{d_{\bf x}}}|w_m(t,{\bf x})|\leq C$, and $\sup_{(t,{\bf x})\in [0,T]\times \mathbb{R}^{d_{\bf x}}}|\nabla w_m(t,{\bf x})|\leq C$ for some universal constant $C>0$ and all $m\in\mathbb{N}_0$, then it holds that for every $(t,{\bf x})\in [0,T]\times \mathbb{R}^{d_{\bf x}}$,
\begin{equation}
w_0(t,{\bf x})=\mathbb{E}\left[\Phi(\widetilde{X}_T^{t,{\bf x}})\right],\label{w_0_E}
\end{equation}
and
\begin{equation}
w_m(t,{\bf x})=\mathbb{E}\left[\Phi(\widetilde{X}_T^{t,{\bf x}})+\int_t^T f(s,\widetilde{X}_s^{t,{\bf x}},w_{m-1}(s,\widetilde{X}_s^{t,{\bf x}}),(\nabla w_{m-1} \sigma)(s,\widetilde{X}_s^{t,{\bf x}}),\gamma(s,\widetilde{X}_s^{t,{\bf x}};w_{m-1}))ds 
+\int_t^T \gamma(s,\widetilde{X}_s^{t,{\bf x}};w_{m-1})ds\right],\label{w_m_E}
\end{equation}
for all $m\in\mathbb{N}$.
\end{theorem}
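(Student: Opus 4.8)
The plan is to recognize \eqref{PIDE_0} and \eqref{PIDE_m} as Feynman--Kac (Kolmogorov backward) equations associated with the jump-free forward diffusion $\{\widetilde{X}_s^{t,{\bf x}}:\,s\in[t,T]\}$ of \eqref{FBSDE_without_jump}, and then to read off the stated expectations by a single application of It\^o's formula. To this end I would first write the time-dependent generator of $\widetilde{X}^{t,{\bf x}}$ as
\[
\mathcal{A}_s\phi({\bf x}):=\left\langle b(s,{\bf x})-\int_{\mathbb{R}_0^{d_{\bf z}}}h(s,{\bf x},{\bf z})\lambda(s,{\bf x})\nu(d{\bf z}),\nabla\phi({\bf x})\right\rangle+\frac{1}{2}{\rm tr}\left[(\sigma(s,{\bf x}))^{\otimes 2}{\rm Hess}(\phi({\bf x}))\right],
\]
so that \eqref{PIDE_0} reads $\partial_s w_0+\mathcal{A}_s w_0=0$ with $w_0(T,\cdot)=\Phi$, while \eqref{PIDE_m} reads $\partial_s w_m+\mathcal{A}_s w_m+g_{m-1}=0$ with $w_m(T,\cdot)=\Phi$, where $g_{m-1}(s,{\bf x}):=f(s,{\bf x},w_{m-1}(s,{\bf x}),(\nabla w_{m-1}\sigma)(s,{\bf x}),\gamma(s,{\bf x};w_{m-1}))+\gamma(s,{\bf x};w_{m-1})$ is, for each fixed $m$, a deterministic source depending only on $(s,{\bf x})$. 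Since the two cases differ only by the presence or absence of this source, I would treat \eqref{w_m_E} in full and obtain \eqref{w_0_E} as the degenerate instance $g_{m-1}\equiv 0$; no induction is needed, as the hypotheses supply the required regularity and uniform bounds for every $m\in\mathbb{N}_0$ simultaneously.

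Using $w_m\in\mathcal{C}^{1,2}$, I would next apply It\^o's formula componentwise to $s\mapsto w_m(s,\widetilde{X}_s^{t,{\bf x}})$ on $[t,T]$. Because $\widetilde{X}^{t,{\bf x}}$ is a continuous semimartingale with drift $b-\int h\lambda\nu$ and diffusion $\sigma$, the finite-variation part of $dw_m(s,\widetilde{X}_s^{t,{\bf x}})$ equals $(\partial_s w_m+\mathcal{A}_s w_m)(s,\widetilde{X}_s^{t,{\bf x}})ds=-g_{m-1}(s,\widetilde{X}_s^{t,{\bf x}})ds$ by \eqref{PIDE_m}, while its martingale part is $(\nabla w_m\sigma)(s,\widetilde{X}_s^{t,{\bf x}})dB_s$. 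Integrating from $t$ to $T$, invoking $w_m(T,\widetilde{X}_T^{t,{\bf x}})=\Phi(\widetilde{X}_T^{t,{\bf x}})$, and rearranging would then yield
\[
w_m(t,{\bf x})=\mathbb{E}\left[\Phi(\widetilde{X}_T^{t,{\bf x}})+\int_t^T g_{m-1}(s,\widetilde{X}_s^{t,{\bf x}})ds\right]
\]
as soon as the stochastic integral is shown to have zero mean; unfolding $g_{m-1}$ recovers \eqref{w_m_E}, and the same computation with no source gives \eqref{w_0_E}.

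To make the expectation rigorous I would verify two integrability facts. First, the stochastic integral is a true martingale, hence mean-zero: the hypothesis $\sup|\nabla w_m|\leq C$ together with the boundedness of $\sigma$ (Assumption \ref{standing assumption}) makes $(\nabla w_m\sigma)$ uniformly bounded, so $\mathbb{E}\int_t^T|(\nabla w_m\sigma)(s,\widetilde{X}_s^{t,{\bf x}})|^2 ds<\infty$. Second, $\mathbb{E}\int_t^T|g_{m-1}(s,\widetilde{X}_s^{t,{\bf x}})|ds<\infty$: since $\nu$ is normalized to a probability measure and $|w_{m-1}|\leq C$, the quantity $\gamma(s,{\bf x};w_{m-1})$ from \eqref{def of gamma} is bounded by $2C\eta$, so the Lipschitz property of $f$ forces $g_{m-1}$ to have at most linear growth in ${\bf x}$, which is absorbed by the standard moment bound $\mathbb{E}[\sup_{s\in[t,T]}|\widetilde{X}_s^{t,{\bf x}}|]<\infty$ for the diffusion \eqref{FBSDE_without_jump}. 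Joint continuity of $g_{m-1}$ in $(s,{\bf x})$, needed for the time integral to be well defined, follows from $w_{m-1}\in\mathcal{C}^{1,2}$ and dominated convergence in \eqref{def of gamma}, with dominating function $\min\{2C,C^2|h|\}\lambda$ integrable against the finite measure $\nu$.

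I expect the only genuine obstacle to be this second integrability bound, that is, controlling $g_{m-1}\circ\widetilde{X}$: the source mixes the bounded quantities $w_{m-1}$, $(\nabla w_{m-1}\sigma)$, and $\gamma(s,\cdot;w_{m-1})$ with the unbounded state ${\bf x}=\widetilde{X}_s^{t,{\bf x}}$ through $f$, so one must lean on the probability-measure normalization of $\nu$ to bound $\gamma$ and on finite moments of the jump-free diffusion to absorb the linear growth of $f$. Everything else reduces to a routine Feynman--Kac verification, in which the additive $\gamma$-term in \eqref{PIDE_m} plays no special role beyond being part of the deterministic source $g_{m-1}$.
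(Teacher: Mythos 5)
Your proof is correct, and it reaches the conclusion by a more direct route than the paper. The paper's own proof introduces an auxiliary sequence of backward equations $(\widetilde{Y}^{m,t,{\bf x}},\widetilde{Z}^{m,t,{\bf x}},\widetilde{U}^{m,t,{\bf x}})$, $m\in\mathbb{N}_0$, driven by the same jump-free forward process --- a Picard-type iteration at the BSDE level --- asserts their existence and uniqueness under Assumption \ref{standing assumption}, identifies $\widetilde{Y}^{m,t,{\bf x}}_s=w_m(s,\widetilde{X}_s^{t,{\bf x}})$, $\widetilde{Z}^{m,t,{\bf x}}_s=(\nabla w_m\sigma)(s,\widetilde{X}_s^{t,{\bf x}})$ and $\widetilde{U}^{m,t,{\bf x}}_s({\bf z})=w_m(s,\widetilde{X}_s^{t,{\bf x}}+h(s,\widetilde{X}_s^{t,{\bf x}},{\bf z}))-w_m(s,\widetilde{X}_s^{t,{\bf x}})$ via the nonlinear Feynman--Kac formula for classical solutions, and only then takes expectations at $s=t$. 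You bypass the BSDE layer entirely: since $w_{m-1}$ is already determined at step $m$, your $g_{m-1}$ is a deterministic source, so each of \eqref{PIDE_0} and \eqref{PIDE_m} is a linear Kolmogorov equation, and the representation follows from a single application of It\^o's formula, the true-martingale property of the stochastic integral, and the integrability checks you spell out (boundedness of $(\nabla w_m\sigma)$ from the hypothesis and Assumption \ref{standing assumption}; boundedness of $\gamma(s,\cdot;w_{m-1})$ by $2C\eta$ using $|w_{m-1}|\le C$, $\lambda\le\eta$ and $\nu(\mathbb{R}_0^{d_{\bf z}})=1$; linear growth of $f$ absorbed by moments of the jump-free diffusion). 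The mathematical substance coincides --- the Feynman--Kac identification the paper invokes is itself established by exactly your It\^o computation --- but your version is self-contained and makes explicit where each hypothesis enters, whereas the paper's BSDE formulation additionally yields the $\widetilde{Z}^{m}$ and $\widetilde{U}^{m}$ representations, which mirror the triplet structure of the FBSDE \eqref{FBSDE_without_jump} and connect naturally to the $({\cal U},{\cal V})$ pairs of the forward scheme and to the error analysis of Theorem \ref{thm1}.
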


It is worth stressing again that the absence of random jumps throughout the probabilistic representations \eqref{w_0_E} and \eqref{w_m_E} contrasts sharply with the presence of random jumps in the aforementioned recursion upon \eqref{v0} and \eqref{vm}.
Evidently, this difference has the potential to significantly impact the computation.

\subsection{Convergence and error analysis}
\label{subsection convergence and error analysis}

In addition to the absence of random jumps in the recursion $\{w_m\}_{m\in\mathbb{N}_0}$, the following theorem ensures its convergence and provides an error estimate relative to the solution to the original PIDE \eqref{PIDE}.
In short, the convergence is uniform in both time and state variables, occurring exponentially fast through recursion for the FBSDE with jump \eqref{FBSDE_with_jump} on the basis of an lower error bound for decreased jump intensities.
To maintain the primary focus on the FBSDE with jumps \eqref{FBSDE_with_jump} (that is, $\eta>0$), we do not address the case of zero jump intensity (that is, $\eta=0$) here but provide some insights in Remark \ref{remark bender denk 2}. 

\begin{theorem}\label{thm1}
It holds 
that for every $m\in \mathbb{N}$ and $\eta>0$,
\[
  \sup_{(t,{\bf x})\in [0,T]\times \mathbb{R}^{d_{\bf x}}}\left|(u-w_m)(t,{\bf x})\right|\le C2^{-m}(1\land\eta^m),
\]
for some positive constant $C$ that depends only on the Lipschitz constant of $f$ and the bounds of $u$, $w_0$, $b$, $\sigma$
and $h$.
\end{theorem}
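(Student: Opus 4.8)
The plan is to recognise the PIDE solution $u$ as a fixed point of the very Feynman--Kac map that generates the recursion $\{w_m\}_{m\in\mathbb{N}_0}$, and then to estimate the resulting Picard differences along the jump-free flow $\{\widetilde X_s^{t,{\bf x}}\}_{s\in[t,T]}$. First I would rewrite the integral term in \eqref{PIDE} as $\gamma(s,{\bf x};u)-\langle\int_{\mathbb{R}_0^{d_{\bf z}}}h(s,{\bf x},{\bf z})\lambda(s,{\bf x})\nu(d{\bf z}),\nabla u\rangle$, which recasts \eqref{PIDE} into exactly the form of \eqref{PIDE_m} with every occurrence of $w_{m-1}$ replaced by $u$. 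Writing $L$ for the linear generator of $\widetilde X$ appearing in \eqref{PIDE_0}--\eqref{PIDE_m} and $G[\phi]:=f(s,{\bf x},\phi,(\nabla\phi\,\sigma),\gamma(s,{\bf x};\phi))+\gamma(s,{\bf x};\phi)$, this gives $Lu+G[u]=0$ and $Lw_m+G[w_{m-1}]=0$ with the common terminal datum $\Phi$. Applying the linear Feynman--Kac identity underlying Theorem \ref{theorem section 3.2} to $w_m$ and, with frozen source $G[u]$, to $u$ itself, the terminal terms cancel and I obtain
\[
(u-w_m)(t,{\bf x})=\mathbb{E}\!\left[\int_t^T\big(G[u]-G[w_{m-1}]\big)(s,\widetilde X_s^{t,{\bf x}})\,ds\right].
\]

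Next I would bound the integrand. Splitting $G[u]-G[w_{m-1}]$ into the driver difference and the standalone compensator difference, Lipschitz continuity of $f$ controls the former by $L_f\big(|u-w_{m-1}|+|(\nabla(u-w_{m-1}))\sigma|+|\gamma(\cdot;u)-\gamma(\cdot;w_{m-1})|\big)$. The decisive observation is that, since $\gamma(s,{\bf x};\phi)=\int_{\mathbb{R}_0^{d_{\bf z}}}(\phi(s,{\bf x}+h)-\phi(s,{\bf x}))\lambda(s,{\bf x})\nu(d{\bf z})$ with $\lambda\le\eta$ and $\nu(\mathbb{R}_0^{d_{\bf z}})=1$, every compensator difference satisfies $|\gamma(\cdot;u)-\gamma(\cdot;w_{m-1})|\le 2\eta\,\|u-w_{m-1}\|_\infty$. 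This is the mechanism that injects a factor $\eta$ at each step and is responsible for the $(1\wedge\eta^m)$ in the final bound.

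I would then iterate. The sup-norm estimate $\|u-w_m\|_\infty\le T\sup_{s}|G[u]-G[w_{m-1}]|$ combined with the above yields a coupled recursive inequality of the schematic form $a_m\le \kappa\,a_{m-1}+\rho\,c_{m-1}+\theta\,\eta\,a_{m-1}$, where $a_m:=\|u-w_m\|_\infty$, $c_m:=\|\nabla(u-w_m)\|_\infty$, and $\kappa,\rho,\theta$ depend only on $T$, $L_f$ and $\|\sigma\|$. The base case $a_0=\|u-w_0\|_\infty$ is finite by the assumed bounds on $u$ and $w_0$, and the goal is to telescope this to $a_m\le C2^{-m}(1\wedge\eta^m)$ with $C$ depending only on $L_f$ and the stated bounds on $u,w_0,b,\sigma,h$.

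The main obstacle is twofold and lives in the driver difference. First, the ${\bf z}$-slot term $|(\nabla(u-w_{m-1}))\sigma|$ cannot be absorbed by $a_{m-1}$ alone, so the recursion does not close in the value norm; I would handle this by controlling $c_m=\|\nabla(u-w_m)\|_\infty$ simultaneously, using the uniform ellipticity of $\sigma$ to license a Bismut--Elworthy--Li (or parabolic Schauder) gradient representation that bounds $c_m$ by the sup-norm of the source $G[u]-G[w_{m-1}]$, together with the uniform bounds on $w_m$ and $\nabla w_m$ from the hypotheses of Theorem \ref{theorem section 3.2}. Second, and most delicately, the contributions $\kappa\,a_{m-1}$ and $\rho\,c_{m-1}$ carry no explicit factor of $\eta$, whereas the target rate $2^{-m}(1\wedge\eta^m)$ demands that each recursion step contribute a full factor $\tfrac12(1\wedge\eta)$; reconciling this—i.e.\ showing that, across the coupled value--gradient recursion, the ostensibly $O(1)$ driver contributions are subsumed into the geometric factor $2^{-m}$ while the genuinely jump-induced discrepancy retains its $\eta$-scaling—is the crux of the argument, and where the $(1/2)$-H\"older-in-time and boundedness hypotheses on $b,\sigma$ are used to keep all constants uniform in $m$.
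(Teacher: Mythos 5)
Your setup coincides with the paper's own proof: you recast \eqref{PIDE} so that $u$ solves the linearized equation with source $G[u]$, represent the Picard difference as $(u-w_m)(t,{\bf x})=\mathbb{E}\bigl[\int_t^T(G[u]-G[w_{m-1}])(s,\widetilde X_s^{t,{\bf x}})\,ds\bigr]$ (the paper combines \eqref{u_E} with \eqref{w_m_E} to the same effect), bound the compensator difference by $2\eta\,\|u-w_{m-1}\|_\infty$, and close the gradient term with a Bismut--Elworthy--Li type representation (the paper invokes \cite[Theorem 4.2]{Mall_rep}, with $\mathbb{E}[|N^{t,{\bf x}}_{s,t}|^2]\le C/(s-t)$), arriving at the same coupled value--gradient recursion as \eqref{lh_u_sup}--\eqref{lh_du_sup}.

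However, the proposal stops exactly at what you call the crux and never resolves it, and the mechanism you conjecture is not the right one. The factor $(1\land\eta^m)$ does \emph{not} come from the compensator difference: as you yourself observe, the contributions $\kappa\,a_{m-1}+\rho\,c_{m-1}$ from the $({\bf y},{\bf z})$-slots of $f$ carry no $\eta$, so no bookkeeping within your recursion can produce a per-step factor $\tfrac12(1\land\eta)$; the $(1/2)$-H\"older-in-time hypotheses do not supply it either. The paper's missing ingredient is \emph{time localization}: it first assumes $T<\delta\le 1$ and works with the weighted norm $\|u-w_m\|_\infty+\sqrt T\,\|\nabla(u-w_m)\|_\infty$, in which both the value and the gradient estimates share an overall contraction constant of order $C\,C_{\rm Lip}[f](1+2\eta)\sqrt T$. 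Since this constant vanishes as $T\to 0$, one chooses $\delta$ (depending on $\eta$) so small that $C\,C_{\rm Lip}[f](1+2\eta)\sqrt T\le\tfrac12(1\land\eta)$ whenever $T<\delta$; that is, the per-iteration factor $\tfrac12(1\land\eta)$ is manufactured by shrinking the horizon, not extracted from the jump term. Iterating then yields $2^{-m}(1\land\eta^m)$ times the initial error, which is finite by the assumed bounds on $u$, $\nabla u$, $w_0$, $\nabla w_0$. Finally, the result is extended to an arbitrary terminal time by partitioning $[0,T]$ into finitely many subintervals of length at most $\delta$ and applying the small-horizon estimate on each piece; the number of pieces depends on $\eta$ and $T$ but not on $m$, so it is absorbed into $C$. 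Without this localize-then-patch step, your coupled recursion need not even be a contraction for large $T$, let alone deliver the claimed $(1\land\eta^m)$ rate.
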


To summarize the theoretical developments so far, the established recursion for FBSDE with jumps is free from random jumps through linearized recursion, while still converging exponentially fast to the solution of original interest, supported by an error bound that vanishes with respect to lower jump intensities.
We stress that this linearization may enable the use of various numerical methods, such as approximation based on (semi-)analytic solutions, the iid Monte Carlo method, the least squares Monte Carlo method (Section \ref{subsection LSMC}), and neural network-based schemes (Section \ref{subsection neural network}), as appropriate.

\begin{remark}{\rm \label{remark bender denk 2}
If the jump component is completely suppressed from the FBSDE with jumps \eqref{FBSDE_with_jump} by setting the jump intensity $\lambda(\cdot,\cdot)\equiv 0$ (so that $\eta=0$), effectively dropping all terms involving random jumps and the compensator, then the recursion $\{w_m\}$ developed in this work remains valid and closely resembles the recursion proposed for FBSDEs without jumps \cite{Picard_it1}, but with a slight difference, which we clarify below. 
With the jump intensity suppressed, the recursion $\{w_m\}_{m\in\mathbb{N}_0}$ developed in \eqref{PIDE_0} and \eqref{PIDE_m} reduces to 
\[
\frac{\partial}{\partial s}w_0(s,{\bf x})
+\langle b(s,{\bf x}), \nabla w_0(s,{\bf x})\rangle+\frac{1}{2}{\rm tr}\left[(\sigma(s,{\bf x}))^{\otimes 2}{\rm Hess}(w_0(s,{\bf x}))\right]=0,
\]
and then, recursively for $m\in \mathbb{N}$,
\[
\frac{\partial}{\partial s}w_m(s,{\bf x})
+\left\langle b(s,{\bf x}), \nabla w_m(s,{\bf x})\right\rangle+\frac{1}{2}{\rm tr}\left[(\sigma(s,{\bf x}))^{\otimes 2}{\rm Hess}(w_m(s,{\bf x}))\right]
+f(s,{\bf x},w_{m-1}(s,{\bf x}),(\nabla w_{m-1} \sigma)(s,{\bf x}),0)=0,
\]
with $w_m(T,{\bf x})=\Phi({\bf x})$ for all $m\in\mathbb{N}_0$, along with their probabilistic representations \eqref{w_0_E} and \eqref{w_m_E} resulting in 
\[
w_0(t,{\bf x})=\mathbb{E}\left[\Phi(\widetilde{X}_T^{t,{\bf x}})\right],\quad w_m(t,{\bf x})=\mathbb{E}\left[\Phi(\widetilde{X}_T^{t,{\bf x}})+\int_t^T f(s,\widetilde{X}_s^{t,{\bf x}},w_{m-1}(s,\widetilde{X}_s^{t,{\bf x}}),(\nabla w_{m-1} \sigma)(s,\widetilde{X}_s^{t,{\bf x}}),0)ds \right],
\]
for $m\in\mathbb{N}$.
For comparison with the existing recursion developed in \cite{Picard_it1}, observe that the reduced expression for the recursion above can be represented in the form of a sequence of FBSDEs as $\widetilde{Y}_t^{t,{\bf x},m}=w_m(t,{\bf x})$ and $\widetilde{Z}_t^{t,{\bf x},m}=(\nabla w_m\sigma)(t,{\bf x})$, on the basis of the recursive definition for the (originally backward) component, as follows: 
\begin{equation}\label{FBSDE_without_jump_eta_0}
d\widetilde{Y}_s^{t,{\bf x},0}=\widetilde{Z}_s^{t,{\bf x},0}dB_s,\quad 
d\widetilde{Y}_s^{t,{\bf x},m}=-f(s,\widetilde{X}_s^{t,{\bf x}},\widetilde{Y}_s^{t,{\bf x},m-1},\widetilde{Z}_s^{t,{\bf x},m-1},0)ds 
+\widetilde{Z}_s^{t,{\bf x},m}dB_s,\quad s\in [t,T],
\end{equation}
with $\widetilde{Y}_T^{t,{\bf x},m}=\Phi(\widetilde{X}_T^{t,{\bf x}})$ for all $m\in\mathbb{N}_0$, upon the forward process formulated just as in \eqref{FBSDE_without_jump}, that is, $d\widetilde{X}_s^{t,{\bf x}} = b(s,\widetilde{X}_s^{t,{\bf x}})ds+ \sigma (s,\widetilde{X}_s^{t,{\bf x}})dB_s$ with $\widetilde{X}_t^{t,{\bf x}}={\bf x}$. 
That is to say, it has been found that the recursion \eqref{FBSDE_without_jump_eta_0} differs from the existing recursion developed in \cite{Picard_it1} only in its initial (zeroth) process, which can be represented as $d\widetilde{Y}_s^{t,{\bf x},0}=-f(s,\widetilde{X}_s^{t,{\bf x}},0,0,0)ds +\widetilde{Z}_s^{t,{\bf x},0}dB_s$ in  \cite{Picard_it1}.

Given that the only difference lies in the initial process with all subsequent iteration mechanisms in common, it is not surprising at all that there is no essential discrepancy in terms of convergence and its rate.
In order to avoid clouding the main scope of the proposed recursion, we have excluded the case of zero jump intensity (that is, $\eta=0$) from the statement of Theorem \ref{thm1}, whereas we derive its error estimate as $\sup_{(t,{\bf x})\in [0,T]\times \mathbb{R}^{d_{\bf x}}}\left|(u-w_m)(t,{\bf x})\right|\le C2^{-m}$ (simultaneously with the primary case of $\eta>0$) in the proof (Section \ref{section proofs}).
In essence, the proposed recursion can thus be considered a generalization of the one in \cite{Picard_it1} for handling additional random jumps.
\qed}\end{remark}

\section{The forward scheme with machine learning techniques}
\label{section forward scheme}

We have now established a recursive representation for the initial state $Y_t^{t,{\bf x}}$ of the FBSDE with jumps \eqref{FBSDE_with_jump} or, equivalently, the solution $u(t,{\bf x})$ to the PIDE \eqref{PIDE} in the form of a sequence of linear PDEs \eqref{PIDE_0} and \eqref{PIDE_m} or, equivalently, their probabilistic representations \eqref{w_0_E} and \eqref{w_m_E}.
The aim of the present section is to put the developed recursion into implementation in the form of a forward scheme with the aid of machine learning techniques.
In what follows, without loss of generality, we restrict ourselves to approximating the solution $u(0,{\bf x})$ at time zero, rather than $u(t,{\bf x})$ at the general time $t\in [0,T]$.

\subsection{Discretized recursion without nested conditional expectations}
\label{subsection discretized recursion}

Here, we construct a forward scheme by discretizing the forward process in the probabilistic representations \eqref{w_0_E} and \eqref{w_m_E}, thereby avoiding nested conditional expectations.
To this end, fix $n\in \mathbb{N}$ and ${\bf x}\in \mathbb{R}^{d_{\bf x}}$ as appropriate, and construct a discretization $\{\overline{X}_{t_k}^{(n)}\}_{k\in \{0,1,\cdots,n\}}$ of the forward process without jumps $\{\widetilde{X}_s^{0,{\bf x}}:\,s\in [0,T]\}$ formulated in \eqref{FBSDE_without_jump} on the equidistant time points $0=:t_0<t_1<\cdots<t_n:=T$ with $t_{k+1}-t_k\equiv T/n=:\Delta_{n}$, defined recursively as follows:
\begin{equation}\label{discretized forward process}
\overline{X}_{t_{k+1}}^{(n)}:=\overline{X}_{t_k}^{(n)}+\left[b(t_k,\overline{X}_{t_k}^{(n)})-\int_{\mathbb{R}^{d_{\bf z}}_0}h(t_k,\overline{X}_{t_k}^{(n)},{\bf z})\lambda(t_k,\overline{X}_{t_k}^{(n)})v(d{\bf z})\right]\Delta_n+\sigma(t_k,\overline{X}_{t_k}^{(n)})(B_{t_{k+1}}-B_{t_k}),
\end{equation}
for all $k\in \{0,1,\cdots,n-1\}$ with $\overline{X}_{t_0}^{(n)}:={\bf x}$.
Then, starting with ${\cal U}_{m,n}({\bf x})=\Phi({\bf x})$ and ${\cal V}_{m,n}({\bf x})=(\nabla \Phi \sigma)(t_n,{\bf x})$ for $m\in \mathbb{N}_0$, we wish to define functions ${\cal U}_{m,k}:\mathbb{R}^{d_{\bf x}}\rightarrow\mathbb{R}^{d_{\bf y}}$ and ${\cal V}_{m,k}:\mathbb{R}^{d_{\bf x}}\rightarrow\mathbb{R}^{d_{\bf y}\times d_0}$
for $(m,k)\in\mathbb{N}_0\times\{0,1,\cdots,n-1\}$, satisfying the following recursion in terms of conditional expectations:
\begin{equation}
{\cal U}_{0,k}(\overline{X}_{t_k}^{(n)})=\mathbb{E}\left[\Phi(\overline{X}_{t_{n}}^{(n)}) \,\big|{\cal F}_{t_k}\right],\quad
{\cal V}_{0,k}(\overline{X}_{t_k}^{(n)})=
\mathbb{E}\left[\Phi(\overline{X}_{t_{n}}^{(n)}) \frac{(B_{t_{k+1}}-B_{t_k})^{\top}}{\Delta_{n}}\,\Big|{\cal F}_{t_k}\right],\label{w0_E_n_m_steps_min}
\end{equation}
and then recursively for $m\in\mathbb{N}$,
\begin{equation}
{\cal U}_{m,k}(\overline{X}_{t_k}^{(n)})=\mathbb{E}\left[ \Phi(\overline{X}_{t_{n}}^{(n)}) +\sum_{i=k+1}^n\Delta_{n}f({\cal U}_{m-1,i}(\overline{X}_{t_i}^{(n)}),{\cal V}_{m-1,i}(\overline{X}_{t_i}^{(n)}),\gamma(t_i,\overline{X}_{t_i}^{(n)};{\cal U}_{m-1,i}))+\sum_{i=k+1}^n\Delta_{n}\gamma(t_i,\overline{X}_{t_i}^{(n)};{\cal U}_{m-1,i})
\,\Big|{\cal F}_{t_k}\right],\label{wm_E_n_m_steps_min}
\end{equation}
and
\begin{multline}
{\cal V}_{m,k}(\overline{X}_{t_k}^{(n)})=\mathbb{E}\Bigg[\Bigg(\Phi(\overline{X}_{t_{n}}^{(n)}) +\sum_{i=k+1}^n\Delta_{n}f({\cal U}_{m-1,i}(\overline{X}_{t_i}^{(n)}),{\cal V}_{m-1,i}(\overline{X}_{t_i}^{(n)}),\gamma(t_i,\overline{X}_{t_i}^{(n)};{\cal U}_{m-1,i}))\\
+\sum_{i=k+1}^n\Delta_{n}\gamma(t_i,\overline{X}_{t_i}^{(n)};{\cal U}_{m-1,i})\Bigg) \frac{(B_{t_{k+1}}-B_{t_k})^{\top}}{\Delta_{n}}\,\Big|{\cal F}_{t_k}\Bigg],\label{vm_E_n_m_steps_min}
\end{multline}
where ${\cal U}_{m,0}({\bf x})$ can be returned as an approximation of the quantity of interest $w_m(0,{\bf x})$.

Now that the discretized recursion \eqref{discretized forward process}-\eqref{vm_E_n_m_steps_min} avoids nested conditional expectations, it can be interpreted as a forward scheme \cite[Section 5]{10.1214/23-PS18}, provided that it is implementable. 
For illustrative purposes, we outline its flow in Figure \ref{Fig_Algo_Fwd_full}, where we denote by $\overline{\cal U}_{m,k}$ an approximation of the conditional expectation ${\cal U}_{m,k}$ when employing a suitable machine learning based estimator, which we construct shortly in Sections \ref{subsection LSMC} and \ref{subsection neural network}.

\begin{figure}[H]
\centering
\includegraphics[width=15cm]{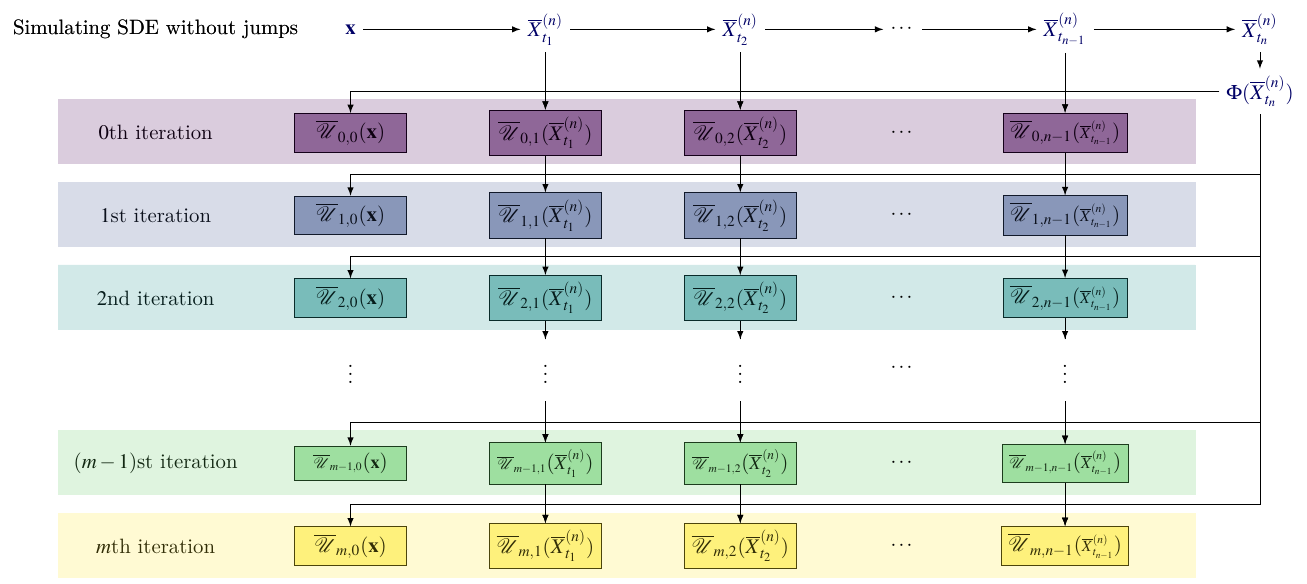}
\caption{Flow of the proposed forward scheme}
\label{Fig_Algo_Fwd_full}
\end{figure}

In a similar spirit to the forward scheme proposed in \cite{Picard_it1} (as outlined in Remark \ref{remark bender denk 2}), the advantage of the developed forward scheme is easily justifiable by the uniformly bounded approximation error, no matter how many subintervals we set up.
This feature stands in sharp contrast to typical backward schemes \cite[Section 3]{10.1214/23-PS18}, which we outline, as follows.

\noindent {\bf Forward scheme:}
In the absence of nested conditional expectations (as illustrated in Figure \ref{Fig_Algo_Fwd_loc}), the approximation error of the proposed forward scheme can be estimated as
\begin{equation}\label{do not propagate}
  \sup_{k\in \{0,1,\cdots,n\}}\mathbb{E}\left[\left|{\cal U}_{m,k}(\overline{X}_{t_k}^{(n)})-\overline{\cal U}_{m,k}(\overline{X}_{t_k}^{(n)})\right|^2\right]\leq C,\quad n\in \mathbb{N},
\end{equation}
for some positive constant $C$.
Note that in Figure \ref{Fig_Algo_Fwd_loc}, we have denoted by 
$\widehat{\mathbb{E}}[\cdot|{\cal F}_{t_k}]$ a suitable approximation of the conditional expectation $\mathbb{E}[\cdot|{\cal F}_{t_k}]$ appearing in the discretized recursion \eqref{w0_E_n_m_steps_min}-\eqref{vm_E_n_m_steps_min}.

\begin{figure}[H]
\centering
\includegraphics[width=15cm]{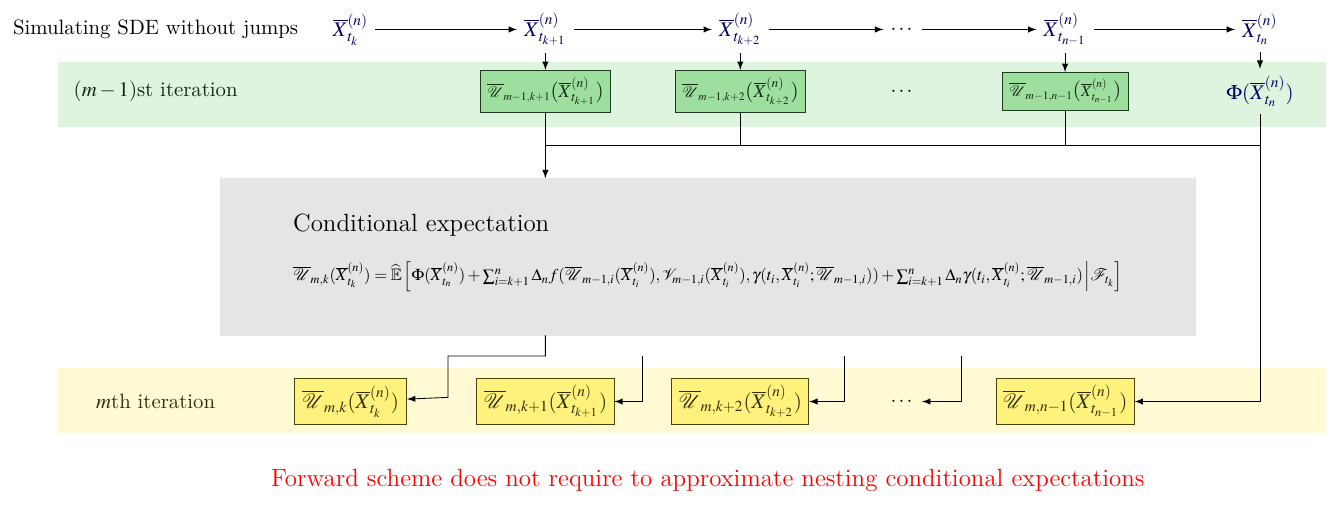}
\caption{Illustration of the absence of nested conditional expectations in the proposed forward scheme}
\label{Fig_Algo_Fwd_loc}
\end{figure}


\noindent
{\bf Backward scheme:}
In the presence of nested conditional expectations (as indicated in Figure \ref{Fig_Algo_Bwd_loc}), the approximation error is typically estimated as
\begin{equation}\label{explodes along subintervals}
  \sup_{k\in \{0,1,\cdots,n\}}\mathbb{E}\left[\left|{\cal U}_{\infty,k}(\overline{X}_{t_k}^{(n)})-\overline{\cal U}^{{\rm Euler}}_{\infty,k}(\overline{X}_{t_k}^{(n)})\right|^2\right]\leq Cn,\quad n\in\mathbb{N},
\end{equation}
for some positive constant $C$, where we have broadly followed the notation adopted in \cite[Section 3]{Picard_it1}, that is, ${\cal U}_{\infty,k}$ corresponds to the conditional expectation defined recursively in a backward manner as
\begin{equation}
{\cal U}_{\infty,k}(\overline{X}_{t_k}^{(n)}):=\mathbb{E}\left[ {\cal U}_{\infty,k+1}(\overline{X}_{t_{k+1}}^{(n)}) +\Delta_{n}f({\cal U}_{\infty,k+1}(\overline{X}_{t_{k+1}}^{(n)}),{\cal V}_{\infty,k+1}(\overline{X}_{t_{k+1}}^{(n)}),\gamma(t_{k+1},\overline{X}_{t_{k+1}}^{(n)};{\cal U}_{\infty,{k+1}}))+\Delta_{n}\gamma(t_{k+1},\overline{X}_{t_{k+1}}^{(n)};{\cal U}_{\infty,{k+1}})
\,\Big|{\cal F}_{t_k}\right],\label{u_inf}
\end{equation}
and
\begin{multline}
{\cal V}_{\infty,k}(\overline{X}_{t_k}^{(n)}):=\mathbb{E}\Bigg[\Bigg( {\cal U}_{\infty,k+1}(\overline{X}_{t_{k+1}}^{(n)})+\Delta_{n}f({\cal U}_{\infty,k+1}(\overline{X}_{t_{k+1}}^{(n)}),{\cal V}_{\infty,k+1}(\overline{X}_{t_{k+1}}^{(n)}),\gamma(t_{k+1},\overline{X}_{t_{k+1}}^{(n)};{\cal U}_{\infty,k+1}))\\
+\Delta_{n}\gamma(t_{k+1},\overline{X}_{t_{k+1}}^{(n)};{\cal U}_{\infty,k+1})\Bigg) \frac{(B_{t_{k+1}}-B_{t_k})^{\top}}{\Delta_{n}}\,\Big|{\cal F}_{t_k}\Bigg],\label{v_inf}
\end{multline}
for $k\in\{n-1,\cdots,0,1\}$, while $\overline{\cal U}_{\infty, k}^{\rm Euler}$ represents an approximation of the conditional expectation ${\cal U}_{\infty,k}$, with $\widehat{\mathbb{E}}[\cdot|{\cal F}_{t_k}]$ in Figure \ref{Fig_Algo_Bwd_loc} approximating $\mathbb{E}[\cdot|{\cal F}_{t_k}]$ in \eqref{u_inf}-\eqref{v_inf}.
\begin{figure}[H]
\centering
\includegraphics[width=15cm]{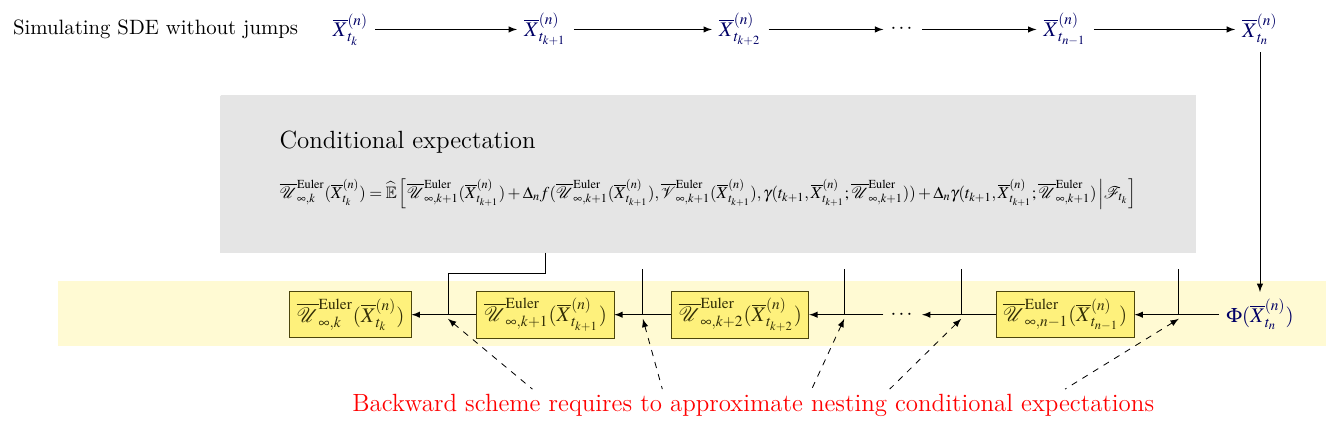}
\caption{Illustration of the presence of nested conditional expectations in a typical backward Euler scheme}
\label{Fig_Algo_Bwd_loc}
\end{figure}

In summary, the approximation error of the forward scheme does not get propagated along subintervals (as indicated through a constant upper bound in \eqref{do not propagate}), whereas in general, the approximation error of the backward scheme itself, not merely its upper bound, explodes as the number of subintervals grows (as implied in \eqref{explodes along subintervals}).
To avoid digressing too much from the main topic, we do not go into further details here, but refer the reader to \cite[Section 3]{Picard_it1}.

In the next two subsections, we describe two machine learning techniques for implementing the discretized recursion \eqref{w0_E_n_m_steps_min}-\eqref{vm_E_n_m_steps_min} in the form of minimization problems for approximating the conditional expectations involved.
In what follows, we indicate by $a\in \argmin$ that $a$ is one of the arguments that minimize a given function.

\subsection{Implementation by least squares Monte Carlo}
\label{subsection LSMC}

Here, we describe the least squares Monte Carlo method for implementing the forward scheme \eqref{w0_E_n_m_steps_min}-\eqref{vm_E_n_m_steps_min} (for instance, \cite{Picard_it1} and \cite[Section 4.1]{10.1214/23-PS18} for details). 
With $J\in\mathbb{N}$ fixed, let $\{v_j^{\mathrm{basis}}\}_{j\in \{1,2,\cdots,J\}}$ be a sequence of basis functions from $\mathbb{R}^{d_{\bf x}}$ to $\mathbb{R}$. 
Starting with ${\cal U}_{m,n}^{\mathrm{LS}}({\bf x})=\Phi({\bf x})$ and ${\cal V}_{m,n}^{\rm LS}({\bf x})=(\nabla \Phi \sigma)(t_n,{\bf x})$ for $m\in\mathbb{N}_0$, we approximate the condition expectations appearing in the recursion \eqref{w0_E_n_m_steps_min}-\eqref{vm_E_n_m_steps_min} with a linear combination of basis functions evaluated at $\overline{X}^{(n)}_{t_k}$, as follows:
for $k\in \{n-1,\cdots,0\}$,
\begin{gather}
\{\beta_{0,k,j}^{{\cal U}}\}_{j\in \{1,\cdots,J\}}\in \argmin_{\{\beta_{j}\}_{j\in \{1,\cdots,J\}}\subset \mathbb{R}^{d_{\bf y}}}\mathbb{E}\left[\left|\sum\nolimits_{j=1}^J\beta_jv_j^{\mathrm{basis}}(\overline{X}_{t_k}^{(n)})- \Phi(\overline{X}_{t_n}^{(n)})\right|^2\right],\label{w0_E_n_LS}\\
\{\beta_{0,k,j}^{{\cal V}}\}_{j\in \{1,\cdots,J\}}\in \argmin_{\{\beta_{j}\}_{j\in \{1,\cdots,J\}}\subset\mathbb{R}^{d_{\bf y}\times d_0}}\mathbb{E}\left[\left|\sum\nolimits_{j=1}^J\beta_jv_j^{\mathrm{basis}}(\overline{X}_{t_k}^{(n)})-\Phi(\overline{X}_{t_{n}}^{(n)}) \frac{(B_{t_{k+1}}-B_{t_k})^{\top}}{\Delta_{n}}\right|^2\right],\label{v0_E_n_LS}\\
{\cal U}_{0,k}^{\mathrm{LS}}(\overline{X}_{t_k}^{(n)})=\sum_{j=1}^J\beta_{0,k,j}^{{\cal U}}v_j^{\mathrm{basis}}(\overline{X}_{t_k}^{(n)}),\quad 
{\cal V}_{0,k}^{\mathrm{LS}}(\overline{X}_{t_k}^{(n)})=\sum_{j=1}^J\beta_{0,k,j}^{{\cal V}}v_j^{\mathrm{basis}}(\overline{X}_{t_k}^{(n)}),\nonumber
\end{gather}
and, then recursively for $m\in\mathbb{N}$ and $k\in \{n-1,\cdots,0\}$,
\begin{align}
&\{\beta_{m,k,j}^{{\cal U}}\}_{j\in \{1,\cdots,J\}}\in \argmin_{\{\beta_{j}\}_{j\in \{1,\cdots,J\}}\subset\mathbb{R}^{d_{\bf y}}}\mathbb{E}\bigg[\bigg|\sum_{j=1}^J\beta_{j}v_j^{\mathrm{basis}}(\overline{X}_{t_k}^{(n)})\nonumber\\
&\qquad
-\bigg(\Phi(\overline{X}_{t_{n}}^{(n)}) +\sum_{i=k+1}^{n}\Delta_{n}f({\cal U}_{m-1,i}^{\mathrm{LS}}(\overline{X}_{t_{i}}^{(n)}),{\cal V}_{m-1,i}^{\mathrm{LS}}(\overline{X}_{t_{i}}^{(n)}),\gamma(t_{i},\overline{X}_{t_{i}}^{(n)};{\cal U}_{m-1,i}^{\mathrm{LS}}))
+\sum_{i=k+1}^{n}\Delta_{n}\gamma(t_{i},\overline{X}_{t_{i}}^{(n)};{\cal U}_{m-1,i}^{\mathrm{LS}})
\bigg)\bigg|^2\bigg],\label{wm_E_n_LS}\\
&\{\beta_{m,k,j}^{{\cal V}}\}_{j\in \{1,\cdots,J\}}\in \argmin_{\{\beta_{j}\}_{j\in \{1,\cdots,J\}}\subset\mathbb{R}^{d_{\bf y}\times d_0}}\mathbb{E}\bigg[\bigg|\sum_{j=1}^J\beta_jv_{j}^{\mathrm{basis}}(\overline{X}_{t_k}^{(n)})\nonumber\\
&\qquad -\bigg( \Phi(\overline{X}_{t_{n}}^{(n)}) +\sum_{i=k+1}^{n}\Delta_{n}f({\cal U}_{m-1,i}^{\mathrm{LS}}(\overline{X}_{t_{i}}^{(n)}),{\cal V}_{m-1,i}^{\mathrm{LS}}(\overline{X}_{t_{i}}^{(n)}),\gamma(t_{i},\overline{X}_{t_{i}}^{(n)};{\cal U}_{m-1,i}^{\mathrm{LS}}))+\sum_{i=k+1}^{n}\Delta_{n}\gamma(t_{i},\overline{X}_{t_{i}}^{(n)};{\cal U}_{m-1,i}^{\mathrm{LS}})
\bigg)\frac{(B_{t_{k+1}}-B_{t_k})^{\top}}{\Delta_{n}}\bigg|^2\bigg],\label{vm_E_n_LS}\\
&{\cal U}_{m,k}^{\mathrm{LS}}(\overline{X}_{t_k}^{(n)})=\sum_{j=1}^J\beta_{m,k,j}^{{\cal U}}v_j^{\mathrm{basis}}(\overline{X}_{t_k}^{(n)}),\quad {\cal V}_{m,k}^{\mathrm{LS}}(\overline{X}_{t_k}^{(n)})=\sum_{j=1}^J\beta_{m,k,j}^{{\cal V}}v_j^{\mathrm{basis}}(\overline{X}_{t_k}^{(n)}).\nonumber
\end{align}
Then, we return the value ${\cal U}_{m,0}^{\mathrm{LS}}({\bf x})$ as an approximation of $w_m(0,{\bf x})$.
For the sake of convenience in coding, we provide a pseudocode for the least squares Monte Carlo method above as Algorithm \ref{Algo_LSMC} in the supplementary materials.

\subsection{Implementation by neural networks}
\label{subsection neural network}

For exceptionally large problem dimensions, especially $d_{\bf x}$, the least squares Monte Carlo method of Section \ref{subsection LSMC} becomes infeasible due to its substantial memory requirements.
For such high-dimensional problems, it is often effective to employ neural networks (for instance, \cite{doi:10.1137/19M1297919} and \cite[Section 6]{10.1214/23-PS18} for details) for implementing the recursion \eqref{w0_E_n_m_steps_min}-\eqref{vm_E_n_m_steps_min}.

Here, we introduce a neural network architecture of the scheme.
We define a Rectified Linear Unit (ReLU) activation function by ${\cal L}_{r}({\bf x}):=( (x_1)^+, \cdots, (x_r)^+)^{\top}$ for ${\bf x}\in \mathbb{R}^r$. 
With $\nu\in\mathbb{N}$ fixed, let  $p,\ell \in \mathbb{N}$, $q \in \mathbb{N}_0$, and $\theta=(\theta_{1},\cdots,\theta_{\nu}) \in \mathbb{R}^\nu$ such that $q+\ell p+\ell  \leq \nu$.
We define 
\[
A^{\theta,q}_{p,\ell}({\bf x})
:=\begin{bmatrix} 
\theta_{q+1} & \cdots & \theta_{q+p} \\
\vdots & \ddots & \vdots \\
\theta_{q+(\ell-1)p+1} & \cdots & \theta_{q+(\ell-1) p+p} 
\end{bmatrix}
{\bf x}+ 
\begin{bmatrix}
\theta_{q+\ell p+1}  \\
\vdots  \\
\theta_{q+\ell p+\ell}  
\end{bmatrix},\quad {\bf x}=\begin{bmatrix} 
x_1  \\
\vdots  \\
x_p  
\end{bmatrix}.
\]
Note that we have above prepared the parameter $\theta$ with $v$ components, instead of 
the minimal necessary $(\ell p+\ell)$ components of $(\theta_{q+1},\ldots,\theta_{q+\ell p+\ell})$, 
for simplicity of notation.

Let $s\in\{3,4,\cdots\}$ denote the number of layers (excluding the input layer) and let $d_k\in\mathbb{N}$ represent the number of neurons in the $k$ layer for $k\in \{0,1,\cdots,s\}$, satisfying $\sum_{k=1}^{s}d_k(d_{k-1}+1)\leq \nu$.
With those, we define a function ${\rm NN}:\mathbb{R}^{d_0}\rightarrow\mathbb{R}^{d_s}$ with the parameter $\theta\in\mathbb{R}^\nu$ by
\[
{\rm NN}({\bf x};\theta)
:=\bigg(A_{d_{s-1},d_s}^{\theta,\sum_{k=1}^{s-1}d_{k}(d_{k-1}+1)} \circ {\cal L}_{d_{s-1}} \circ A_{d_{s-2},d_{s-1}}^{\theta,\sum_{k=1}^{s-2}d_{k}(d_{k-1}+1)} \circ \cdots \circ {\cal L}_{d_2} \circ A_{d_1,d_2}^{\theta,d_1(d_0+1)} \circ 
{\cal L}_{d_1} \circ A_{d_0,d_1}^{\theta,0}\bigg)({\bf x}),\ \ \ \ {\bf x}\in\mathbb{R}^{d_0},
\]
that we call the neural network with the parameter $\theta$, composed of $(s+1)$ layers ($1$ input layer with $d_0$ neurons, $k$ hidden layers with $d_k$ neurons for each $k\in \{1,\cdots,s-1\}$ and $1$ output layer with $d_s$ neurons).
In the present context, we prepare two such neural networks, one for the components involving $\mathcal{U}$ appearing in \eqref{w0_E_n_m_steps_min} and \eqref{wm_E_n_m_steps_min}, while the other for $\mathcal{V}$ in \eqref{w0_E_n_m_steps_min} and \eqref{vm_E_n_m_steps_min}.
For the former, we have ${\rm NN}^{{\cal U}}(\cdot;\theta)$ with $d^{\cal U}_0=d_{\bf x}$, $d^{\cal U}_{s}=d_{\bf y}$, and $d^{\cal U}_k$ for $k\in\{n-1,\cdots,0\}$ satisfying $\sum_{k=1}^{s}d_k^{\cal U}(d_{k-1}^{\cal U}+1)\leq \nu$.
In a similar manner, for the latter, we have ${\rm NN}^{{\cal V}}(\cdot;\theta)$ with $d^{\cal V}_0=d_{\bf x}$, $d^{\cal V}_s=d_{\bf y}\times d_0$, and $d^{\cal V}_k\in \mathbb{N}$ for $k\in \{1,\cdots,s-1\}$, satisfying $\sum_{k=1}^{s}d_k^{\cal V}(d_{k-1}^{\cal V}+1)\leq \nu$.

With those neural network structures, we set up a forward scheme with ${\cal U}_{m,n}^{\mathrm{NN}}({\bf x})=\Phi({\bf x})$ and ${\cal V}_{m,n}^{\mathrm{NN}}({\bf x})=(\nabla \Phi \sigma)(t_n,{\bf x})$ for $m\in\mathbb{N}_0$, as follows: for $k\in \{0,\cdots,n-1\}$,
\begin{gather}
\Theta^{{\cal U}}_{0,k}\in \argmin_{\theta\in\mathbb{R}^\nu}\mathbb{E}\left[\left|{\rm NN}^{{\cal U}}(\overline{X}_{t_k}^{(n)};\theta)- \Phi(\overline{X}_{t_{n}}^{(n)}) \right|^2\right],\label{w0_E_n_NN}\\
\Theta^{{\cal V}}_{0,k}\in \argmin_{\theta\in\mathbb{R}^{\nu}}\mathbb{E}\left[\left|{\rm NN}^{{\cal V}}(\overline{X}_{t_k}^{(n)};\theta)- \Phi(\overline{X}_{t_{n}}^{(n)})\frac{(B_{t_{k+1}}-B_{t_k})^{\top}}{\Delta_{n}}\right|^2\right],\label{v0_E_n_NN}\\
{\cal U}_{0,k}^{\mathrm{NN}}(\overline{X}_{t_k}^{(n)})={\rm NN}^{{\cal U}}(\overline{X}_{t_k}^{(n)};\Theta_{0,k}^{{\cal U}}),\quad {\cal V}_{0,k}^{\mathrm{NN}}(\overline{X}_{t_k}^{(n)})={\rm NN}^{{\cal V}}(\overline{X}_{t_k}^{(n)};\Theta_{0,k}^{{\cal V}}),\nonumber
\end{gather}
and then, recursively for $m\in\mathbb{N}$ and $k\in \{n-1,\cdots,0\}$,
\begin{align}
&\Theta^{{\cal U}}_{m,k}\in \argmin_{\theta\in\mathbb{R}^\nu}\mathbb{E}\bigg[\bigg|{\rm NN}^{{\cal U}}(\overline{X}_{t_k}^{(n)};\theta)-\bigg(\Phi(\overline{X}_{t_{n}}^{(n)}) +\sum_{i=k+1}^{n}\Delta_{n}f({\cal U}_{m-1,i}^{\mathrm{NN}}(\overline{X}_{t_{i}}^{(n)}),{\cal V}_{m-1,i}^{\mathrm{NN}}(\overline{X}_{t_{i}}^{(n)}),\gamma(t_{i},\overline{X}_{t_{i}}^{(n)};{\cal U}_{m-1,i}^{\mathrm{NN}}))\nonumber\\
&\qquad \qquad +\sum_{i=k+1}^{n}\Delta_{n}\gamma(t_{i},\overline{X}_{t_{i}}^{(n)};{\cal U}_{m-1,i}^{\mathrm{NN}})
\bigg)\bigg|^2\bigg],\label{wm_E_n_NN}\\
&\Theta^{{\cal V}}_{m,k}\in \argmin_{\theta\in\mathbb{R}^{\nu}}
\mathbb{E}\bigg[\bigg|{\rm NN}^{{\cal V}}(\overline{X}_{t_k}^{(n)};\theta)-\bigg(\Phi(\overline{X}_{t_{n}}^{(n)}) +\sum_{i=k+1}^{n}\Delta_{n}f({\cal U}_{m-1,i}^{\mathrm{NN}}(\overline{X}_{t_{i}}^{(n)}),{\cal V}_{m-1,i}^{\mathrm{NN}}(\overline{X}_{t_{i}}^{(n)}),\gamma(t_{i},\overline{X}_{t_{i}}^{(n)};{\cal U}_{m-1,i}^{\mathrm{NN}}))\nonumber\\
&\qquad \qquad +\sum_{i=k+1}^{n}\Delta_{n}\gamma(t_{i},\overline{X}_{t_{i}}^{(n)};{\cal U}_{m-1,i}^{\mathrm{NN}})
\bigg)\frac{(B_{t_{k+1}}-B_{t_k})^{\top}}{\Delta_{n}}\bigg|^2\bigg],\label{vm_E_n_NN}\\
&{\cal U}_{m,k}^{\mathrm{NN}}(\overline{X}_{t_k}^{(n)})={\rm NN}^{{\cal U}}(\overline{X}_{t_k}^{(n)};\Theta_{m,k}^{{\cal U}}),\quad {\cal V}_{m,k}^{\mathrm{NN}}(\overline{X}_{t_k}^{(n)})={\rm NN}^{{\cal V}}(\overline{X}_{t_k}^{(n)};\Theta_{m,k}^{{\cal V}}).\nonumber
\end{align}
Then, we return the value ${\cal U}_{m,0}^{\mathrm{NN}}({\bf x})$ as an approximation of $w_m(0,{\bf x})$.
For the sake of convenience in coding, we provide a pseudocode for the neural network-based scheme above as Algorithm \ref{Algo_NN} in the supplementary materials.

\begin{remark}\label{remark bender denk 3}{\rm
It is worth noting, in continuation of Remark \ref{remark bender denk 2}, that the developed neural network-based scheme \eqref{w0_E_n_NN}-\eqref{vm_E_n_NN} can also accommodate the implementation of the existing forward method for standard FBSDEs without jumps \cite{Picard_it1}.
Given this capability, the above machine learning-based implementation introduces extra novelty by handling a broad range of forward schemes for FBSDEs, covering not only those with jumps but notably also those without jumps.
\qed}\end{remark}

Before closing this section, let us contrast the two frameworks from a practical perspective.
As mentioned, on the one hand, the least squares Monte Carlo is effective in low dimensional problems.
It however becomes prohibitive for high-dimensional cases (roughly, $d_{\bf x}>15$ in the present context), due to its substantial memory requirements.
On the other hand, the neural network-based scheme is not always advantageous, either.
Specifically, it often does not yield benefits for low-dimensional problems due to the significant time required for the initial setup of the neural networks, no matter how small the problem dimension is.
Hence, in the following numerical illustrations, we exclusively employ the former for low-dimensional cases and the latter for high-dimensional ones.



\section{Numerical illustrations}
\label{section numerical illustrations}

To verify the validity of the recursive representation (Section \ref{section recursive representation}) and demonstrate the effectiveness of the developed forward scheme and its machine learning-based implementation (Section \ref{section forward scheme}), particularly in high-dimensional settings, we provide a diverse yet specifically targeted range of problem scenarios.
Without loss of generality, we continue to focus on the approximation of the solution at time zero (that is, $Y_0^{0,{\bf x}}=u(0,{\bf x})$ in accordance with the identity \eqref{nFK_jump}).
All numerical experiments below are conducted in Python on two GPUs of NVIDIA RTX A6000 48GB with NVLink, on a system with an AMD EPYC 7402P 2.80GHz 24-core CPU with 128GB DDR4-3200 memory running on Ubuntu 23.10.

To ease the notation, we write ${\bf x}=(x_1,\cdots,x_d)^{\top}$ and ${\bf z}=(z_1,\cdots,z_d)^{\top}$, depending on the their dimensions $d$. 
Also, we write the average of a vector in a matrix operation form, for instance, $d^{-1}\sum_{k=1}^d x_k=\langle {\bf x},\mathbbm{1}_d/d\rangle$.
Recall that $\mathbb{I}_d$ and $\mathbbm{1}_d$ represent the identity matrix of order $d$ and the $d$-dimensional vector with all unit-valued components, respectively.

\subsection{First example}
\label{subsection first example}


First, we examine the recursive representation (Section \ref{section recursive representation}) in the context of the Merton jump-diffusion model \cite{merton1976option} capitalizing its semi-analytic formula (to be presented shortly in \eqref{Merton_wm}), so that the experiment can be conducted without worrying about significant numerical errors.

In alignment with the formulation \eqref{FBSDE_with_jump}, we set $d_{\bf x}=d_0=d_{\bf z}=d$ for some $d\in \mathbb{N}$, $d_{\bf y}=1$, 
$h({\bf x},{\bf z}):=(x_1(e^{z_1}-1),\cdots,x_d(e^{z_d}-1))^{\top}$, $\lambda(s,{\bf x})\equiv \lambda$ for some $\lambda>0$, $\nu(d{\bf z})=(2\pi \det(\Sigma_J))^{-d/2}e^{-\langle {\bf z}-\mu_J, \Sigma_J^{-1}({\bf z}-\mu_J)\rangle/2}d{\bf z}$ with $\mu_J=(\mu_J^1,\cdots,\mu_J^d)^{\top}\in\mathbb{R}^d$ and $\sigma_J=(\sigma_J^{ij})_{i,j\in \{1,\cdots,d\}}\in \mathbb{R}^{d\times d}$, such that $\Sigma_J:=\sigma_J^{\otimes 2}$ is invertible.
The model of interest is then described as follows: for $s\in [0,T]$,
\begin{equation}\label{d_dim_Merton}
\begin{dcases}
dX_s^{0,{\bf x}} = r X_s^{0,{\bf x}} ds+{\rm diag}(X_s^{0,{\bf x}})\sigma dB_s+ \int_{\mathbb{R}^d_0}h(X_{s-}^{0,{\bf x}},{\bf z})\widetilde{\mu}(d{\bf z},ds),\\
dY_s^{0,{\bf x}}=(rY_s^{0,{\bf x}}+c(Y_s^{0,{\bf x}})^+)ds+Z_s^{0,{\bf x}}dB_s+ \int_{\mathbb{R}^d_0}U_{s-}^{0,{\bf x}}({\bf z})\widetilde{\mu}(d{\bf z},ds),\\
X_0^{0,{\bf x}}= {\bf x}\in\mathbb{R}^d,\quad Y_T^{0,{\bf x}}=\Phi(X_T^{0,{\bf x}}),
\end{dcases}
\end{equation}
with $X_0^{0,{\bf x}}= {\bf x}\in\mathbb{R}^d$, $Y_T^{0,{\bf x}}=\Phi(X_T^{0,{\bf x}})$, $r>0$, $\sigma\in\mathbb{R}^{d\times d}$, $c>0$, and $\Phi({\bf x})=(K-\langle {\bf x},\mathbbm{1}_d/d\rangle)^+$ for some $K>0$, where the constant $c$ represents the default intensity of the counterparty.
This formulation corresponds to the multi-dimensional Merton jump-diffusion model \cite{doi:10.1080/00207160.2015.1070838, doi:10.1080/17442508.2010.515309}, whose jump times are governed by a single Poisson process (due to $\lambda(s,{\bf x})\equiv \lambda$) to describe a contagious jump effect on all involving assets (with possibly distinct jump sizes for each asset, governed by the finite L\'evy measure $\nu(d{\bf z})$).
In accordance with the formulation \eqref{PIDE}, the PIDE corresponding to the FBSDE with jumps \eqref{d_dim_Merton} is given as follows: for $(s,{\bf x})\in [0,T)\times \mathbb{R}^d$,
\begin{multline*}
\frac{\partial}{\partial s}u(s,{\bf x})+\langle r{\bf x},\nabla u(s,{\bf x})\rangle +\frac{1}{2}{\rm tr}\left[({\rm diag}({\bf x})\sigma)^{\otimes 2}\mathrm{Hess}(u(s,{\bf x}))\right]\\
+\int_{\mathbb{R}^d_0}(u(s,{\bf x}+h({\bf x},{\bf z}))-u(s,{\bf x})-\langle h({\bf x},{\bf z}),\nabla u(s,{\bf x})\rangle)\lambda\nu(d{\bf z})-ru(s,{\bf x})-c(u(s,{\bf x}))^+=0,
\end{multline*}
with $u(T,{\bf x})=\Phi({\bf x})$.

Next, let $b({\bf x}):=((r-\lambda (e^{\mu_J^1+(\sigma_J^{11})^2/2}-1))x_1,\cdots,(r-\lambda (e^{\mu_J^d+(\sigma_J^{dd})^2/2}-1))x_d)^{\top}$, where the terms $\lambda(e^{\mu_J^k+(\sigma_J^{kk})^2/2}-1)x_k$ come from $\int_{\mathbb{R}^d_0}h({\bf x},{\bf z})\lambda v(d{\bf z})=(\lambda (e^{\mu_J^1+(\sigma_J^{11})^2/2}-1)x_1,\cdots,\lambda(e^{\mu_J^d+(\sigma_J^{dd})^2/2}-1)x_d)^{\top}$, with which the recursion $\{w_m\}_{m\in\mathbb{N}_0}$ developed in \eqref{PIDE_0} and \eqref{PIDE_m} can be expressed in the form of linear PDEs, as follows: for $(s,{\bf x})\in [0,T)\times \mathbb{R}^d$,
\begin{equation}
\frac{\partial}{\partial s}w_0(s,{\bf x})+\langle 
b({\bf x}),\nabla w_0(s,{\bf x})\rangle+\frac{1}{2}{\rm tr}\left[({\rm diag}({\bf x})\sigma)^{\otimes 2}\mathrm{Hess}(w_0(s,{\bf x}))\right]=0,\label{PIDE_d_Merton_w0}
\end{equation}
and then, recursively for $m\in \mathbb{N}$,
\begin{multline}
\frac{\partial}{\partial s}w_m(s,{\bf x})+
\langle 
b({\bf x}),\nabla w_m(s,{\bf x})\rangle
+\frac{1}{2}{\rm tr}\left[(({\rm diag}({\bf x})\sigma)^{\otimes 2}\mathrm{Hess}(w_m(s,{\bf x}))\right]\\
+\int_{\mathbb{R}^d_0}w_{m-1}(s,{\bf x}+h({\bf x},{\bf z}))\lambda\nu(d{\bf z})-(\lambda+r)w_{m-1}(s,{\bf x})-c(w_{m-1}(s,{\bf x}))^+=0,\label{PIDE_d_Merton_wm}
\end{multline}
along with $w_m(T,{\bf x})=\Phi({\bf x})$ for all $m\in \mathbb{N}_0$.
In light of the FBSDE without jumps \eqref{FBSDE_without_jump} and Theorem \ref{theorem section 3.2}, the linear PDEs \eqref{PIDE_d_Merton_w0} and \eqref{PIDE_d_Merton_wm} admit the following probabilistic representations:
\begin{equation}
    w_0(0,{\bf x})=\mathbb{E}\left[\Phi(\widetilde{X}_T^{0,{\bf x}} )\right],
\label{Picard_itr_for_d_CVA1}
\end{equation}
where $d\widetilde{X}_s^{0,{\bf x}} = b(\widetilde{X}_s^{0,{\bf x}} )ds + {\rm diag}(\widetilde{X}_s^{0,{\bf x}} )\sigma dB_s$ with $\widetilde{X}_0^{0,{\bf x}} ={\bf x}\in\mathbb{R}^d$, and then for $m\in\mathbb{N}$,
\begin{align}
w_m(0,{\bf x})
&=\sum_{k=0}^m\mathbb{E}\left[\Phi(\widetilde{X}_T^{0,{\bf x}+h({\bf x},Z_1+\cdots+Z_k)})\right] \frac{(\lambda T)^k}{k!}\sum_{n=0}^{m-k}(c+r+\lambda)^n\frac{(-T)^n}{n!},\nonumber\\
&=\sum_{k=0}^m\mathbb{E}\left[w_0(0,{\bf x}+h({\bf x},Z_1+\cdots+Z_k))\right] \frac{(\lambda T)^k}{k!}\sum_{n=0}^{m-k}(c+r+\lambda)^n\frac{(-T)^n}{n!},
\label{Merton_wm}
\end{align}
where $\{Z_k\}_{k\in \mathbb{N}}$ here is a sequence of iid normal random vectors with mean $\mu_J$ and covariance matrix $\Sigma_J$.

It is worth noting that the representation \eqref{Merton_wm} offers an interesting interpretation on its own. 
On the one hand, in the one-dimensional case and in the absence of nonlinearity in the driver (that is, $c=0$), the solution (that is, $u(0,x)=e^{-rT}\mathbb{E}[\Phi(X_T^{0,x})]$) corresponds to the 
European price (which we denote by $\mathrm{Merton}$ below), which can be written as the sum of the plain-vanilla prices on the standard Black-Scholes model (say, $\mathrm{BS}$), as follows \cite{merton1976option}:
\begin{equation}\label{Merton_formula}
\mathrm{Merton}(x,K,T,r,\sigma,\lambda,\mu_J,\sigma_J)=\sum_{k=0}^{+\infty}\mathrm{BS}(x, K, T, r_k, \sigma_k)e^{-e^{\mu_J+\sigma_J^2/2}\lambda T}\frac{(e^{\mu_J+\sigma_J^2/2}\lambda T)^k}{k!},
\end{equation}
with $r_k=r_k(T,r,\lambda,\mu_J,\sigma_J):= r - \lambda(e^{\mu_J+\sigma_J^2/2}-1) + k(\mu_J+\sigma_J^2/2)/ T$ and $\sigma_k=\sigma_k(T,\sigma,\sigma_J):= (\sigma^2 + k\sigma_J^2/ T)^{1/2}$ for $k\in\mathbb{N}_0$.
If multi-dimensional or in the presence of nonlinearity in the driver (that is, $c>0$), on the other hand, there does not appear to be any (semi-)analytic expression available for the solution $u(0,{\bf x})$.
Therefore, the representation \eqref{Merton_wm} can be thought of as a semi-analytic formula for the European price under the multi-dimensional Merton-jump diffusion model with the nonlinear driver, where jumps are fully decoupled at each and every step.

In the numerical experiments below, we present the reference value $u_{\rm ref}(0,{\bf x})$ on the basis of following representation:
\begin{equation}
u(0,{\bf x})
=\mathbb{E}\left[\Phi(X_T^{0,{\bf x}})-\int_0^T(r+c)u(s,X_s^{0,{\bf x}})ds\right]=e^{-(r+c)T}\mathbb{E}\left[\Phi(X_T^{0,{\bf x}})\right]\label{CVA_ref_E},
\end{equation}
where the driver term $ru+c(u)_+$ has been reduced to $(r+c)u$, due to $u(s,{\bf x})\ge 0$ for all $(s,{\bf x})\in[0,T]\times\mathbb{R}^{d}$.
If one-dimensional, then the representation \eqref{CVA_ref_E} can be simplified with the aid of the formula \eqref{Merton_formula}, as follows: 
\begin{equation}\label{CVA_ref_Merton_formula}
u(0,x)=e^{-cT}\mathrm{Merton}(x,K,T,r,\sigma,\lambda,\mu_J,\sigma_J).
\end{equation}

\subsubsection{One-dimensional case}
\label{example merton univariate}

For optimal illustration, we start with the one-dimensional setting ($d=1$), with the problem parameters set as $T=1.0$, $r=0.04$, $\sigma=0.25$, $\lambda=0.5$, $\mu_J=0.5$, $\sigma_J=0.5$, and $c=0.1$.
Note that we present the variable $x$ in a non-bold font to emphasize its univariate nature.
In view of Theorem \ref{thm1}, we conduct the standard Monte Carlo estimation based on $10^7$ iid paths for accurately approximating $w_m(0,x)$ at initial state $x=10.0$ for the first five iterations $m\in \{0,1,\cdots,5\}$ and estimate the approximation errors $|u_{\rm ref}(0,x)-w_m(0,x)|$ (but without taking the supremum as in Theorem \ref{thm1}), 
where the reference values $u_{\rm ref}(0,x)$ are due to the formula \eqref{CVA_ref_Merton_formula}.
We provide in Table \ref{Tbl_Merton} numerical results at three strikes $K\in \{5.0,\,10.0,\,15.0\}$ and plot in Figure \ref{Fig_Merton} an extended set of approximation errors, here at eleven different strikes $K\in\{5.0,6.0,\cdots,15.0\}$.
Only a few iterations appear to be sufficient to achieve a relative error of $1\%$ across all presented strikes.

\begin{table}[H]
  \begin{center}\small
    \begin{tabular}{c||cc|cc|cc}
      \hline & \multicolumn{2}{c|}{$K=5.0$} & \multicolumn{2}{c|}{$K=10.0$} & \multicolumn{2}{c}{$K=15.0$}\\
      \hline
      & value & error& value & error& value & error\\\hline\hline
      $u_{\rm ref}(0,x)$ & $0.070331$ & $-$& $2.160926$ & $-$& $5.674748$ & $-$\\\hline
      $w_0(0,x)$ & $0.081661$ & $0.011330$ ($16.11\%$) & $3.307421$ & $1.146495$ ($53.06\%$)  & $8.257754$ & $2.583006$ ($45.52\%$) \\ 
      $w_1(0,x)$ & $0.073525$ & $0.003196$ ($4.54\%$) & $1.886526$ & $0.274400$ ($12.70\%$) & $5.163110$ & $0.511638$ ($9.02\%$) \\
      $w_2(0,x)$ & $0.068875$ & $0.001456$ ($2.07\%$) & $2.205167$ & $0.044241$ ($2.05\%$) & $5.738672$ & $0.063924$ ($1.13\%$) \\
      $w_3(0,x)$ & $0.070668$ & $0.000336$ ($0.48\%$) & $2.155514$ & $0.005412$ ($0.25\%$) & $5.669097$ & $0.005651$ ($0.10\%$) \\
      $w_4(0,x)$ & $0.070289$ & $0.000042$ ($0.06\%$) & $2.161511$ & $0.000585$ ($0.03\%$) & $5.675256$ & $0.000508$ ($0.01\%$) \\
      $w_5(0,x)$ & $0.070352$ & $0.000021$ ($0.03\%$) & $2.160999$ & $0.000073$ ($0.00\%$) & $5.674685$ & $0.000063$ ($0.00\%$) \\\hline
      \end{tabular}
      \caption{The reference values $u_{\rm ref}(0,x)$, the values from the recursion $\{w_m(0,x)\}_{m\in \{0,1,\cdots,5\}}$, and the approximation errors $|u_{\rm ref}(0,x)-w_m(0,x)|$ for $m\in \{0,1,\cdots,5\}$ at initial state $x=10.0$ and three different strikes $K\in \{5.0,\,10.0,\,15.0\}$, with relative errors in brackets.}
      \label{Tbl_Merton}
    \end{center}
\end{table}

\begin{figure}[H]
\centering
\includegraphics[width=9cm]{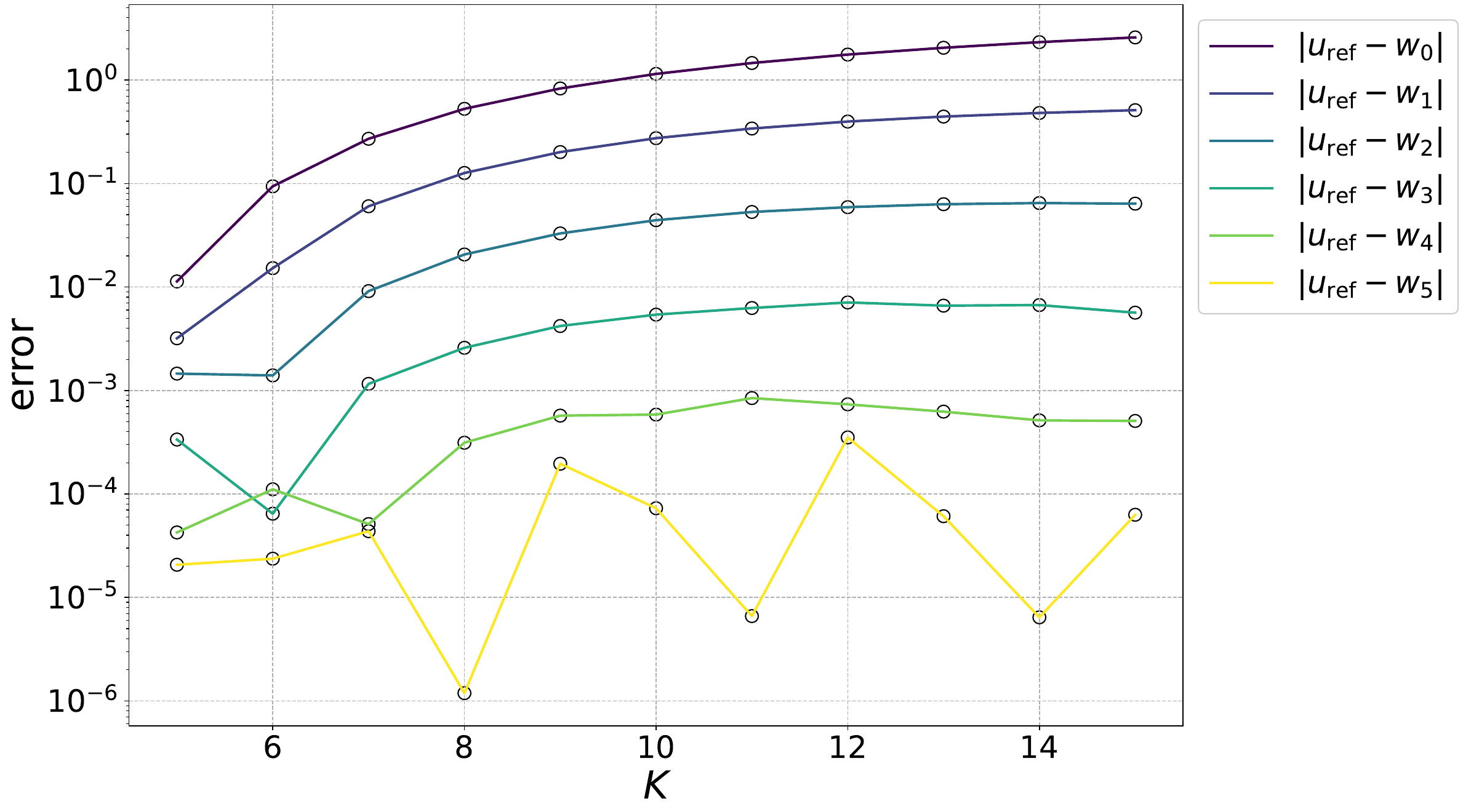}
\caption{The approximation errors $|u_{\rm ref}(0,x)-w_m(0,x)|$ for $m\in \{0,1,\cdots,5\}$ at initial state $x=10.0$ and eleven different strikes $K\in\{5.0,6.0,\cdots,15.0\}$.}
\label{Fig_Merton}
\end{figure}

\subsubsection{Multi-dimensional case}


We next examine the multi-dimension setting ($d\in \{2,3,\cdots\}$).
With the problem parameters $d=10$, $T=1.0$, $r=0.04$, $\sigma=0.25\mathbb{I}_d$, $\lambda=0.5$, $\mu_J=0.5\mathbbm{1}_d$, and $\sigma_J=0.5\mathbb{I}_d$, we approximate $w_m(0,{\bf x})$ for the first five iterations $m\in\{0,1,\cdots,5\}$ by the standard Monte Carlo estimation based on $10^7$ iid paths with the aid of its semi-analytic expression \eqref{Merton_wm} 
and then estimate the approximation errors $|u_{\rm ref}(0,{\bf x})-w_m(0,{\bf x})|$ on the basis of the reference values $u_{\rm ref}(0,{\bf x})$ obtained in advance  due to the formula \eqref{CVA_ref_E} by the standard Monte Carlo estimation with $10^8$ iid paths.
In light of the numerical results presented in Table \ref{Tbl_Merton_d} and Figure \ref{Fig_Merton_d}, it is quite encouraging that iterating only a few steps seem sufficient to achieve a relative error of $1\%$, even with the increased problem dimension, as expected in Theorem \ref{thm1}.

\begin{table}[H]
  \begin{center}\small
    \begin{tabular}{c||cc|cc|cc}
      \hline & \multicolumn{2}{c|}{$K=5.0$} & \multicolumn{2}{c|}{$K=10.0$} & \multicolumn{2}{c}{$K=15.0$}\\
      \hline
      & value & error& value & error& value & error\\\hline\hline
      $u_{\rm ref}(0,{\bf x})$ & $0.000005$ & $-$& $1.746767$ & $-$& $5.060325$ & $-$\\\hline
      $w_0(0,{\bf x})$ & $0.000011$ & $0.000006$ ($120.00$\%) & $3.257313$ & $1.510746$ ($86.49$\%) & $8.257382$ & $3.197053$ ($63.18$\%) \\ 
      $w_1(0,{\bf x})$ & $0.000004$ & $0.000001$ ($20.00$\%) & $1.227868$ & $0.518900$ ($29.71$\%) & $4.301782$ & $0.758543$ ($14.99$\%) \\
      $w_2(0,{\bf x})$ & $0.000006$ & $0.000001$ ($20.00$\%) & $1.859807$ & $0.113039$ ($6.47$\%) & $5.152434$ & $0.092109$ ($1.82$\%) \\
      $w_3(0,{\bf x})$ & $0.000006$ & $0.000000$ ($0.00$\%) & $1.728550$ & $0.018217$ ($1.04$\%) & $5.057154$ & $0.003171$ ($0.06$\%) \\
      $w_4(0,{\bf x})$ & $0.000005$ & $0.000000$ ($0.00$\%) & $1.749069$ & $0.002301$ ($0.13$\%) & $5.059185$ & $0.001140$ ($0.02$\%) \\
      $w_5(0,{\bf x})$ & $0.000005$ & $0.000000$ ($0.00$\%) & $1.746431$ & $0.000337$ ($0.02$\%) & $5.060723$ & $0.000398$ ($0.01$\%) \\\hline
      \end{tabular}
      \caption{The reference values $u_{\rm ref}(0,{\bf x})$, the values from the recursion $\{w_m(0,{\bf x})\}_{m\in \{0,1,\cdots,5\}}$, and the approximation errors $|u_{\rm ref}(0,{\bf x})-w_m(0,{\bf x})|$ for $m\in \{0,1,\cdots,5\}$ at initial state ${\bf x}=10.0\mathbbm{1}_d$ and three different strikes $K\in \{5.0,\,10.0,\,15.0\}$, with relative errors in brackets.}
      \label{Tbl_Merton_d}
    \end{center}
  \end{table} 

\begin{figure}[H]
\centering
\includegraphics[width=9cm]{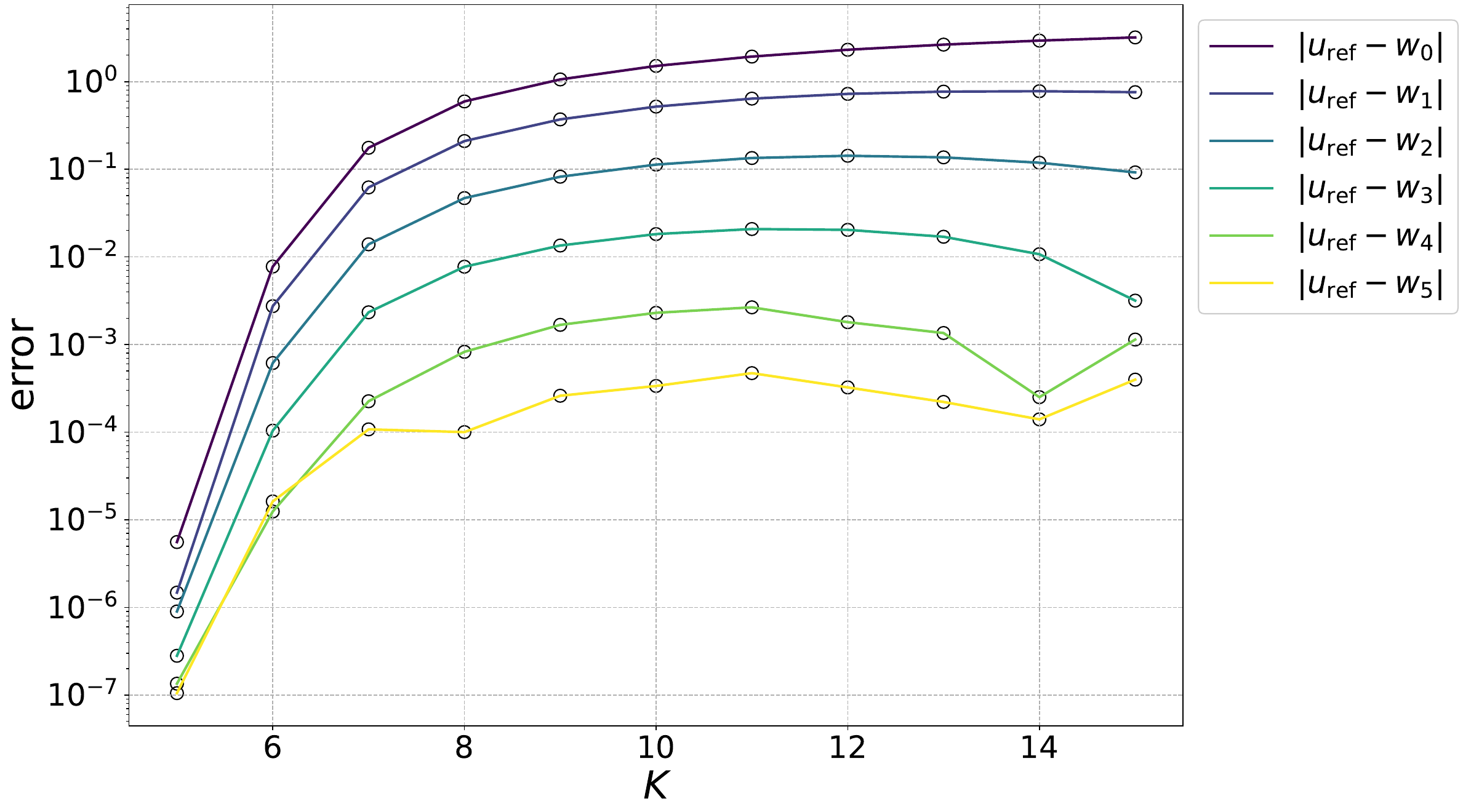}
\caption{The approximation errors $|u_{\rm ref}(0,{\bf x})-w_m(0,{\bf x})|$ for $m\in \{0,1,\cdots,5\}$ at initial state ${\bf x}=10.0\mathbbm{1}_d$ and eleven different strikes $K\in\{5.0,6.0,\cdots,15.0\}$.}
\label{Fig_Merton_d}
\end{figure}



\subsection{Second example}\label{section second example}

In this section, we examine the developed forward scheme and its machine learning-based implementation (Section \ref{section forward scheme}) with an emphasis on the problem dimension and the jump intensity (or, equivalently, the parameter $\eta$ in Theorem \ref{thm1}).
To this end, consider a multi-dimensional version of the FBSDE investigated in \cite[Section 5.2]{GEISS20162123}, with $d_{\bf x}=d_0=d$ for some $d\in \mathbb{N}$ and $d_{\bf y}=d_{\bf z}=1$: for $s\in [0,T]$,
\[
\begin{dcases}
dX_s^{0,{\bf x}} = b ds + \sigma dB_s + \mathbbm{1}_dcdN_s ,\\
dY_s^{0,{\bf x}}=-(\alpha Y_s^{0,{\bf x}}+\beta \langle Z_s^{0,{\bf x}},\mathbbm{1}_d/d\rangle+\rho U_s^{0,{\bf x}})ds+\langle Z_s^{0,{\bf x}}, dB_s\rangle+ U_s^{0,{\bf x}}(dN_s-\lambda ds),\\
X_0^{0,{\bf x}}= {\bf x}\in \mathbb{R}^d,\quad Y_T^{0,{\bf x}}=\Phi(X_T^{0,{\bf x}}),
\end{dcases}
\]
where $b\in \mathbb{R}^d$, $\sigma\in\mathbb{R}^{d\times d}$, $c>0$, $\{N_s:\,s\in [0,T]\}$ denotes the Poisson process with intensity $\lambda>0$, independent of the $d$-dimensional standard Brownian motion $\{B_s:\,s\in [0,T]\}$, and $\Phi({\bf x})=e^{\langle {\bf x},\mathbbm{1}_d/d\rangle}$.
Given the constant intensity function $\lambda(\cdot,\cdot)\equiv \lambda>0$, the parameter $\eta$, denoting its upper bound (Assumption \ref{standing assumption}), can be regarded as identical to the constant intensity, that is, $\eta=\lambda$.  
In accordance with the formulation \eqref{PIDE}, its associated PIDE can be written as: $(s,{\bf x})\in[0,T)\times \mathbb{R}^d,$
\begin{multline*}
\frac{\partial}{\partial s} u(s,{\bf x})+\langle b, \nabla u(s,{\bf x})\rangle +\frac{1}{2}\mathrm{tr}\left[\sigma^{\otimes 2}\mathrm{Hess}(u(s,{\bf x}))\right]\\
+(u(s,{\bf x}+c)-u(s,{\bf x}))\lambda+f(u(s,{\bf x}),(\nabla u\sigma)(s,{\bf x}),(u(s,{\bf x}+c)-u(s,{\bf x}))\lambda)=0, 
\end{multline*}
with $u(T,{\bf x})=\Phi({\bf x})$.
Here, the driver is given by $f(y,{\bf z},\gamma)=\textstyle{\alpha y + \beta \langle {\bf z},\mathbbm{1}_d/d\rangle+\rho \gamma/\lambda}$ for $(y,{\bf z},\gamma)\in\mathbb{R}\times \mathbb{R}^d\times \mathbb{R}$.
This formulation depends on the argument ${\bf z}$ and $\gamma$, making this problem setting somewhat more general than that examined in Section \ref{subsection first example}.
We note that the reference value $u(0,{\bf x})$ can be found easily with the aid of the analytic solution, which is available when $d=1$ \cite[Section 5.2]{GEISS20162123} (as employed shortly in Section \ref{ex1 toy 00}), and even when $d\in \{2,3,\cdots\}$ under suitable restrictions on the problem parameters (Section \ref{ex1 high-dimensional}).
In accordance with the formulations \eqref{PIDE_0} and \eqref{PIDE_m}, the recursion $\{w_m\}_{m\in\mathbb{N}_0}$ can be written as: for $(s,{\bf x})\in[0,T)\times \mathbb{R}^d$, 
\[
\frac{\partial}{\partial s} w_0(s,{\bf x})+\langle b, \nabla w_0(s,{\bf x})\rangle+\frac{1}{2}\mathrm{tr}\left[\sigma^{\otimes 2}\mathrm{Hess}(w_0(s,{\bf x}))\right]=0,
\]
and
\begin{multline*}
\frac{\partial}{\partial s} w_m(s,{\bf x})+\langle b, \nabla w_m(s,{\bf x})\rangle+\frac{1}{2}\mathrm{tr}\left[\sigma^{\otimes 2}\mathrm{Hess}(w_m(s,{\bf x}))\right]+(w_{m-1}(s,{\bf x}+c)-w_{m-1}(s,{\bf x}))\lambda\\
+f(w_{m-1}(s,{\bf x}),(\nabla w_{m-1}\sigma)(s,{\bf x}),(w_{m-1}(s,{\bf x}+c)-w_{m-1}(s,{\bf x})\lambda))=0,
\end{multline*}
with $w_m(T,{\bf x})=\Phi({\bf x})$ for all $m\in\mathbb{N}_0$.
As evident, the first one can be solved explicitly for $w_0$, while not for $w_m$ even with $\beta=\rho=0$, to the best of our knowledge.

\subsubsection{One-dimensional case}
\label{ex1 toy 00}

For better illustration, we start with the one-dimensional setting ($d=1$), with the problem parameters fixed as $T=2.0$, $b=-0.1$, $\sigma=0.1$, $c=0.2$, $\lambda=3.0$, $\alpha=0.3$, $\beta=0.3$, and $\rho=0.2$, in consistency with \cite{GEISS20162123}.
To approximate the recursion $\{w_m\}_{m\in\mathbb{N}_0}$, we employ the least squares Monte Carlo method (Section \ref{subsection LSMC}) with the number of paths $M=10^7$ and the number of time discretization $n=2^6$ throughout, that is, even for $w_0$ (despite the availability of its analytic solution) to maintain consistency with the approximation of all subsequent iterations $\{w_m\}_{m\in\mathbb{N}}$. 
We present in Table \ref{Tbl_eg2_1d_lsmc_GL} the approximation errors $|u_{\rm ref}(0,x)-w_m(0,x)|$ for $m\in \{0,1,\cdots,8\}$, where the reference values $u_{\rm ref}(0,x)$ are due to its analytical solution, and each step of the recursion $\{w_m(0,x)\}_{m\in\mathbb{N}_0}$ is approximated as average of five iid runs of the least squares Monte Carlo method (Section \ref{subsection LSMC}) with the basis functions $\{v_j^{\mathrm{basis}}(x)\}_{j\in \{1,\cdots,4\}}=\{1,x,x^2,\Phi(x)\}$, along with plots in Figure \ref{Fig_eg2_1d_lsmc_GL} at an extended set of initial states.
We note that each of the five runs for computing the eighth step $w_8(0,x)$ takes roughly $120$ seconds.
As expected with reference to Theorem \ref{thm1}, it appears that the proposed forward scheme converges to the solution $u$ even when the driver $f$ depends on the arguments $z$ and $\gamma$.
Nonetheless, the proposed forward scheme has required not only a few but seven iterations to achieve a relative error of $1\%$ at those initial states, primarily due to the high jump intensity $\lambda=3.0$, which can significantly influence the trajectory.

\begin{table}[H]
\begin{center}\small
\begin{tabular}{c||cc|cc|cc}
\hline  & \multicolumn{2}{c|}{$x=-0.5$} & \multicolumn{2}{c|}{$x=0.0$} & \multicolumn{2}{c}{$x=0.5$}\\
\hline
& value & error& value & error& value & error\\\hline\hline
$u_{\rm ref}(0,x)$ & $4.002734$ & $-$& $6.599393$ & $-$& $10.880559$ & $-$\\\hline
$w_0(0,x)$ & $0.501579$&$3.501155$ ($87.47$\%) &$0.826964$&$5.772429$ ($87.47$\%) &$1.363433$&$9.517127$ ($87.47$\%)\\
$w_1(0,x)$ & $1.543723$&$2.459011$ ($61.43$\%)&$2.545168$&$4.054224$ ($61.43$\%)&$4.196274$&$6.684285$ ($61.43$\%)\\
$w_2(0,x)$ & $2.600356$&$1.402378$ ($35.04$\%)&$4.287259$&$2.312134$ ($35.04$\%)&$7.068500$&$3.812059$ ($35.04$\%)\\
$w_3(0,x)$ & $3.336850$&$0.665884$ ($16.64$\%)&$5.501532$&$1.097860$ ($16.64$\%)&$9.070503$&$1.810056$ ($16.64$\%)\\
$w_4(0,x)$ & $3.728353$&$0.274381$ ($6.85$\%)&$6.147005$&$0.452388$ ($6.85$\%)&$10.134711$&$0.745848$ ($6.85$\%)\\
$w_5(0,x)$ & $3.896475$&$0.106259$ ($2.65$\%)&$6.424196$&$0.175196$ ($2.65$\%)&$10.591722$&$0.288838$ ($2.65$\%)\\
$w_6(0,x)$ & $3.956764$&$0.045970$ ($1.15$\%)&$6.523593$&$0.075800$ ($1.15$\%)&$10.755599$&$0.124960$ ($1.15$\%)\\
$w_7(0,x)$ & $3.975274$&$0.027460$ ($0.69$\%)&$6.554104$&$0.045289$ ($0.69$\%)&$10.805907$&$0.074652$ ($0.69$\%)\\
$w_8(0,x)$ & $3.980245$&$0.022489$ ($0.56$\%)&$6.562304$&$0.037089$ ($0.56$\%)&$10.819420$&$0.061139$ ($0.56$\%)\\
\hline
\end{tabular}
\caption{The reference values $u_{\rm ref}(0,x)$, the values from the recursion $\{w_m(0,x)\}_{m\in \{0,1,\cdots,8\}}$, and the approximation errors $|u_{\rm ref}(0,x)-w_m(0,x)|$ for $m\in \{0,1,\cdots,8\}$ with a high jump intensity of $\lambda=3.0$ at three different initial states $x\in\{-0.5,0.0,0.5\}$ by the least squares Monte Carlo method (Section \ref{subsection LSMC}), with relative errors in brackets.}
\label{Tbl_eg2_1d_lsmc_GL}
\end{center}
\end{table}

\begin{figure}[H]
\centering
\includegraphics[width=9cm]{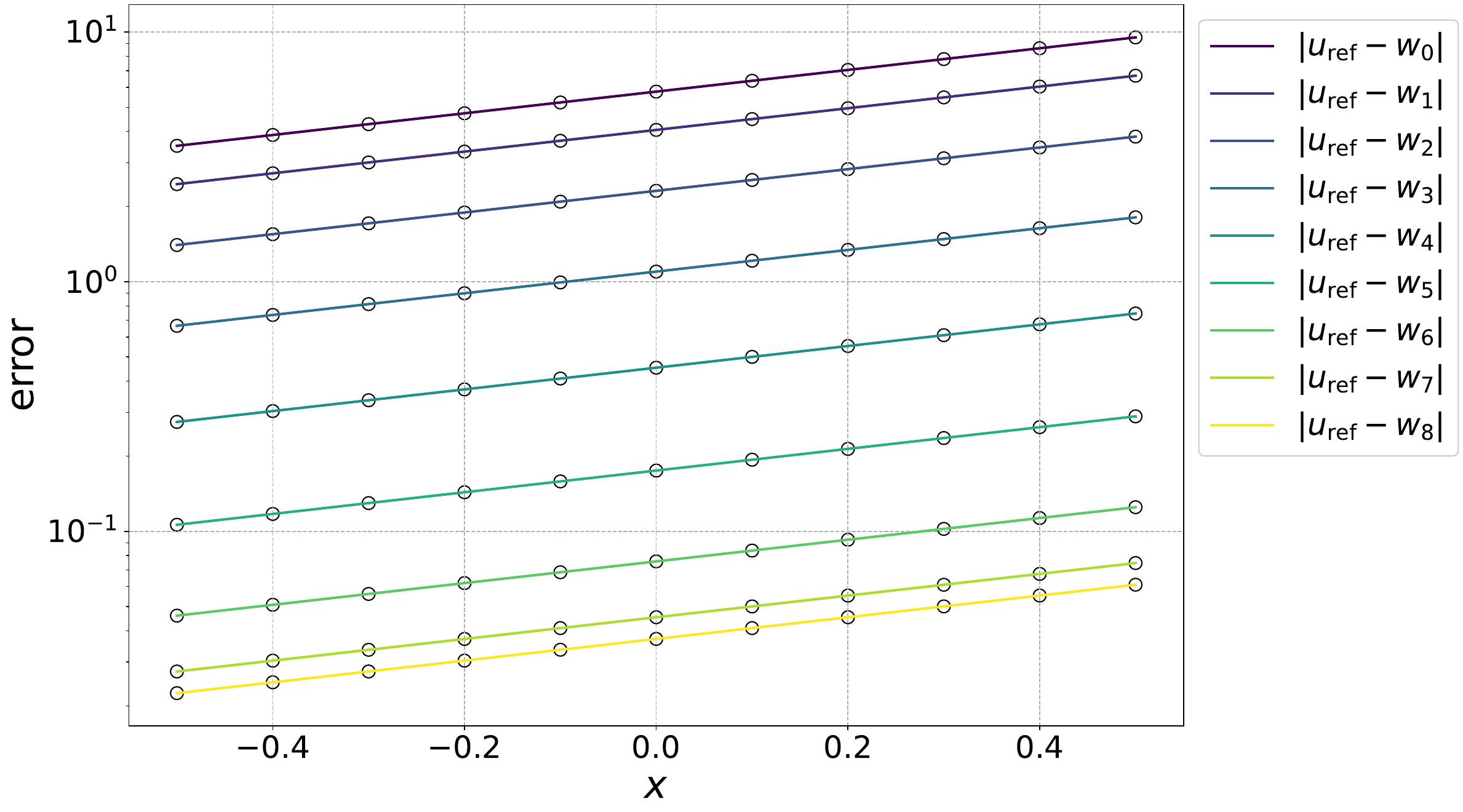}
\caption{The approximation errors $|u_{\rm ref}(0,x)-w_m(0,x)|$ for $m\in \{0,1,\cdots,8\}$ with a high jump intensity of $\lambda=3.0$ at eleven different initial states $x\in \{-0.5,-0.4,\cdots,0.5\}$ by the least squares Monte Carlo method (Section \ref{subsection LSMC}).}
\label{Fig_eg2_1d_lsmc_GL}
\end{figure}

Next, to demonstrate the effect of the jump intensity on the performance of the proposed forward scheme, we examine lower jump intensities while keeping all other problem parameters unchanged for a fair comparison.
In Tables \ref{Tbl_eg2_1d_lsmc_lambda_1.0} and \ref{Tbl_eg2_1d_lsmc_lambda_0.5}, we present numerical results for the jump intensities $\lambda=1.0$ and $\lambda=0.5$, respectively.
For further illustration, we also provide plots of the approximation errors in Figure \ref{Fig_eg2_1d_lsmc_lambda}, comparing those two cases side by side.
Clearly, only with the value of the jump intensity $\lambda$ amended, the computing times required here remain similar to the case of $\lambda=3.0$, that is, each of the five runs for computing the eighth step $w_8(0,x)$ takes roughly $120$ seconds.
Unlike the high intensity ($\lambda=3.0$ corresponding to Table \ref{Tbl_eg2_1d_lsmc_GL} and Figure \ref{Fig_eg2_1d_lsmc_GL}), those lower intensities do not ``shape'' the trajectory but rather only introduce occasional irregularities into the diffusive dynamics, such as rare credit events occurring at most once year ($\lambda=1.0$) or even less frequently ($\lambda=0.5$) on average.
Now, with those lower jump intensities, the proposed forward scheme has required only five and four iterations, respectively, to achieve a relative error of $1\%$ across all presented initial states, as depicted in Figure \ref{Fig_eg2_1d_lsmc_lambda}.
Those results contrast sharply with the roughly seven iterations required for the high jump intensity $\lambda=3.0$ (Figure \ref{Fig_eg2_1d_lsmc_GL}).
In summary, this is a consistent consequence of the vanishing upper bound for the approximation error with respect to lower jump intensities, as expected in Theorem \ref{thm1}.

\begin{table}[H]
\begin{center}\small
\begin{tabular}{c||cc|cc|cc}
\hline  & \multicolumn{2}{c|}{$x=-0.5$} & \multicolumn{2}{c|}{$x=0.0$} & \multicolumn{2}{c}{$x=0.5$}\\
\hline
& value & error& value & error& value & error\\\hline\hline
$u_{\rm ref}(0,x)$ & $1.650976$ & $-$& $2.721999$ & $-$& $4.487818$ & $-$\\\hline
$w_0(0,x)$ & $0.501579$ & $1.149397$ ($69.62\%$) & $0.826964$ & $1.895035$ ($69.62\%$) & $1.363433$ & $3.124385$ ($69.62\%$) \\
$w_1(0,x)$ & $1.099348$ & $0.551628$ ($33.41\%$) & $1.812519$ & $0.909480$ ($33.41\%$) & $2.988339$ & $1.499479$ ($33.41\%$) \\
$w_2(0,x)$ & $1.447513$ & $0.203463$ ($12.32\%$) & $2.386544$ & $0.335455$ ($12.32\%$) & $3.934748$ & $0.553070$ ($12.32\%$) \\
$w_3(0,x)$ & $1.587145$ & $0.063831$ ($3.87\%$) & $2.616756$ & $0.105243$ ($3.87\%$) & $4.314304$ & $0.173514$ ($3.87\%$) \\
$w_4(0,x)$ & $1.629841$ & $0.021135$ ($1.28\%$) & $2.687151$ & $0.034848$ ($1.28\%$) & $4.430365$ & $0.057453$ ($1.28\%$) \\
$w_5(0,x)$ & $1.640434$ & $0.010542$ ($0.64\%$) & $2.704616$ & $0.017383$ ($0.64\%$) & $4.459161$ & $0.028657$ ($0.64\%$) \\
$w_6(0,x)$ & $1.642646$ & $0.008330$ ($0.50\%$) & $2.708263$ & $0.013736$ ($0.50\%$) & $4.465173$ & $0.022645$ ($0.50\%$) \\
$w_7(0,x)$ & $1.643045$ & $0.007931$ ($0.48\%$) & $2.708920$ & $0.013079$ ($0.48\%$) & $4.466256$ & $0.021562$ ($0.48\%$) \\
$w_8(0,x)$ & $1.643107$ & $0.007869$ ($0.48\%$) & $2.709024$ & $0.012975$ ($0.48\%$) & $4.466427$ & $0.021391$ ($0.48\%$) \\
\hline
\end{tabular}
\caption{The reference values $u_{\rm ref}(0,x)$, the values from the recursion $\{w_m(0,x)\}_{m\in \{0,1,\cdots,8\}}$, and the approximation errors $|u_{\rm ref}(0,x)-w_m(0,x)|$ for $m\in \{0,1,\cdots,8\}$ with a lower jump intensity of $\lambda=1.0$ at three different initial states $x\in\{-0.5,0.0,0.5\}$ by the least squares Monte Carlo method (Section \ref{subsection LSMC}), with relative errors in brackets.}
\label{Tbl_eg2_1d_lsmc_lambda_1.0}
\end{center}
\end{table}

\begin{table}[H]
\begin{center}\small
\begin{tabular}{c||cc|cc|cc}
\hline  & \multicolumn{2}{c|}{$x=-0.5$} & \multicolumn{2}{c|}{$x=0.0$} & \multicolumn{2}{c}{$x=0.5$}\\
\hline
& value & error& value & error& value & error\\\hline\hline
$u_{\rm ref}(0,x)$ & $1.323082$ & $-$& $2.181393$ & $-$& $3.596510$ & $-$\\\hline
$w_0(0,x)$ & $0.501579$ & $0.821503$ ($62.09\%$) & $0.826964$ & $1.354430$ ($62.09\%$) & $1.363433$ & $2.233077$ ($62.09\%$) \\
$w_1(0,x)$ & $0.988255$ & $0.334827$ ($25.31\%$) & $1.629356$ & $0.552037$ ($25.31\%$) & $2.686355$ & $0.910155$ ($25.31\%$) \\
$w_2(0,x)$ & $1.219152$ & $0.103930$ ($7.86\%$) & $2.010040$ & $0.171354$ ($7.86\%$) & $3.313996$ & $0.282514$ ($7.86\%$) \\
$w_3(0,x)$ & $1.294617$ & $0.028465$ ($2.15\%$) & $2.134462$ & $0.046932$ ($2.15\%$) & $3.519133$ & $0.077376$ ($2.15\%$) \\
$w_4(0,x)$ & $1.313421$ & $0.009661$ ($0.73\%$) & $2.165462$ & $0.015931$ ($0.73\%$) & $3.570246$ & $0.026264$ ($0.73\%$) \\
$w_5(0,x)$ & $1.317225$ & $0.005857$ ($0.44\%$) & $2.171737$ & $0.009657$ ($0.44\%$) & $3.580590$ & $0.015920$ ($0.44\%$) \\
$w_6(0,x)$ & $1.317876$ & $0.005206$ ($0.39\%$) & $2.172809$ & $0.008584$ ($0.39\%$) & $3.582358$ & $0.014151$ ($0.39\%$) \\
$w_7(0,x)$ & $1.317973$ & $0.005109$ ($0.39\%$) & $2.172968$ & $0.008425$ ($0.39\%$) & $3.582620$ & $0.013890$ ($0.39\%$) \\
$w_8(0,x)$ & $1.317985$ & $0.005097$ ($0.39\%$) & $2.172989$ & $0.008404$ ($0.39\%$) & $3.582654$ & $0.013856$ ($0.39\%$) \\
\hline
\end{tabular}
\caption{The reference values $u_{\rm ref}(0,x)$, the values from the recursion $\{w_m(0,x)\}_{m\in \{0,1,\cdots,8\}}$, and the approximation errors $|u_{\rm ref}(0,x)-w_m(0,x)|$ for $m\in \{0,1,\cdots,8\}$ with an even lower jump intensity of $\lambda=0.5$ at three different initial states $x\in\{-0.5,0.0,0.5\}$ by the least squares Monte Carlo method (Section \ref{subsection LSMC}), with relative errors in brackets.}
\label{Tbl_eg2_1d_lsmc_lambda_0.5}
\end{center}
\end{table}
\begin{figure}[H]
\centering
\begin{tabular}{cc}
\includegraphics[height=5cm]{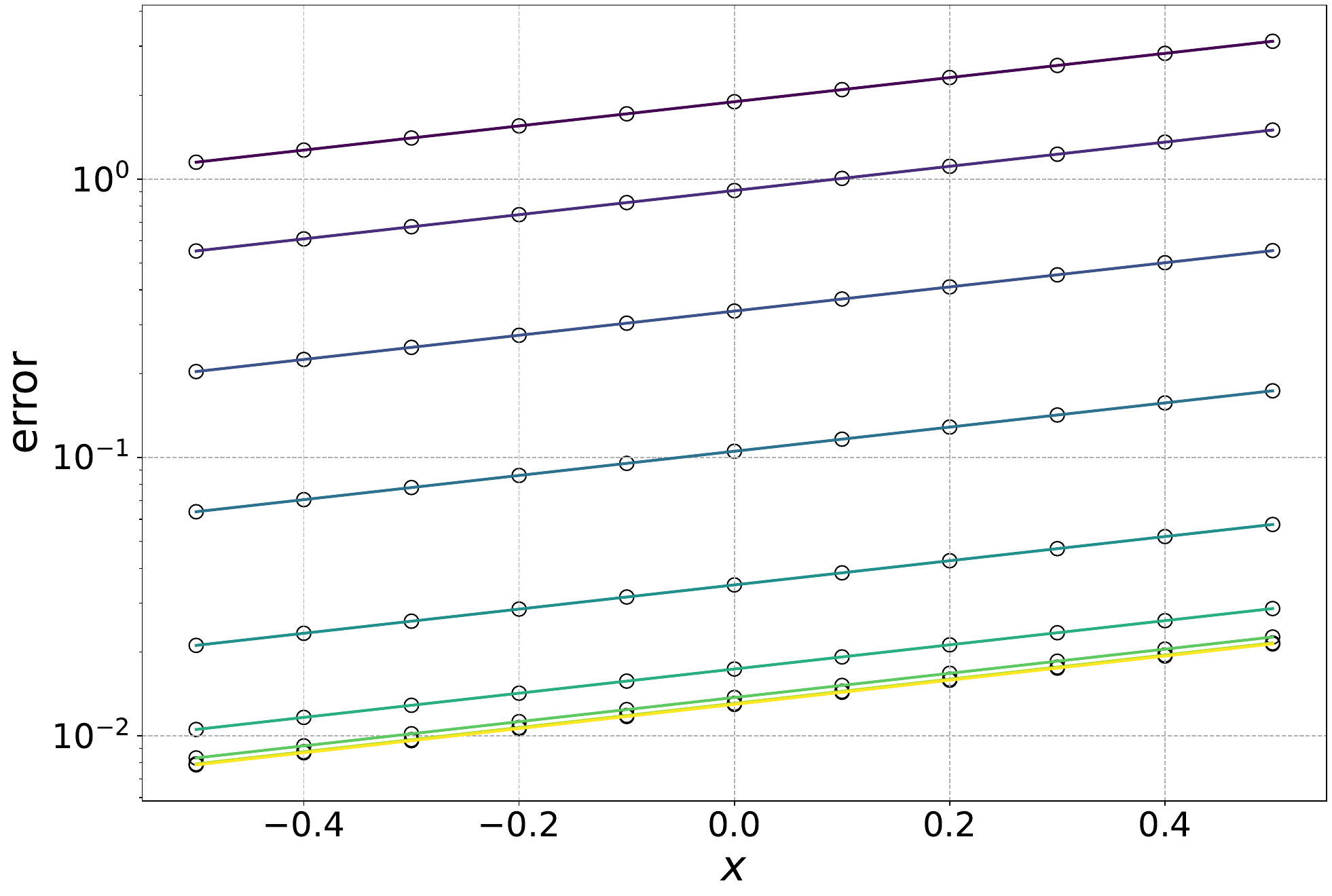} &
\includegraphics[height=5cm]{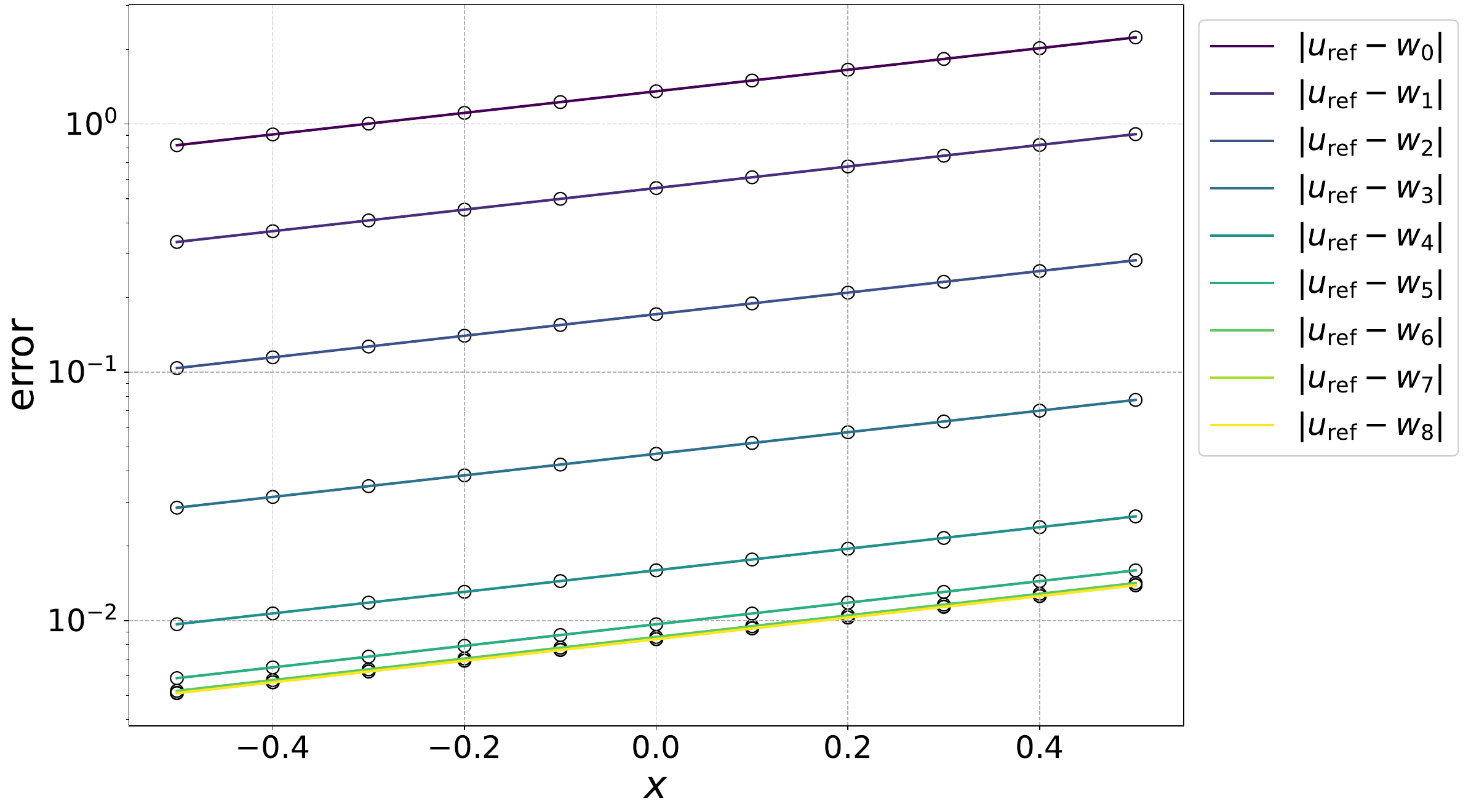} \\
(a) $\lambda=1.0$ & (b) $\lambda=0.5$
\end{tabular}
\caption{The approximation errors $|u_{\rm ref}(0,x)-w_m(0,x)|$ for $m\in \{0,1,\cdots,8\}$ with lower jump intensities of (a) $\lambda=1.0$ and (b) $\lambda=0.5$ at eleven different initial states $x\in \{-0.5,-0.4,\cdots,0.5\}$ by the least squares Monte Carlo method (Section \ref{subsection LSMC}).}
\label{Fig_eg2_1d_lsmc_lambda}
\end{figure}

\subsubsection{Multi-dimensional case}
\label{ex1 high-dimensional}

Finally, we examine the multi-dimensional setting ($d\in \{2,3,\cdots\}$).
We have fixed the drift coefficient $b$ to have all identical components (that is, $b=b_0 \mathbbm{1}_d$ with $b_0\in\mathbb{R}$), the diffusion matrix $\sigma$ to be diagonal with all identical components (that is, $\sigma=\sigma_0\mathbb{I}_d$ with $\sigma_0>0$), and $\beta=\rho=0$, so that the solution $u(0,{\bf x})$ at ${\bf x}=(x_{\mathrm{ini}},\cdots,x_{\mathrm{ini}})^{\top}$ can be expressed in closed form for presenting its reference values without numerical issues, available as follows:
\begin{equation}\label{u ref 5.1.3}
 u(0,{\bf x})=\exp\left[x_{\mathrm{ini}}+b_0T+\left(\alpha+\frac{\sigma_0^2}{2d}\right)T+(e^c-1)\lambda T\right].
\end{equation}
If one-dimensional ($d=1$), then semi-analytic expressions to the recursion $\{w_m\}_{m\in\mathbb{N}_0}$ are available as
\[
w_0(0,x)=e^{x+(b+\sigma^2/2)T},\quad 
w_m(0,x)=\sum_{k=0}^mw_0(0,x+kc) \frac{(\lambda T)^k}{k!}\sum_{n=0}^{m-k}(-\alpha+\lambda)^n \frac{(-T)^n}{n !},
\]
for all $m\in\mathbb{N}$.
To the best of our knowledge, however, this is not the case for the multi-dimensional setting.

First, in Table \ref{Tbl_eg2_10d_lsmc} and Figure \ref{Fig_eg2_10d_lsmc}, we present numerical results for the $10$-dimensional setting ($d=10$), with problem parameters $T=1.0$, $b_0=0.1$, $\sigma_0=0.2$, $c=0.2$, $\lambda=0.5$, $\alpha=0.1$, $\beta=0$, and $\rho=0$, by the least squares Monte Carlo method (Section \ref{subsection LSMC}) with the basis functions $\{v_j^{\mathrm{basis}}({\bf x})\}_{j\in \{1,\cdots,67\}}=\{1,x_1,\cdots,x_{10},x_1^2,x_1x_2,\cdots,x_{9}x_{10},x_{10}^2,\Phi({\bf x})\}$, where the presented values from the recursion $\{w_m\}_{m\in\mathbb{N}_0}$ are the averages of five iid runs of the least squares Monte Carlo method with the number of paths $M=10^7$ and the number of time discretization $n=2^4$.
We note that each of the five runs for computing the third step $w_3(0,{\bf x})$ takes roughly $170$ seconds.
Let us remark that we employ only the least squares Monte Carlo method here, and not the neural network-based scheme (Section \ref{subsection neural network}) as of yet, because, as pointed out towards the end of Section \ref{subsection neural network}, the problem dimension $d=10$ does not sufficiently justify the substantial time required for the initial setup of the neural networks. 
In the present problem, a dimension of roughly 15 or higher would warrant the application of the neural network-based scheme.

\begin{table}[H]
\begin{center}\small
\begin{tabular}{c||cc|cc|cc}
\hline & \multicolumn{2}{c|}{$x_{\mathrm{ini}}=-0.5$} & \multicolumn{2}{c|}{$x_{\mathrm{ini}}=0.0$} & \multicolumn{2}{c}{$x_{\mathrm{ini}}=0.5$}\\
\hline
& value & error& value & error& value & error\\\hline\hline
$u_{\rm ref}(0,{\bf x})$ & $0.758919$ & $-$ & $1.251247$ & $-$ & $2.062957$ & $-$ \\ \hline
$w_0(0,{\bf x})$ & $0.671660$ & $0.087259$ ($11.50\%$) & $1.107380$ & $0.143866$ ($11.50\%$) & $1.825761$ & $0.237195$ ($11.50\%$) \\
$w_1(0,{\bf x})$ & $0.753616$ & $0.005303$ ($0.70\%$) & $1.242501$ & $0.008745$ ($0.70\%$) & $2.048540$ & $0.014417$ ($0.70\%$) \\
$w_2(0,{\bf x})$ & $0.758353$ & $0.000567$ ($0.07\%$) & $1.250310$ & $0.000936$ ($0.07\%$) & $2.061413$ & $0.001544$ ($0.07\%$) \\
$w_3(0,{\bf x})$ & $0.758553$ & $0.000366$ ($0.05\%$) & $1.250644$ & $0.000603$ ($0.05\%$) & $2.061961$ & $0.000996$ ($0.05\%$) \\
\hline
\end{tabular}
\caption{The reference values $u_{\rm ref}(0,{\bf x})$, the values from the recursion $\{w_m(0,{\bf x})\}_{m\in\mathbb{N}_0}$, and the approximation errors $|u_{\rm ref}(0,{\bf x})-w_m(0,{\bf x})|$ for $m\in \{0,1,\cdots,3\}$ at three different initial values ${\bf x}=(x_{\rm ini},\cdots,x_{\rm ini})^{\top}$ with $x_{\rm ini}\in\{-0.5,0.0,0.5\}$ by the least squares Monte Carlo method (Section \ref{subsection LSMC}), with relative errors in brackets.}
\label{Tbl_eg2_10d_lsmc}
\end{center}
\end{table}
\begin{figure}[H]
\centering
\includegraphics[width=9cm]{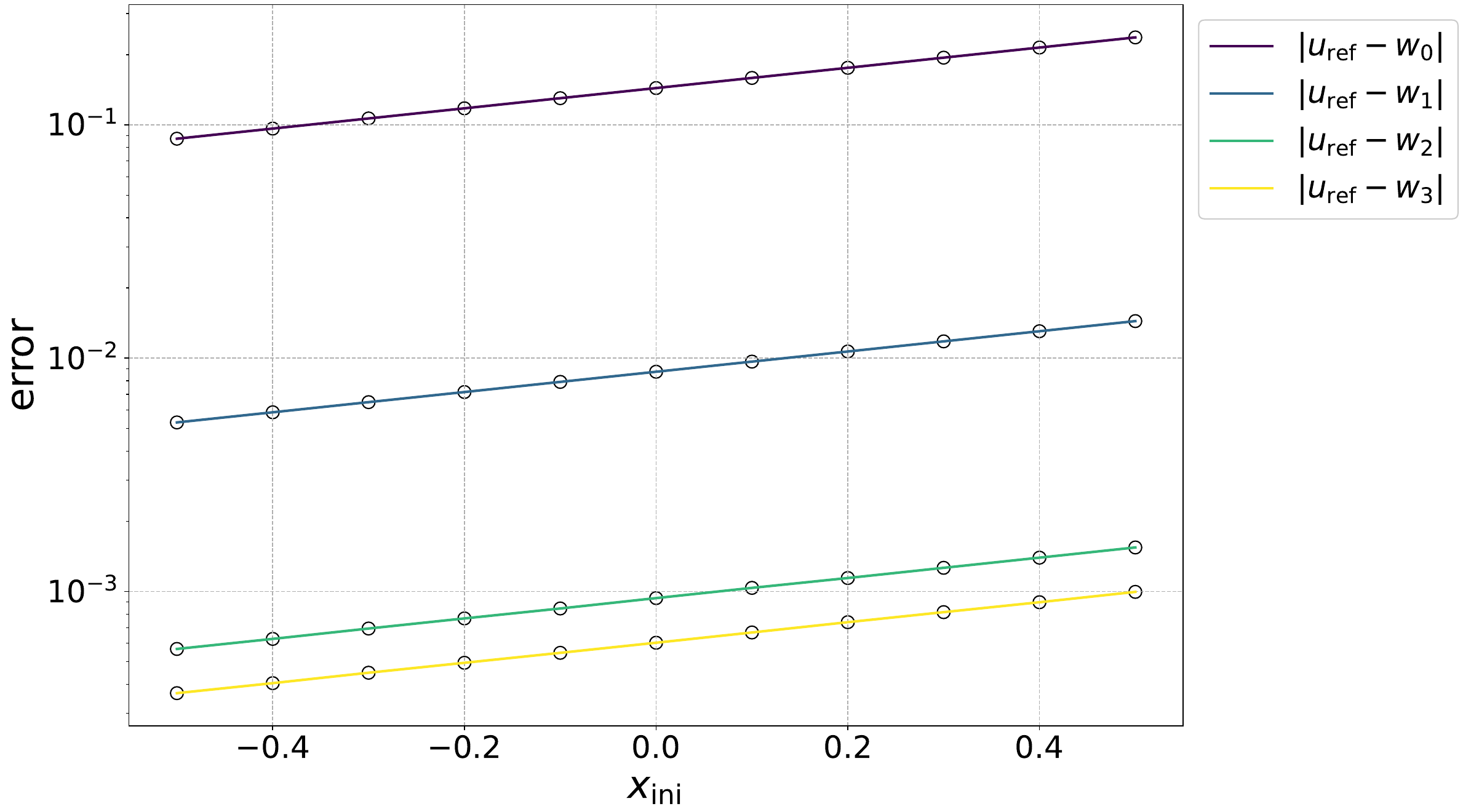}
\caption{The approximation errors $|u_{\rm ref}(0,{\bf x})-w_m(0,{\bf x})|$ for $m\in \{0,1,2,3\}$ at eleven different initial states ${\bf x}=(x_{\rm ini},\cdots,x_{\rm ini})^{\top}$ with $x\in \{-0.5,-0.4,\cdots,0.5\}$ by the least squares Monte Carlo method (Section \ref{subsection LSMC}).}
\label{Fig_eg2_10d_lsmc}
\end{figure}

We next increase the problem dimension to $d=100$ with problem parameters $T=1.0$, $b_0=0.1$, $\sigma_0=0.1$, $c=0.2$, $\lambda=0.5$, $\alpha=0.1$, $\beta=0$, and $\rho=0$, where the formula \eqref{u ref 5.1.3} remains valid.
Here, we employ the neural network-based scheme (Section \ref{subsection neural network}) alone, because at this scale, the least squares Monte Carlo method (Section \ref{subsection LSMC}) is effectively infeasible, due to the extremely large number of basis functions required.
Let us stress again that in this context, the neural network-based scheme outperforms the least squares Monte Carlo method, not in terms of computing time, but rather fundamentally in terms of feasibility.

We implement the neural network-based scheme, consisting of 1 input layer ($d$-neuron), two hidden layers ($d+10$ neurons each) and 1 output layer ($1$ neuron) with the hyperparameters fixed as  
the batch size $M=32768$ and the train steps $J=4000$ (and, moreover, with the learning rate $\iota(j)=10^{-2}\mathbbm{1}_{[0,0.3J]}(j)+10^{-3}\mathbbm{1}_{(0.3J,0.6J]}(j)+10^{-4}\mathbbm{1}_{(0.6J,J]}(j)$ for $j\in \{0,1,\cdots,J\}$, which we specify in Algorithm \ref{Algo_NN} in the supplementary materials).
As done in \cite{doi:10.1137/19M1297919}, we have applied the Adam optimizer and the batch normalization technique to enhance computational efficiency.
We present numerical results in Table \ref{Tbl_eg2_100d_NN} and Figure \ref{Fig_eg2_100d_deep}, where the presented values from the recursion $\{w_m\}_{m\in\mathbb{N}_0}$ are the averages of five iid runs by the neural network-based scheme with the number of time discretization $n=2^3$, along with the reference values again based on the formula \eqref{u ref 5.1.3}.
We note that each of the five runs for computing the third step $w_3(0,{\bf x})$ takes roughly $1500$ seconds.
It is promising that even for such large dimensions, the proposed forward scheme has achieved a relative error of $1\%$ across all those initial states after only one iteration.

\begin{table}[H]
\begin{center}\small
\begin{tabular}{c||cc|cc|cc}
\hline & \multicolumn{2}{c|}{$x_{\mathrm{ini}}=-0.5$} & \multicolumn{2}{c|}{$x_{\mathrm{ini}}=0.0$} & \multicolumn{2}{c}{$x_{\mathrm{ini}}=0.5$}\\
\hline
& value & error& value & error& value & error\\\hline\hline
$u_{\rm ref}(0,{\bf x})$ & $0.748688$ & $-$ & $1.234378$ & $-$ & $2.035145$ & $-$ \\ \hline
$w_0(0,{\bf x})$ & $0.670350$ & $0.078338$ ($10.46\%$) & $1.105223$ & $0.129155$ ($10.46\%$) & $1.822209$ & $0.212936$ ($10.46\%$) \\
$w_1(0,{\bf x})$ & $0.744059$ & $0.004629$ ($0.62\%$)  & $1.226736$ & $0.007642$ ($0.62\%$) & $2.022536$ & $0.012609$ ($0.62\%$) \\
$w_2(0,{\bf x})$ & $0.747559$ & $0.001129$ ($0.15\%$)  & $1.232539$ & $0.001839$ ($0.15\%$) & $2.032089$ & $0.003056$ ($0.15\%$) \\
$w_3(0,{\bf x})$ & $0.747672$ & $0.001016$ ($0.14\%$)  & $1.232677$ & $0.001701$ ($0.14\%$) & $2.032365$ & $0.002780$ ($0.14\%$) \\
\hline
\end{tabular}
\caption{The reference values $u_{\rm ref}(0,{\bf x})$, the values from the recursion $\{w_m(0,{\bf x})\}_{m\in\mathbb{N}_0}$, and the approximation errors $|u_{\rm ref}(0,{\bf x})-w_m(0,{\bf x})|$ for $m\in \{0,1,\cdots,3\}$ at three different initial states ${\bf x}=(x_{\rm ini},\cdots,x_{\rm ini})^{\top}$ with $x_{\rm ini}\in\{-0.5,0.0,0.5\}$ by the neural network-based scheme (Section \ref{subsection neural network}), with relative errors in brackets.}
\label{Tbl_eg2_100d_NN}
\end{center}
\end{table}

\begin{figure}[H]
\centering
\includegraphics[width=9cm]{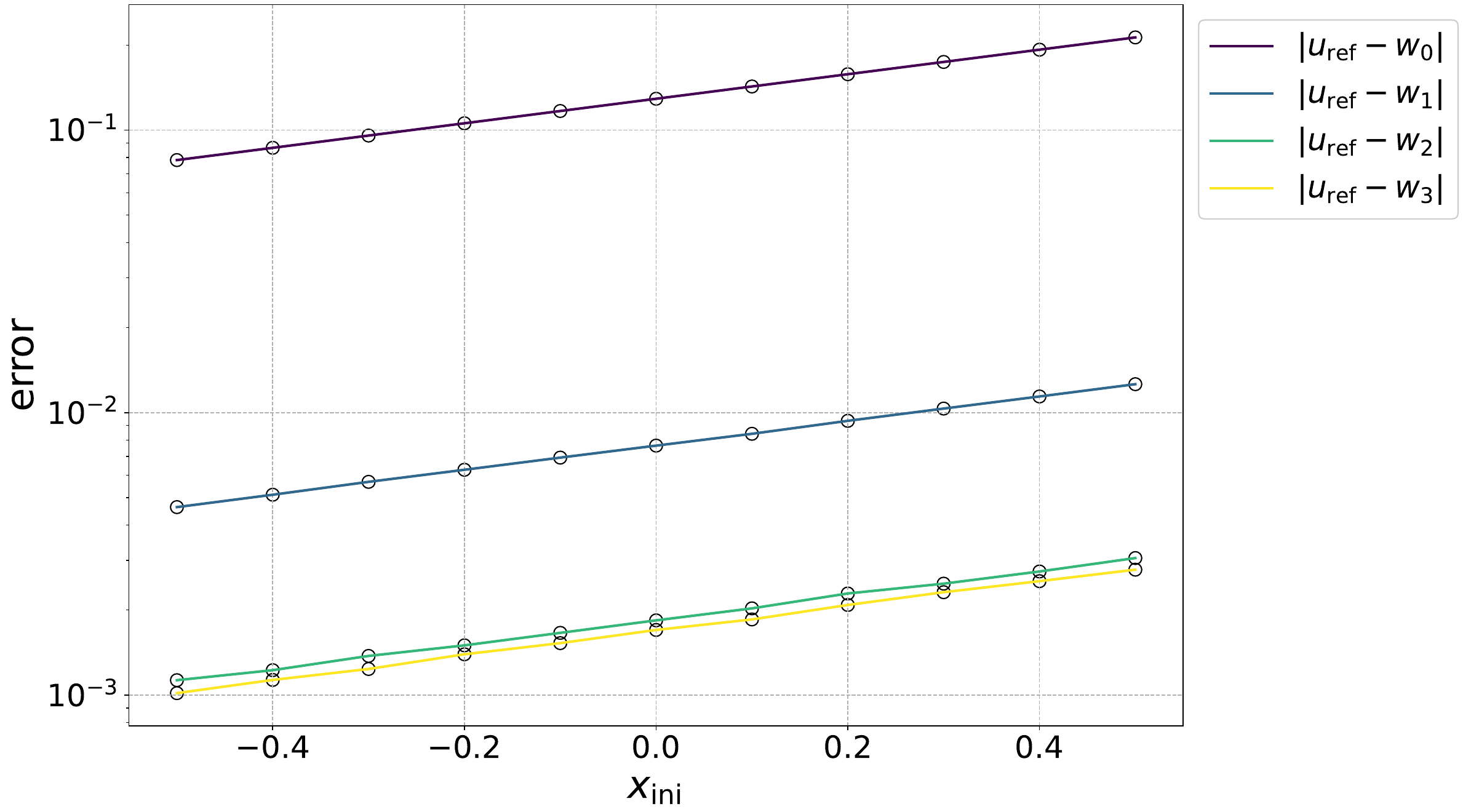}
\caption{The approximation errors $|u_{\rm ref}(0,{\bf x})-w_m(0,{\bf x})|$ for $m\in \{0,1,2,3\}$ at eleven different initial states ${\bf x}=(x_{\rm ini},\cdots,x_{\rm ini})^{\top}$ with $x\in \{-0.5,-0.4,\cdots,0.5\}$ by the neural network-based scheme (Section \ref{subsection neural network}).}
\label{Fig_eg2_100d_deep}
\end{figure}

\section{Proofs}\label{section proofs}

Here, we collect proofs of Theorems \ref{theorem Ytilde}, \ref{theorem section 3.2}, and \ref{thm1}. 
We keep the section fairly self-contained but skip nonessential details in some instances to avoid overloading the section.

\begin{proof}[Proof of Theorem \ref{theorem Ytilde}]
Applying the It\^o formula to $u(s,\widetilde{X}_s^{t,{\bf x}})$, integrating it on $[t,T]$ and then taking the expectation yields
\begin{multline}
u(t,{\bf x})=\mathbb{E}\left[\Phi(\widetilde{X}_T^{t,{\bf x}})+\int_t^Tf(s,\widetilde{X}_s^{t,{\bf x}},u(s,\widetilde{X}_s^{t,{\bf x}}),(\nabla u \sigma)(s,\widetilde{X}_s^{t,{\bf x}}),\gamma(s,\widetilde{X}_s^{t,{\bf x}};u))ds \right.\\
\left. -\int_t^T\int_{\mathbb{R}_0^{d_{\bf z}}}(u(s,\widetilde{X}_s^{t,{\bf x}}+h(s,\widetilde{X}_s^{t,{\bf x}},{\bf z}))-u(s,\widetilde{X}_s^{t,{\bf x}}))\lambda(s,\widetilde{X}_s^{t,{\bf x}})\nu(d{\bf z})ds\right].\label{u_E}
\end{multline}
Hence, the FBSDE without jumps \eqref{FBSDE_without_jump} is related to the PIDE \eqref{PIDE}, as follows:
\[
\widetilde{Y}_s^{t,{\bf x}}=u(s,\widetilde{X}_s^{t,{\bf x}}),\ \ \widetilde{Z}_s^{t,{\bf x}}=(\nabla u \sigma)(s,\widetilde{X}_s^{t,{\bf x}}),\ \ \widetilde{U}_s^{t,{\bf x}}({\bf z})=u(s,\widetilde{X}_s^{t,{\bf x}}+h(s,\widetilde{X}_s^{t,{\bf x}},{\bf z}))-u(s,\widetilde{X}_s^{t,{\bf x}}),
\]
which yields the desired result.
\end{proof}

\begin{proof}[Proof of Theorem \ref{theorem section 3.2}]
Define a sequence of the triplets $\{(\widetilde{Y}^{m,t,{\bf x}},\widetilde{Z}^{m,t,{\bf x}},\widetilde{U}^{m,t,{\bf x}})\}_{m\in\mathbb{N}_0}$ recursively on the basis of the backward component of the FBSDE without jumps \eqref{FBSDE_without_jump} alone (that is, with the forward component $\{\widetilde{X}_s^{t,{\bf x}}:\,s\in [t,T]\}$ in common for all $m\in \mathbb{N}_0$), as follows:
\begin{equation}
\widetilde{Y}^{0,t,{\bf x}}_s=\Phi(\widetilde{X}^{t,{\bf x}}_T)
-\int_s^T\widetilde{Z}^{0,t,{\bf x}}_rdB_r,\label{Y0}
\end{equation}
and then, for $m\in\mathbb{N}$,
\begin{equation}\label{Ym}
\widetilde{Y}^{m,t,{\bf x}}_s=\Phi(\widetilde{X}^{t,{\bf x}}_T)+\int_s^T f(r,\widetilde{X}_r^{t,{\bf x}},\widetilde{Y}^{m-1,t,{\bf x}}_r,\widetilde{Z}^{m-1,t,{\bf x}}_r,\widetilde{\Gamma}^{m-1,t,{\bf x}}_r)dr-\int_s^T\widetilde{Z}^{m,t,{\bf x}}_rdB_r 
+ \int_s^T \int_{\mathbb{R}_0^{d_{\bf z}}}\widetilde{U}^{m-1,t,{\bf x}}_r({\bf z})\lambda(r,\widetilde{X}^{t,{\bf x}}_r)\nu(d{\bf z})dr.
\end{equation}
Note that there exist unique solutions $\{(\widetilde{Y}^m,\widetilde{Z}^m,\widetilde{U}^m)\}_{m\in\mathbb{N}_0}$ to \eqref{Y0} and \eqref{Ym}, due to Assumption \ref{standing assumption}.
Next, thanks to the imposed smoothness and boundedness conditions on $w_m$ and $\nabla w_m$, the nonlinear Feynman-Kac formula yields
\begin{gather*}
\widetilde{Y}^{m,t,{\bf x}}_s=w_{m}(s,\widetilde{X}_s^{t,{\bf x}}),\ \ \widetilde{Z}^{m,t,{\bf x}}_s=
(\nabla w_{m} \sigma)(s,\widetilde{X}_s^{t,{\bf x}}),\nonumber\\
\widetilde{U}^{m,t,{\bf x}}_s({\bf z})=w_m(s,\widetilde{X}_s^{t,{\bf x}}+h(s,\widetilde{X}_s^{t,{\bf x}},{\bf z}))-w_m(s,\widetilde{X}_s^{t,{\bf x}}),
\\
\widetilde{\Gamma}^{m,t,{\bf x}}_s=\int_{\mathbb{R}_0^{d_{\bf z}}}\widetilde{U}^{m,t,{\bf x}}_s({\bf z})\lambda(s,\widetilde{X}_s^{t,{\bf x}})\nu(d{\bf z})=\gamma(s,\widetilde{X}_s^{t,{\bf x}};\widetilde{U}^{m,t,{\bf x}}),
\end{gather*}
for $m\in\mathbb{N}_0$ and $s\in [t,T]$.
By taking expectation of \eqref{Y0} and \eqref{Ym} with the aid of the representations above, we get the desired results \eqref{w_0_E} and \eqref{w_m_E} as probabilistic representations of the PDEs \eqref{PIDE_0} and \eqref{PIDE_m}.
\end{proof}

\begin{proof}[Proof of Theorem \ref{thm1}]
As in the numerical experiments of Section \ref{section numerical illustrations}, we focus on the interval $[0,T]$ and the approximation of $Y_0^{0,{\bf x}}=u(0,{\bf x})$ in accordance with \eqref{nFK_jump} (that is, the solution at time $0$).
To ease the notation, we write $\widetilde{f}(t,{\bf x},{\bf y},{\bf z},\gamma):=f(t,{\bf x},{\bf y},{\bf z},\gamma)+\gamma$ and reserve $C$ for a positive constant which changes its value from line to line. 

First, we restrict ourselves to the case where the terminal time is bounded by a suitable $\delta \in (0,1]$, that is, $T<\delta$.
To estimate $|(u-w_m)(t,{\bf x})|$, observe that
\begin{align*}
&(u-w_m)(t,{\bf x})\\
&=\mathbb{E}\bigg[\int_t^T \bigg(\widetilde{f}(s,\widetilde{X}^{t,{\bf x}}_s,u(s,\widetilde{X}^{t,{\bf x}}_s),(\nabla u \sigma)(s,\widetilde{X}^{t,{\bf x}}_s),
\gamma(s,\widetilde{X}_s^{t,{\bf x}};u)
)\\
&-\widetilde{f}(s,\widetilde{X}^{t,{\bf x}}_s,w_{m-1}(s,\widetilde{X}^{t,{\bf x}}_s),(\nabla w_{m-1} \sigma)(s,\widetilde{X}^{t,{\bf x}}_s),\int_{\mathbb{R}^{d_{\bf z}}_0}(w_{m-1}(s,\widetilde{X}^{t,{\bf x}}_s+h(s,\widetilde{X}^{t,{\bf x}}_s,z))-w_{m-1}(s,\widetilde{X}^{t,{\bf x}}_s))\lambda(s,\widetilde{X}^{t,{\bf x}}_s)v(d{\bf z}))\bigg)ds\bigg],
\end{align*}
in accordance with the probabilistic representations \eqref{u_E} and \eqref{w_m_E}.
Then, the Lipschitz continuity of the driver $f$ leads to
\begin{align*}
|(u-w_m)(t,{\bf x})|
&\leq C_{\rm Lip}[f]\mathbb{E}\bigg[\int_t^T \bigg(|(u-w_{m-1})(s,\widetilde{X}^{t,{\bf x}}_s)|+|(\nabla (u-w_{m-1}) \sigma)(s,\widetilde{X}^{t,{\bf x}}_s)|\\
&\qquad +\int_{\mathbb{R}^{d_{\bf z}}_0}(|(u-w_{m-1})(s,\widetilde{X}^{t,{\bf x}}_s+h(s,\widetilde{X}^{t,{\bf x}}_s,{\bf z}))|+|(u-w_{m-1})(s,\widetilde{X}^{t,{\bf x}}_s)|)\lambda(s,\widetilde{X}^{t,{\bf x}}_s)v(d{\bf z})\bigg)ds\bigg]\\
&\leq C_{\rm Lip}[f]T(1+2\eta) \sup_{(t,{\bf x})\in [0,T]\times \mathbb{R}^{d_{\bf x}}}|(u-w_{m-1})(t,{\bf x})|+C_{\rm Lip}[f]T\sup_{(t,{\bf x})\in [0,T]\times \mathbb{R}^{d_{\bf x}}}|(\nabla (u-w_{m-1})\sigma)(t,{\bf x})|,
\end{align*}
where we have used the upper bound $\sup_{(t,{\bf x})\in [0,T]\times \mathbb{R}^{d_{\bf x}}}|\lambda(t,{\bf x})|\leq \eta$ for the second inequality.
Taking supremum of both sides with respect to time and spatial variables, we get 
\begin{align}
&\sup_{(t,{\bf x})\in [0,T]\times\mathbb{R}^{d_{\bf x}}}|(u-w_m)(t,{\bf x})|\nonumber\\
&\qquad \leq C_{\rm Lip}[f]T(1+2\eta) \sup_{(t,{\bf x})\in [0,T]\times \mathbb{R}^{d_{\bf x}}}|(u-w_{m-1})(t,{\bf x})|+C_{\rm Lip}[f]T\sup_{(t,{\bf x})\in [0,T]\times \mathbb{R}^{d_{\bf x}}}|(\nabla (u-w_{m-1})\sigma)(t,{\bf x})|\nonumber\\
&\qquad \leq C_{\rm Lip}[f]T(1+2\eta) \sup_{(t,{\bf x})\in [0,T]\times \mathbb{R}^{d_{\bf x}}}|(u-w_{m-1})(t,{\bf x})|+CC_{\rm Lip}[f]T\sup_{(t,{\bf x})\in [0,T]\times \mathbb{R}^{d_{\bf x}}}|\nabla (u-w_{m-1})(t,{\bf x})|,\label{lh_u_sup}
\end{align}
for some positive $C$.
To estimate the second term, we use the following representation due to \cite[Theorem 4.2]{Mall_rep}: 
\begin{multline*}
\nabla(u-w_m)(t,{\bf x})
=\mathbb{E}\bigg[\int_t^T \bigg(\widetilde{f}(s,\widetilde{X}^{t,{\bf x}}_s,u(s,\widetilde{X}^{t,{\bf x}}_s),(\nabla u \sigma)(s,\widetilde{X}^{t,{\bf x}}_s),
\gamma(s,\widetilde{X}^{t,{\bf x}}_s;u)
)\\
-\widetilde{f}(s,\widetilde{X}^{t,{\bf x}}_s,w_{m-1}(s,\widetilde{X}^{t,{\bf x}}_s),(\nabla w_{m-1} \sigma)(s,\widetilde{X}^{t,{\bf x}}_s),
\gamma(s,\widetilde{X}^{t,{\bf x}}_s;w_{m-1})
)\bigg)N_{s,t}^{t,{\bf x}}ds\bigg],
\end{multline*}
with the matrices $J_{t\rightarrow s}^{t,{\bf x}}:=\nabla_{\bf x}^{\top}\widetilde{X}^{t,{\bf x}}_s$ and $N_{s,r}^{t,{\bf x}}:=(s-r)^{-1}(\int_t^s (\sigma^{-1}(v,\widetilde{X}_v^{t,{\bf x}}))^{\top} J_{t\rightarrow v}^{t,{\bf x}}dB_v)^{\top}(J_{t\rightarrow r}^{t,{\bf x}})^{-1}$ for $0\leq t\leq s\leq T$,
with 
\begin{align}
\mathbb{E}\left[\left|N_{s,r}^{t,{\bf x}}\right|^{2}\right]\leq \frac{C}{s-r}.\label{N_l2_est}
\end{align}
for some positive $C$.
By the Cauchy-Schwartz inequality, it holds that
\begin{align*}
|\nabla(u-w_m)(t,{\bf x})|
&\leq \int_t^T \mathbb{E}\bigg[\bigg|\widetilde{f}(s,\widetilde{X}^{t,{\bf x}}_s,u(s,\widetilde{X}^{t,{\bf x}}_s),(\nabla u \sigma)(s,\widetilde{X}^{t,{\bf x}}_s),
\gamma(s,\widetilde{X}^{t,{\bf x}}_s;u)
)\\
&\qquad \qquad -\widetilde{f}(s,\widetilde{X}^{t,{\bf x}}_s,w_{m-1}(s,\widetilde{X}^{t,{\bf x}}_s),(\nabla w_{m-1}\sigma)(s,\widetilde{X}^{t,{\bf x}}_s),
\gamma(s,\widetilde{X}^{t,{\bf x}}_s;w_{m-1})
)\bigg|^2\bigg]^{1/2}\mathbb{E}\left[\left|N_{s,t}^{t,{\bf x}}\right|^2\right]^{1/2}ds\\
&\leq CC_{\rm Lip}[f]\int_t^T \mathbb{E}\bigg[\bigg(|(u-w_{m-1})(s,\widetilde{X}^{t,{\bf x}}_s)|+|(\nabla (u-w_{m-1})\sigma)(s,\widetilde{X}^{t,{\bf x}}_s)|\nonumber\\
&\quad +\int_{\mathbb{R}^{d_{\bf z}}_0}\left(|(u-w_{m-1})(s,\widetilde{X}^{t,{\bf x}}_s+h(s,\widetilde{X}^{t,{\bf x}}_s,z))|+|(u-w_{m-1})(s,\widetilde{X}^{t,{\bf x}}_s)|\right)\lambda(s,\widetilde{X}^{t,{\bf x}}_s)v(d{\bf z})\bigg)^2\bigg]^{1/2}(s-t)^{-1/2}ds\nonumber\\
&\leq 2CC_{\rm Lip}[f]\sqrt{T-t}\left((1+2\eta) \sup_{(t,{\bf x})\in [0,T]\times \mathbb{R}^{d_{\bf x}}}|(u-w_{m-1})(t,{\bf x})|+\sup_{(t,{\bf x})\in [0,T]\times \mathbb{R}^{d_{\bf x}}}|(\nabla (u-w_{m-1})\sigma)(t,{\bf x})|\right).
\end{align*}
where the second inequality holds by the Lipschitz continuity of the driver $f$ and the estimate \eqref{N_l2_est}.
Thus, by taking supremum of both sides with respect to time and spatial variables, it holds that
\begin{multline}
\sup_{(t,{\bf x})\in [0,T]\times\mathbb{R}^{d_{\bf x}}}\left|\nabla(u-w_m)(t,{\bf x})]\right|\\
\leq  CC_{\rm Lip}[f]\sqrt{T}(1+2\eta) \sup_{(t,{\bf x})\in [0,T]\times \mathbb{R}^{d_{\bf x}}}|(u-w_{m-1})(t,{\bf x})|+CC_{\rm Lip}[f]\sqrt{T}\sup_{(t,{\bf x})\in [0,T]\times \mathbb{R}^{d_{\bf x}}}|\nabla (u-w_{m-1})(t,{\bf x})|.\label{lh_du_sup}
\end{multline}
Therefore, combining \eqref{lh_u_sup} and \eqref{lh_du_sup} yields
\begin{multline*}
\sup_{{\bf x}\in\mathbb{R}^{d_{\bf x}}}|(u-w_m)(0,{\bf x})|+\sqrt{T}\sup_{{\bf x}\in\mathbb{R}^{d_{\bf x}}}|\nabla(u-w_m)(0,{\bf x})|\\
\leq  CC_{\rm Lip}[f](1+2\eta)T\left[\sup_{(t,{\bf x})\in [0,T]\times \mathbb{R}^{d_{\bf x}}}|(u-w_{m-1})(t,{\bf x})|+\sup_{(t,{\bf x})\in [0,T]\times \mathbb{R}^{d_{\bf x}}}|\nabla (u-w_{m-1})(t,{\bf x})|\right].
\end{multline*}
Due to $T<\sqrt{T}$ for $T\leq \delta\leq 1$, we have
\begin{multline*}
\sup_{(t,{\bf x})\in [0,T]\times \mathbb{R}^{d_{\bf x}}}|(u-w_m)(t,{\bf x})|+\sqrt{T}\sup_{(t,{\bf x})\in [0,T]\times \mathbb{R}^{d_{\bf x}}}|\nabla(u-w_m)(t,{\bf x})|\\
\leq  CC_{\rm Lip}[f](1+2\eta)\sqrt{T}\left[\sup_{(t,{\bf x})\in [0,T]\times \mathbb{R}^{d_{\bf x}}}|(u-w_{m-1})(t,{\bf x})|+\sqrt{T}\sup_{(t,{\bf x})\in [0,T]\times \mathbb{R}^{d_{\bf x}}}|\nabla (u-w_{m-1})(t,{\bf x})|\right].
\end{multline*}

From this point on, we handle both cases of $\eta>0$ (with jumps) and $\eta=0$ (without jumps) altogether (as discussed in Remark \ref{remark bender denk 2}).
To ease the notation, we write $\mathbbm{1}(\eta=0)$ for the indicator function, which returns $1$ if $\eta=0$ and $0$ otherwise. 
For every $T$ satisfying $T\in (0,(\mathbbm{1}(\eta=0)+1\land\eta^2)/(4(1+2\eta)^2C^2C^2_{\rm Lip}[f]))$, we get
\begin{align*}
&\sup_{(t,{\bf x})\in [0,T]\times \mathbb{R}^{d_{\bf x}}}|(u-w_m)(t,{\bf x})|+\sqrt{T}\sup_{(t,{\bf x})\in [0,T]\times \mathbb{R}^{d_{\bf x}}}|\nabla(u-w_m)(t,{\bf x})|\nonumber\\
&\qquad \qquad \leq 2^{-1}(\mathbbm{1}(\eta=0)+1\land\eta)\left[\sup_{(t,{\bf x})\in [0,T]\times \mathbb{R}^{d_{\bf x}}}|(u-w_{m-1})(t,{\bf x})|+\sqrt{T}\sup_{(t,{\bf x})\in [0,T]\times \mathbb{R}^{d_{\bf x}}}|\nabla (u-w_{m-1})(t,{\bf x})|\right]\\
&\qquad \qquad \qquad \qquad\vdots\nonumber\\
&\qquad \qquad \leq 2^{-m}(\mathbbm{1}(\eta=0)+1\land\eta^m)\left[\sup_{(t,{\bf x})\in [0,T]\times \mathbb{R}^{d_{\bf x}}}|(u-w_{0})(t,{\bf x})|+\sqrt{T}\sup_{(t,{\bf x})\in [0,T]\times \mathbb{R}^{d_{\bf x}}}|\nabla (u-w_{m-1})(t,{\bf x})|\right],
\end{align*}
by recursion.
Since the functions $u$, $\nabla u$, $w_0$ and $\nabla w_0$ are bounded due to Assumption \ref{standing assumption}, it holds that
\begin{equation}\label{final result of first step}
\sup_{(t,{\bf x})\in [0,T]\times \mathbb{R}^{d_{\bf x}}}|(u-w_m)(t,{\bf x})|
\leq C2^{-m}(\mathbbm{1}(\eta=0)+1\land\eta^m).
\end{equation}

It remains to extend the result above to a general terminal time $T>0$.
To this end, set a partition $0=t_0<t_1<\cdots<t_n=T$ of the interval $[0,T]$ with a sufficiently large $n$ so that $t_{k+1}-t_k\leq \delta$ for all $k\in \{0,1,\cdots,n-1\}$, along which we rewrite the solution of the PIDE \eqref{PIDE} as $u(t_n,\widetilde{X}_{t_n}^{0,{\bf x}})=\Phi(\widetilde{X}_{t_n}^{0,{\bf x}})$ and then, backwards for $k\in \{n-1,\cdots,0\}$,
\begin{align*}
&u(r,\widetilde{X}_r^{t_k,{\bf x}})
=u(t_{k+1},\widetilde{X}_{t_{k+1}}^{t_k,{\bf x}})\\
&\quad +\int_r^{t_{k+1}}\widetilde{f}(s,\widetilde{X}_s^{t_k,{\bf x}},u(s,\widetilde{X}_s^{t_k,{\bf x}}),(\nabla u \sigma)(s,\widetilde{X}_s^{t_k,{\bf x}}),\int_{\mathbb{R}^{d_{\bf z}}_0}(u(s,\widetilde{X}_s^{t_k,{\bf x}}+h(s,\widetilde{X}_s^{t_k,{\bf x}},{\bf z}))-u(s,\widetilde{X}_s^{t_k,{\bf x}}))\lambda(s,\widetilde{X}^{t_k,{\bf x}}_s)v(d{\bf z}))ds\nonumber\\
&\quad +\int_r^{t_{k+1}}\sigma\nabla u(s,\widetilde{X}_s^{t_k,{\bf x}})dB_s,\quad r\in[t_k,t_{k+1}),
\end{align*}
and, in a similar manner, rewrite the solution of the PDE \eqref{PIDE_m} as $w_m(t_n,\widetilde{X}_{t_n}^{0,{\bf x}})=\Phi(\widetilde{X}_{t_n}^{0,{\bf x}})$ and then, backwards for $k\in \{n-1,\cdots,0\}$,
\begin{align*}
&w_m(r,\widetilde{X}_r^{t_k,{\bf x}})=w_m(t_{k+1},\widetilde{X}_{t_{k+1}}^{t_k,{\bf x}})\\
&\quad +\int_r^{t_{k+1}}\widetilde{f}(s,\widetilde{X}_s^{t_k,{\bf x}},w_{m-1}(s,\widetilde{X}_s^{t_k,{\bf x}}),(\nabla w_{m-1} \sigma)(s,\widetilde{X}_s^{t_k,{\bf x}}),\int_{\mathbb{R}^{d_{\bf z}}_0}(w_{m-1}(s,\widetilde{X}_s^{t_k,{\bf x}}+h(s,\widetilde{X}_s^{t_k,{\bf x}},z))-w_{m-1}(s,\widetilde{X}_s^{t_k,{\bf x}}))\lambda(s,\widetilde{X}^{t_k,{\bf x}}_s)v(d{\bf z}))ds\nonumber\\
&\quad +\int_r^{t_{k+1}}\sigma\nabla w_{m-1}(s,\widetilde{X}_s^{t_k,{\bf x}})dB_s,\ \ \ \  r\in[t_k,t_{k+1}),
\end{align*}
recursively for $m\in \mathbb{N}$.
Since $t_{k+1}-t_k\leq \delta$ for all $k\in \{0,1,\cdots,n-1\}$, it holds by \eqref{final result of first step} that 
\[
  \sup_{(t,{\bf x})\in [t_k,t_{k+1}]\times \mathbb{R}^{d_{\bf x}}}|(u-w_m)(t,{\bf x})|\leq C2^{-m}(\mathbbm{1}(\eta=0)+1\land\eta^m),\quad k\in \{0,1,\cdots,n-1\},
\]
which clearly yields the desired result.
\end{proof}

\small
\bibliographystyle{abbrv}
\bibliography{bsde.bib}

\newpage
\setcounter{page}{1}
\begin{center}
{\bf \Large Supplementary materials: A forward scheme with machine learning for forward-backward SDEs with jumps by decoupling jumps}
\end{center}


For convenience in coding, we provide the pseudocode for the two machine learning methods that we have developed in Section \ref{section forward scheme}.
We first describe the the least squares Monte Carlo method (Section \ref{subsection LSMC}), where $M$ and $J$ denote the numbers of iid paths and basis functions, respectively.

\begin{algorithm}[H]
\footnotesize
\caption{The least squares Monte Carlo method (Section \ref{subsection LSMC})}\label{Algo_LSMC}
\begin{algorithmic}[1]
\Require number of paths $M\in\mathbb{N}$, number of basis functions $J\in\mathbb{N}$, basis functions $v_j^{\mathrm{basis}}:\mathbb{R}^{d_{\bf x}}\rightarrow\mathbb{R}$ for $j\in\{1,\cdots,J\}$, $\{\beta_{p,k,j}^{{\cal U}}\}_{j\in \{1,\cdots,J\}}\subset\mathbb{R}^{d_{\bf y}}$, and $\{\beta_{p,k,j}^{{\cal V}}\}_{j\in \{1,\cdots,J\}}\subset\mathbb{R}^{d_{\bf y}\times d_0}$ for $k\in\{1,\cdots,n-1\}$ and $p\in\{0,1,\cdots,m\}$
\State Simulate $\Delta B_{t_{k+1}}^\ell:=B_{t_{k+1}}^\ell-B_{t_k}^\ell\sim\mathcal{N}(0_{d_0},(T/n)\mathbb{I}_{d_0})$ and $\overline{X}_{t_k}^{(n),\ell}$ for $k\in \{0,1,\cdots,n\}$ and $\ell\in \{1,\cdots,M\}$
\State Compute ${\cal U}_{p,n}^{\mathrm{LS}}(\overline{X}_{t_n}^{(n),\ell}) = \Phi(\overline{X}_{t_{n}}^{(n),\ell})$ and ${\cal V}_{p,n}^{{\mathrm{LS}}}(\overline{X}_{t_n}^{(n),\ell}) = (\nabla\Phi\sigma)(t_n,\overline{X}_{t_{n}}^{(n),\ell})$ for $\ell\in \{1,\cdots,M\}$ and $p\in \{0,1,\cdots,m\}$
\For{$k=n-1$ to $1$}
\State Solve the following two minimization problems:
\State $\{\beta_{0,k,j}^{{\cal U}}\}_{j\in \{1,\cdots,J\}}\in \argmin_{\{\beta_{j}\}_{j\in \{1,\cdots,J\}}\subset\mathbb{R}^{d_{\bf y}}}M^{-1}\sum_{\ell=1}^M|\sum_{j=1}^J\beta_jv_j^{\mathrm{basis}}(\overline{X}_{t_k}^{(n),\ell})- \Phi(\overline{X}_{t_{n}}^{(n),\ell}) |^2$
\State $\{\beta_{0,k,j}^{{\cal V}}\}_{j\in \{1,\cdots,J\}}\in \argmin_{\{\beta_{j}\}_{j\in \{1,\cdots,J\}}\subset\mathbb{R}^{d_{\bf y}\times d_0}}M^{-1}\sum_{\ell=1}^M|\sum_{j=1}^J\beta_jv_j^{\mathrm{basis}}(\overline{X}_{t_k}^{(n),\ell})-\Phi(\overline{X}_{t_{n}}^{(n),\ell}) (\Delta B_{t_{k+1}}^\ell)^{\top}/\Delta_{n}|^2$
\State Compute ${\cal U}_{0,k}^{\mathrm{LS}}(\overline{X}_{t_k}^{(n),\ell})=\sum_{j=1}^J\beta_{0,k,j}^{{\cal U}}v_j^{\mathrm{basis}}(\overline{X}_{t_k}^{(n),\ell})$ and ${\cal V}_{0,k}^{\mathrm{LS}}(\overline{X}_{t_k}^{(n),\ell})=\sum_{j=1}^J\beta_{0,k,j}^{{\cal V}}v_j^{\mathrm{basis}}(\overline{X}_{t_k}^{(n),\ell})$
\EndFor
\State Compute ${\cal U}_{0,0}^{\mathrm{LS}}({\bf x})=M^{-1}\sum_{\ell=1}^M\Phi(\overline{X}_{t_{n}}^{(n),\ell})$
\For{$p=1$ to $m$}
\For{$k=n-1$ to $1$}
\State Solve the following two minimization problems:
\State $\{\beta_{p,k,j}^{{\cal U}}\}_{j\in \{1,\cdots,J\}}\in \argmin_{\{\beta_{j}\}_{j\in \{1,\cdots,J\}}\subset\mathbb{R}^{d_{\bf y}}}M^{-1}\sum_{\ell=1}^M|\sum_{j=1}^J\beta_jv_j^{\mathrm{basis}}(\overline{X}_{t_k}^{(n),\ell})$\\
\hspace{4em}$-(\Phi(\overline{X}_{t_{n}}^{(n),\ell})+\sum_{i=k+1}^{n}\Delta_{n}f({\cal U}_{p-1,i}^{\mathrm{LS}}(\overline{X}_{t_{i}}^{(n),\ell}),{\cal V}_{p-1,i}^{\mathrm{LS}}(\overline{X}_{t_{i}}^{(n),\ell}),\gamma(t_{i},\overline{X}_{t_{i}}^{(n),\ell};{\cal U}_{p-1,i}^{\mathrm{LS}}))+\sum_{i=k+1}^{n-1}\Delta_{n}\gamma(t_{i},\overline{X}_{t_{i}}^{(n),\ell};{\cal U}_{p-1,i}^{\mathrm{LS}})
)|^2$
\State $\{\beta_{p,k,j}^{{\cal V}}\}_{j\in \{1,\cdots,J\}}\in \argmin_{\{\beta_{j}\}_{j\in \{1,\cdots,J\}}\subset\mathbb{R}^{d_{\bf y}\times d_0}}M^{-1}\sum_{\ell=1}^M|\sum_{j=1}^J\beta_jv_j^{\mathrm{basis}}(\overline{X}_{t_k}^{(n),\ell})$\\
\hspace{4em}$-(\Phi(\overline{X}_{t_{n}}^{(n),\ell})+\sum_{i=k+1}^{n}\Delta_{n}f({\cal U}_{p-1,i}^{\mathrm{LS}}(\overline{X}_{t_{i}}^{(n),\ell}),{\cal V}_{p-1,i}^{\mathrm{LS}}(\overline{X}_{t_{i}}^{(n),\ell}),\gamma(t_{i},\overline{X}_{t_{i}}^{(n),\ell};{\cal U}_{p-1,i}^{\mathrm{LS}}))+\sum_{i=k+1}^{n}\Delta_{n}\gamma(t_{i},\overline{X}_{t_{i}}^{(n),\ell};{\cal U}_{p-1,i}^{\mathrm{LS}})
)(\Delta B_{t_{k+1}}^\ell)^{\top}/\Delta_{n}|^2$
\State Compute ${\cal U}_{p,k}^{\mathrm{LS}}(\overline{X}_{t_k}^{(n),\ell})=\sum_{j=1}^J\beta_{p,k,j}^{{\cal U}}v_j^{\mathrm{basis}}(\overline{X}_{t_k}^{(n),\ell})$ and ${\cal V}_{p,k}^{\mathrm{LS}}(\overline{X}_{t_k}^{(n),\ell})=\sum_{j=1}^J\beta_{p,k,j}^{{\cal V}}v_j^{\mathrm{basis}}(\overline{X}_{t_k}^{(n),\ell})$
\EndFor
\State Compute ${\cal U}_{p,0}^{\mathrm{LS}}({\bf x})=M^{-1}\sum_{\ell=1}^M( \Phi(\overline{X}_{t_{n}}^{(n),\ell}) +\sum_{i=1}^{n}\Delta_{n}f({\cal U}_{p-1,i}^{\mathrm{LS}}(\overline{X}_{t_{i}}^{(n),\ell}),{\cal V}_{p-1,i}^{\mathrm{LS}}(\overline{X}_{t_{i}}^{(n),\ell}),\gamma(t_{i},\overline{X}_{t_{i}}^{(n),\ell};{\cal U}_{p-1,i}^{\mathrm{LS}}))+\sum_{i=1}^{n}\Delta_{n}\gamma(t_{i},\overline{X}_{t_{i}}^{(n),\ell};{\cal U}_{p-1,i}^{\mathrm{LS}}))$
\EndFor
\State Return ${\cal U}_{m,0}^{\mathrm{LS}}({\bf x})$ as an approximation of $w_m(0,{\bf x})$
\end{algorithmic}
\end{algorithm}

\newpage
Next, we provide the pseudocode for the neural network-based scheme (Section \ref{subsection neural network}).
Note that here, $M$ and $J$ denote the batch size and number of train-steps, respectively, unlike in the least squares Monte Carlo method (Algorithm \ref{Algo_LSMC}).

\begin{algorithm}[H]
\footnotesize
\caption{The neural network-based scheme (Section \ref{subsection neural network})}\label{Algo_NN}
\begin{algorithmic}[1]
\Require{batch size $M\in\mathbb{N}$, train steps $J\in\mathbb{N}$, learning rates $\iota(j)$, $\nu\in\mathbb{N}$, and $\Theta_{p,k}^{{\cal U},j},\Theta_{p,k}^{{\cal V},j}\in\mathbb{R}^{\nu}$ for $p\in \{0,1,\cdots,m\}$ and $j\in \{0,1,\cdots,J\}$}
\State Generate $\Delta B_{t_{i+1}}^{\ell,j}:=B_{t_{i+1}}^{\ell,j}-B_{t_i}^{\ell,j}\sim\mathcal{N}(0_{d_0},(T/n)\mathbb{I}_{d_0})$ and $\overline{X}_{t_{i+1}}^{(n),\ell,j}$ for $i\in \{0,1,\cdots,n-1\}$, $\ell\in \{1,\cdots,M\}$ and $j\in\{0,1,\ldots,J-1\}$
\State Compute ${\cal U}_{p,n}^{\mathrm{NN}}(\overline{X}_{t_n}^{(n),\ell,j}) = \Phi(\overline{X}_{t_{n}}^{(n),\ell,j})$, ${\cal V}_{p,n}^{{\mathrm{NN}}}(\overline{X}_{t_n}^{(n),\ell,j}) = (\nabla\Phi\sigma)(t_n,\overline{X}_{t_{n}}^{(n),\ell,j})$, $p\in\{0,1,\ldots,m\}$, $\ell\in\{1,\ldots,M\}$ and $j\in\{0,1,\ldots,J-1\}$
\For{$k=n-1$ to $0$}
\For{$j=0$ to $J-1$}
\State Compute the loss functions: \\
\hspace{2em} $L_j^{{\cal U}}(\Theta_{0,k}^{{\cal U},j})=M^{-1}\sum_{\ell=1}^M|{\rm NN}^{{\cal U}}(\overline{X}_{t_k}^{(n),\ell,j};\Theta_{0,k}^{{\cal U},j})- \Phi(\overline{X}_{t_{n}}^{(n),\ell,j}) |^2$\\
\hspace{2em} $L_j^{{\cal V}}(\Theta_{0,k}^{{\cal V},j})=M^{-1}\sum_{\ell=1}^M|{\rm NN}^{{\cal V}}(\overline{X}_{t_k}^{(n),\ell,j};\Theta_{0,k}^{{\cal V},j})- \Phi(\overline{X}_{t_{n}}^{(n),\ell,j}) (\Delta B_{t_{k+1}}^{\ell,j})^{\top}/\Delta_{n}|^2$
\State Update $\Theta_{0,k}^{{\cal U},j}\rightarrow \Theta_{0,k}^{{\cal U},j+1}$ and $\Theta_{0,k}^{{\cal V},j}\rightarrow \Theta_{0,k}^{{\cal V},j+1}$ by Adam optimizer with the learning rate $\iota(j)$ to minimize $L_j^{{\cal U}}(\Theta_{0,k}^{{\cal U},j})+L_j^{{\cal V}}(\Theta_{0,k}^{{\cal V},j})$
\EndFor
\State Set ${\cal U}_{0,k}^{\mathrm{NN}}={\rm NN}^{{\cal U}}(\cdot ;\Theta_{0,k}^{{\cal U},J})$ and ${\cal V}_{0,k}^{\mathrm{NN}}={\rm NN}^{{\cal V}}(\cdot\ ;\Theta_{0,k}^{{\cal V},J})$
\EndFor
\For{$p=1$ to $m$}
\For{$j=0$ to $J-1$}
\For{$k=n-1$ to $0$}
\State Compute the loss functions:\\
\hspace{5em}$L_j^{{\cal U}}(\Theta_{p,k}^{{\cal U},j})=M^{-1}\sum_{\ell=1}^M|{\rm NN}^{{\cal U}}(\overline{X}_{t_k}^{(n),\ell,j};\Theta_{p,k}^{{\cal U},j})$\\
\hspace{6em}$-(\Phi(\overline{X}_{t_{n}}^{(n),\ell,j})+\sum_{i=k+1}^{n}\Delta_{n}f({\cal U}_{p-1,i}^{\mathrm{NN}}(\overline{X}_{t_{i}}^{(n),\ell,j}),{\cal V}_{p-1,i}^{\mathrm{NN}}(\overline{X}_{t_{i}}^{(n),\ell,j}),\gamma(t_{i},\overline{X}_{t_{i}}^{(n),\ell,j};{\cal U}_{p-1,i}^{\mathrm{NN}}))+\sum_{i=k+1}^{n}\Delta_{n}\gamma(t_{i},\overline{X}_{t_{i}}^{(n),\ell,j};{\cal U}_{p-1,i}^{\mathrm{NN}}))|^2$\\
\hspace{5em}$L_j^{{\cal V}}(\Theta_{p,k}^{{\cal V},j})=M^{-1}\sum_{\ell=1}^M|{\rm NN}^{{\cal V}}(\overline{X}_{t_k}^{(n),\ell,j};\Theta_{p,k}^{{\cal V},j})$\\
\hspace{6em}$-(\Phi(\overline{X}_{t_{n}}^{(n),\ell,j})+\sum_{i=k+1}^{n}\Delta_{n}f({\cal U}_{p-1,i}^{\mathrm{NN}}(\overline{X}_{t_{i}}^{(n),\ell,j}),{\cal V}_{p-1,i}^{\mathrm{NN}}(\overline{X}_{t_{i}}^{(n),\ell,j}),\gamma(t_{i},\overline{X}_{t_{i}}^{(n),\ell,j};{\cal U}_{p-1,i}^{\mathrm{NN}}))+\sum_{i=k+1}^{n}\Delta_{n}\gamma(t_{i},\overline{X}_{t_{i}}^{(n),\ell,j};{\cal U}_{p-1,i}^{\mathrm{NN}}))(\Delta B_{t_{k+1}}^{\ell,j})^{\top}/\Delta_{n}|^2$
\State Update $\Theta_{p,k}^{{\cal U},j}\rightarrow \Theta_{p,k}^{{\cal U},j+1}$ and $\Theta_{p,k}^{{\cal V},j}\rightarrow \Theta_{p,k}^{{\cal V},j+1}$ by Adam optimizer with the learning rate $\iota(j)$ to minimize $L_j^{{\cal U}}(\Theta_{p,k}^{{\cal U},j})+L_j^{{\cal V}}(\Theta_{p,k}^{{\cal V},j})$
\EndFor
\State Set ${\cal U}_{p,k}^{\mathrm{NN}}={\rm NN}^{{\cal U}}(\cdot\ ;\Theta_{p,k}^{{\cal U},J})$ and ${\cal V}_{p,k}^{\mathrm{NN}}={\rm NN}^{{\cal V}}(\cdot\ ;\Theta_{p,k}^{{\cal V},J})$
\EndFor
\EndFor
\State Return ${\cal U}_{m,0}^{\mathrm{NN}}({\bf x})$ as an approximation of $w_m(0,{\bf x})$
\end{algorithmic}
\end{algorithm}

\end{document}